\documentclass[a4paper,12pt]{article}

\textwidth 6in
\textheight 9in
\topmargin  0in
\hoffset -0.15in

\usepackage{latexsym}
\usepackage{amsmath}
\usepackage{amsthm}
\usepackage{amssymb}
\usepackage{enumerate}
\usepackage{multicol}
\usepackage{graphicx}

\allowdisplaybreaks

\pagestyle{plain}

\theoremstyle{plain}
\newtheorem{theorem}{Theorem}[section]

\newtheorem{lemma}[theorem]{Lemma}
\newtheorem{corollary}[theorem]{Corollary}

\theoremstyle{definition}

\newtheorem{example}[theorem]{Example}
\newtheorem{remark}[theorem]{Remark}
\numberwithin{equation}{section}


\def\T{{\mathbb{T}}}
\def\R{{\mathbb{R}}}
\def\N{{\mathbb{N}}}

\def\Z{{\mathbb{Z}}}

\def\cD{{\mathcal{D}}}

\def\<{{\langle}}
\def\>{{\rangle}}

\def\liminf{\mathop{\mathrm{liminf}}}

\def\div{\mathop{\mathrm{div}}\nolimits}

\def\supp{\mathop{\mathrm{supp}}\nolimits}

\def\b0{{\mathbf{0}}}

\def\bef{{\mathbf{f}}}
\def\bg{{\mathbf{g}}}
\def\bG{{\mathbf{G}}}

\def\bn{{\mathbf{n}}}
\def\bx{{\mathbf{x}}}
\def\by{{\mathbf{y}}}
\def\bz{{\mathbf{z}}}
\def\bu{{\mathbf{u}}}
\def\bv{{\mathbf{v}}}
\def\bw{{\mathbf{w}}}

\def\bs{{\mathbf{s}}}

\def\bh{{\mathbf{h}}}

\def\wf{{\widetilde{f}}}

\def\bphi{{\mbox{\boldmath$\phi$}}}

\def\bpsi{{\mbox{\boldmath$\psi$}}}

\def\O{{\Omega}}
\def\Op{{\Omega_{\mathrm{per}}}}
\def\o{{\omega}}

\def\eps{{\varepsilon}}
\def\g{{\gamma}}
\def\s{{\sigma}}

\def\Dp{{\Delta_{\mathrm{per}}}}
\def\Dd{{\Delta_{\mathrm{D}}}}
\def\Qd{{Q_{\mathrm{D}}}}
\def\Rd{{R_{\mathrm{D}}}}
\begin{document}

\title{Characterization of Subdifferentials of a Singular Convex
Functional in Sobolev Spaces of Order Minus One}
\author{Yohei Kashima \medskip \\
Department of Mathematical Sciences, University of Tokyo\\
Komaba, Tokyo, 153-8914, Japan\\
kashima@ms.u-tokyo.ac.jp}
\date{}

\maketitle

\begin{abstract}
Subdifferentials of a singular convex functional 
 representing the surface free energy of a crystal under the roughening
 temperature are characterized. The energy functional is defined on
 Sobolev spaces of order $-1$, so the subdifferential mathematically
 formulates the energy's gradient which formally involves 4th order 
spacial derivatives of the surface's height. The subdifferentials are
 analyzed in the negative Sobolev spaces of arbitrary spacial dimension 
on which both a periodic
 boundary condition and a Dirichlet boundary condition are separately
 imposed. Based on the characterization theorem of subdifferentials, the smallest element
 contained in the subdifferential of the energy for a spherically symmetric surface is
 calculated under the Dirichlet boundary condition.\\ \\
\noindent
\textit{Keywords:}{ subdifferential; negative Sobolev space; singular
 functional; 4th order PDE.}\\ \\
\noindent
Mathematics Subject Classification 2010: 47J30, 35G20, 35R70
\end{abstract}

\section{Introduction}

When a time evolution problem has a structure of gradient flow and its
governing energy functional has good properties such as convexity and
lower semi-continuity, the evolution problem can be formulated into a
well-posed initial value problem whose right hand side is given by 
subdifferential of the energy functional. An advantage of the
subdifferential formulation is that smoothness of the energy functional
is not required, enabling us to handle a large class of physical models,
which only have a formal meaning at most, within mathematical
context. However, this mathematical formulation might look too abstract
to extract physical insights which the model is initially expected to
present. The abstract appearance is mainly due to the multi-valued
nature of subdifferential. In this formulation the time
derivative of unknown is not described by an equality, but is only contained
in a set of possible gradients of the energy at the time. This ambiguity
motivates us to characterize the subdifferential of the singular
functional explicitly so that one can interpret the abstract evolution
problem involving subdifferential as a natural formulation
of the original singular model. 

Our intention is especially to give an interpretation to the
subdifferential formulation of the following 4th order equation.
\begin{equation}\label{eq_singular_pde}
\frac{\partial}{\partial t}f=-\Delta \div(|\nabla f|^{-1}\nabla f+\mu
 |\nabla f|^{p-2}\nabla f),\ (\mu>0,p\in(1,\infty)),
\end{equation}
where $f$ is a time-dependent, real-valued function defined on a bounded
domain $\O$ of $\R^d$ obeying an appropriate boundary condition.
 Apparently the
equation \eqref{eq_singular_pde} loses a mathematical meaning when
$\nabla f=\b0$. However, if we put the mathematical rigor aside
temporarily, we can go on to rewrite the equation
\eqref{eq_singular_pde} symbolically into a gradient flow equation
\begin{equation}\label{eq_singular_pde_derivative}
\frac{\partial}{\partial t}f=-\frac{\delta F(f)}{\delta f}
\end{equation}
governed by the energy functional
\begin{equation}\label{eq_singular_energy}
F(f)=\int_{\O}\left(|\nabla f(\bx)|+\frac{\mu}{p}|\nabla
		f(\bx)|^p\right)d\bx.
\end{equation}
Here the functional derivative of $F$ is taken with respect to the
metric of the space $H^{-1}(\O)$ so that
$$\frac{\delta F(f)}{\delta f}=\Delta \div(|\nabla f|^{-1}\nabla f+\mu
 |\nabla f|^{p-2}\nabla f).
$$
 Recall that if we choose a Dirichlet boundary condition for instance, 
$H^{-1}(\O)$ is defined as the dual space of $H_0^{1}(\O)$. Using the
isometry $-\Delta:H_0^1(\O)\to H^{-1}(\O)$, we can formally regard 
$H^{-1}(\O)$ as a Hilbert space having the inner product
$\int_{\O}(-\Delta)^{-1}f(\bx)\cdot g(\bx)d\bx$ $(f,g\in H^{-1}(\O))$.
The function spaces will be defined later in this section in more
rigorous context.

The idea of the subdifferential formulation is simply to replace the
formal functional derivative by the subdifferential of $F$. The
formulation of \eqref{eq_singular_pde_derivative} is 
\begin{equation}\label{eq_singular_pde_formulation}
\frac{d}{d t}f\in -\partial F(f).
\end{equation}
We wish to postpone the mathematical definition of subdifferential
until the following subsections. Here let us only note that 
subdifferential is an extended concept of derivative since
its value is no other than the usual derivative if the functional is
differentiable. The strength of the abstract theory guarantees the
unique solvability of the initial value problem of
\eqref{eq_singular_pde_formulation}. In this paper we characterize the
value of $\partial F(f)$ so that we can regain a visible expression like
\eqref{eq_singular_pde} from \eqref{eq_singular_pde_formulation}.

Physically the solution $f$ to the equation \eqref{eq_singular_pde}
models the height of a crystalline surface driven by surface diffusion 
under the roughening temperature. Spohn \cite{S} systematically  derived the
equation \eqref{eq_singular_pde} and formulated it into a free boundary
value problem with evolving facets. Kashima \cite{K} proposed the
subdifferential formulation \eqref{eq_singular_pde_formulation} of the
singular problem \eqref{eq_singular_pde} under the Dirichlet boundary
condition and characterized the
subdifferential of the energy by revising the characterization theorem of
subdifferentials for 2nd order equations by Attouch and Damlamian
\cite{AD}. Odisharia \cite[\mbox{Chapter 3}]{O} derived a free boundary value problem,
which is consistent with Spohn's free boundary formulation \cite{S},
from the subdifferential formulation by Kashima \cite{K}. Odisharia's 
derivation excludes a speculation by Kashima in \cite{K} that the subdifferential formulation
of \eqref{eq_singular_pde} is inconsistent with the free boundary value
problem with facets. Developments on the subject have been
continuing until today. Recently Giga and Kohn \cite{GK} proved that the
solution to the initial value problem of
\eqref{eq_singular_pde_formulation} under the periodic boundary condition
becomes uniformly zero in finite time and obtained an upper bound on the
extinction time independently of the volume of the domain.
Kohn and Versieux \cite{KV} proposed a finite element approximation of
\eqref{eq_singular_pde_formulation} and established an error estimate
between the solution to \eqref{eq_singular_pde_formulation} and the
fully discrete finite element solution. More topics on singular diffusion equations including
\eqref{eq_singular_pde} are found in the article \cite{GG}.

This paper improves the previous results in \cite{K}. The article \cite{K} tried to characterize
$H^{-1}$-subdifferentials of a class of convex functionals including
\eqref{eq_singular_energy} under the Dirichlet boundary
condition in a way parallel to the general $L^2$-theory \cite{AD}.
In this paper by restricting the argument to the functional
\eqref{eq_singular_energy} we construct our proofs in a self-contained
manner using only a few basic facts from convex analysis 
 and characterize its $H^{-1}$-subdifferentials under both the
periodic boundary condition and the Dirichlet boundary condition
separately. The characterization is carried out in arbitrary spacial
dimension, improving the results in \cite{K}, where the dimension is
assumed to be less than equal to $4$. 
In addition to the removal of the dimensional constraint, the
characterized value of the subdifferential seems more natural especially
in the periodic setting as a formulation of \eqref{eq_singular_pde}.
The main task in our proof is to characterize the conjugate functional
of the energy functional and a technical difference from the argument
\cite[\mbox{Subsection 3.3}]{K} lies in this part, too. Though it was
also aimed to simplify the proof of the characterization of the
conjugate functional of \eqref{eq_singular_energy} in
\cite[\mbox{Subsection 3.3}]{K}, its argument needed the Sobolev
embedding theorem and consequently characterized the conjugate
functional under a restrictive assumption on the exponent $p$. In this
paper we complete the characterization of the conjugate functional
for all $p>1$. Remark that this approach is different from the method used
 to characterize $L^2$-subdifferential of total variation in
\cite[\mbox{Chapter 1}]{ACM}, which is based on a fact that the
functional of total variation is positive homogeneous of degree 1.
By applying the characterization theorem we calculate the smallest element
in the subdifferential of the energy functional under the Dirichlet
boundary condition for a spherically symmetric surface in any spacial dimension. The smallest
element is called canonical restriction. Our calculation of the
canonical restriction is seen as an extension of that of 1 dimensional case
 presented in \cite[\mbox{Section 4}]{K} for the Dirichlet problem,
 \cite[\mbox{Chapter 3}]{O} for the periodic problem. The canonical
 restriction is relevant to
 the study of the crystalline motion since the general theory
 (see e.g. \cite{B}) suggests that it actually represents the speed of
 the surface during the time evolution. From the canonical restriction
 we can, therefore, predict how the surface behaves in the next moment,
 which was in fact the strategy of Odisharia \cite[\mbox{Chapter 3}]{O}
 to derive the free boundary value problem.

In the rest of this section we prepare notations, introduce function
spaces, and state the main results concerning the characterization of subdifferentials. In Section \ref{sec_proofs} we give proofs
of the characterization theorems first for the periodic problem, then for the
Dirichlet problem. In Section \ref{sec_canonical_restriction} we
calculate the canonical restriction under the
Dirichlet boundary by assuming a spherical symmetry of the surface.

\subsection{Function spaces with a periodic boundary condition}
Here we introduce notations and function spaces to formulate the
periodic problem. Throughout the paper the number $d(\in\N)$ denotes the
spacial dimension and $p(\in(1,\infty))$ is used to define the exponent of the
spaces of
integrable functions. The notation $\T^d$ stands for a $d$-dimensional
flat torus; $\T^d:=\prod_{i=1}^d(\R/\o_i\Z)$ with $\o_i>0$
$(i=1,2,\cdots,d)$. Set $\Op:=\prod_{i=1}^d(0,\o_i)$ $(\subset \R^d)$.

We consider the following real Banach space of periodic integrable
functions.
\begin{equation*}
L^p(\T^d;\R^m):=\left\{\bef\in L_{\text{loc}}^p(\R^d;\R^m)\ \left|\begin{array}{l}
 \bef(\bx)=\bef(\bx+(m_1\o_1,\cdots,m_d\o_d))\\
 \text{a.e. }\bx\in\R^d,\ \forall (m_1,\cdots,m_d)\in\Z^d
\end{array}\right.
\right\},
\end{equation*}
where $m\in\N$ and 
the notation $\bef\in L_{\text{loc}}^p(\R^d;\R^m)$ means that for
any open bounded set $O$ $(\subset \R^d)$, $\bef|_{O}\in
L^p(O;\R^m)$. The norm of $L^p(\T^d;\R^m)$ is defined by 
$$\|\bef\|_{L^p(\T^d;\R^m)}:=\left(\int_{\Op}|\bef(\bx)|^pd\bx\right)^{1/p}.
$$
Among these spaces $L^2(\T^d;\R^m)$ is a Hilbert space having the inner product
$$\<\bef,\bg\>_{L^2(\T^d;\R^m)}:=\int_{\Op}\<\bef(\bx),\bg(\bx)\>_{\R^m}d\bx.
$$
When $m=1$, let us simply write $L^p(\T^d)$ instead of $L^p(\T^d;\R)$.

The space $L^p_{\text{ave}}(\T^d)$ is a subspace of $L^p(\T^d)$ defined
by
$$L_{\text{ave}}^p(\T^d):=\left\{f\in L^p(\T^d)\ \left|\
\int_{\Op}f(\bx)d\bx =0\right.\right\}.$$

The Sobolev spaces $W^{1,p}(\T^d)$, $W^{1,p}_{\text{ave}}(\T^d)$ are defined
by
\begin{equation*}
\begin{split}
&W^{1,p}(\T^d):=\left\{f\in L^p(\T^d)\ \left|\
					  \nabla f(\in\cD'(\R^d;\R^d))\text{
					  satisfies }\nabla f\in
 L^p(\T^d;\R^d)\right.\right\},\\
&W^{1,p}_{\text{ave}}(\T^d):= W^{1,p}(\T^d)\cap
 L^p_{\text{ave}}(\T^d).
\end{split}
\end{equation*}
We use the notation $H^1_{\text{ave}}(\T^d)$ in place of
$W^{1,2}_{\text{ave}}(\T^d)$.

Poincar\'e's inequality states that there exists a constant $C$ $(>0)$
such that 
$$\|f\|_{L^p(\T^d)}\le C\|\nabla f\|_{L^p(\T^d;\R^d)},\ \forall
f\in W^{1,p}_{\text{ave}}(\T^d).$$ 
This inequality enables
us to adapt $\|\nabla \cdot\|_{L^p(\T^d;\R^d)}$ as the norm of
$W^{1,p}_{\text{ave}}(\T^d)$ and $\<\nabla\cdot,\nabla\cdot\>_{L^2(\T^d;\R^d)}$
 as the inner product of the Hilbert space $H^1_{\text{ave}}(\T^d)$.

Throughout the paper we use the notation $\<\cdot,\cdot\>$ to denote the
scalar product of duality between a real Banach space and its
topological dual space. We do not specify which duality is being
described by $\<\cdot,\cdot\>$ if it is clear from the context.

Let $H^{-1}_{\text{ave}}(\T^d)$ denote the topological dual space of
$H^1_{\text{ave}}(\T^d)$. We define a linear operator
$-\Dp:H^1_{\text{ave}}(\T^d)\to H^{-1}_{\text{ave}}(\T^d)$ by
$$\<-\Dp f,\cdot\>:=\<\nabla f,\nabla \cdot\>_{L^2(\T^d;\R^d)},\
\forall f\in H^1_{\text{ave}}(\T^d).$$
Because of our choice of the inner product of $H^1_{\text{ave}}(\T^d)$
and Riesz' representation theorem, the operator
$-\Dp:H^1_{\text{ave}}(\T^d)\to H^{-1}_{\text{ave}}(\T^d)$ is an
isometry. The dual space $H^{-1}_{\text{ave}}(\T^d)$ can be considered
as a Hilbert space equipped with the inner product
$\<\cdot,\cdot\>_{H^{-1}_{\text{ave}}(\T^d)}$ defined by 
$$\<f,g\>_{H^{-1}_{\text{ave}}(\T^d)}:=\<(-\Dp)^{-1}f,g\>,\
 \forall f,g\in  H^{-1}_{\text{ave}}(\T^d). 
$$

Introduce the space of smooth periodic functions by
\begin{equation*}
C^{\infty}(\T^d;\R^m):=\left\{\bef\in C^{\infty}(\R^d;\R^m)\ \left|\begin{array}{l}
 \bef(\bx)=\bef(\bx+(m_1\o_1,\cdots,m_d\o_d)),\\
 \forall\bx\in\R^d,\ \forall (m_1,\cdots,m_d)\in\Z^d
\end{array}\right.
\right\}.
\end{equation*}
Again let us simply write $C^{\infty}(\T^d)$ instead of
$C^{\infty}(\T^d;\R)$.
We define a subspace of $C^{\infty}(\T^d)$ by
$$C^{\infty}_{\text{ave}}(\T^d):=\left\{f\in C^{\infty}(\T^d)\ \left|\
\int_{\Op}f(\bx)d\bx =0\right.\right\}.
$$
We will make use of the following density property.
\begin{lemma}\label{lem_density}
The set $C^{\infty}_{\mathrm{ave}}(\T^d)$ is dense in
 $W^{1,p}_{\mathrm{ave}}(\T^d)$.
\end{lemma}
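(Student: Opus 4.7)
The plan is the standard mollification argument adapted to the torus, with a small observation about preservation of the zero-mean condition. Given $f \in W^{1,p}_{\mathrm{ave}}(\T^d)$, I would fix a nonnegative, radially symmetric mollifier $\rho \in C_c^{\infty}(\R^d)$ with $\int_{\R^d}\rho\,d\bx = 1$ and $\supp \rho \subset B_1(\b0)$, and set $\rho_{\eps}(\bx) := \eps^{-d}\rho(\bx/\eps)$. View $f$ as an element of $L^p_{\mathrm{loc}}(\R^d)$ via its periodic extension and define $f_{\eps} := \rho_{\eps} * f$. Then $f_{\eps}\in C^{\infty}(\R^d)$, and $f_{\eps}$ inherits periodicity from $f$ by the translation-invariance of convolution, so $f_{\eps}\in C^{\infty}(\T^d)$.

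Next I would verify that $f_{\eps}$ has zero average. By Fubini and the periodicity of $f$,
\begin{equation*}
\int_{\Op}f_{\eps}(\bx)\,d\bx
=\int_{\R^d}\rho_{\eps}(\by)\int_{\Op}f(\bx-\by)\,d\bx\,d\by
=\int_{\R^d}\rho_{\eps}(\by)\,d\by\cdot\int_{\Op}f(\bx)\,d\bx=0,
\end{equation*}
since the inner integral is translation-invariant on the torus. Hence $f_{\eps}\in C^{\infty}_{\mathrm{ave}}(\T^d)$, so no subsequent mean-correction step is required.

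It remains to show convergence in the $W^{1,p}_{\mathrm{ave}}(\T^d)$-norm, i.e.\ $\nabla f_{\eps}\to\nabla f$ in $L^p(\T^d;\R^d)$ as $\eps\to 0$. This follows from the standard commutation relation $\nabla(\rho_{\eps}*f)=\rho_{\eps}*\nabla f$ (valid since $\nabla f \in L^p_{\mathrm{loc}}(\R^d;\R^d)$) together with the classical fact that convolution with $\rho_{\eps}$ converges to the identity in $L^p_{\mathrm{loc}}$; applied on a periodic cell this gives convergence in $L^p(\T^d;\R^d)$. Combined with Poincar\'e's inequality on $W^{1,p}_{\mathrm{ave}}(\T^d)$, which makes $\|\nabla\cdot\|_{L^p(\T^d;\R^d)}$ an equivalent norm, this yields $f_{\eps}\to f$ in $W^{1,p}_{\mathrm{ave}}(\T^d)$.

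There is no genuine obstacle here; the only point worth pausing on is the zero-mean check, and it is immediate once one uses periodicity inside Fubini's theorem. The argument works verbatim in all spacial dimensions $d$ and for all $p\in(1,\infty)$.
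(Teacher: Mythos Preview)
Your proof is correct and follows essentially the same mollification approach as the paper: define $f_\eps=\rho_\eps*f$ using a standard mollifier, observe that periodicity and the zero-mean condition are preserved, and invoke the usual $L^p$ convergence of mollifiers together with $\nabla(\rho_\eps*f)=\rho_\eps*\nabla f$. The paper's own proof is a two-line sketch of exactly this argument, so your write-up simply fills in the details (the zero-average computation via Fubini and periodicity) that the paper leaves to the reader.
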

\begin{proof}
Let $\rho\in C_0^{\infty}(\R^d)$ be such that
\begin{equation}\label{eq_mollifier_kernel}
\rho(\bx)\ge 0\ (\forall\bx\in\R^d),\ 
\rho(\bx)=0\text{ if }|\bx|\ge 1,\ 
\int_{\R^d}\rho(\bx)d\bx=1.
\end{equation}
For any $f\in W^{1,p}_{\text{ave}}(\T^d)$ and $\delta>0$ define a
 function $f_{\delta}:\R^d\to\R$ by
$$f_{\delta}(\bx):=\int_{\R^d}\delta^{-d}\rho\left(\frac{\bx-\by}{\delta}\right)f(\by)d\by.
$$
By using standard properties of the mollifier and the periodicity of $f$
 one can check that $f_{\delta}\in C^{\infty}_{\text{ave}}(\T^d)$ and
 $f_{\delta}$ converges to $f$ in $W^{1,p}_{\text{ave}}(\T^d)$ as
 $\delta\searrow 0$.
\end{proof}

Remark that these spaces of periodic functions are equivalent to those axiomatically
defined on the compact Riemannian manifold $\T^d$, the flat torus. See
e.g. \cite{J} for the construction of $\T^d$ as a Riemannian manifold and
\cite{Au} for Sobolev spaces on Riemannian manifolds in general.

We define a subset $X_{\text{per}}$ of $H^{-1}_{\text{ave}}(\T^d)$ as
follows. An $f$ $(\in H^{-1}_{\text{ave}}(\T^d))$ belongs to
$X_{\text{per}}$ if there exists $\wf\in
W^{1,p}_{\text{ave}}(\T^d)$ such that 
$$
\<f,\phi\>=\lim_{n\to\infty}\int_{\Op}\wf(\bx)\phi_n(\bx)d\bx,\
 \forall\phi\in H^{1}_{\text{ave}}(\T^d),
$$
where $\{\phi_n\}_{n=1}^{\infty}$ $(\subset
C^{\infty}_{\text{ave}}(\T^d))$ is any sequence converging to $\phi$ in
$H^{1}_{\text{ave}}(\T^d)$ as $n\to \infty$.

Note that for any $f\in X_{\mathrm{per}}$ such $\wf$ $(\in
W^{1,p}_{\text{ave}}(\T^d))$ uniquely exists. From now we use the
notation `` $\widetilde{\cdot}$ '' to indicate the corresponding function
of $W^{1,p}_{\mathrm{ave}}(\T^d)$ to a given element of
$X_{\mathrm{per}}$. It follows that $X_{\mathrm{per}}$ is a real linear
space and the map $f\mapsto
\wf:X_{\mathrm{per}}\to W^{1,p}_{\mathrm{ave}}(\T^d)$ is linear.

By using these notions we now define the functional
$F_{\mathrm{per}}:H^{-1}_{\mathrm{ave}}(\T^d)\to \R\cup \{\infty\}$ by
$$
F_{\mathrm{per}}(f):=\left\{\begin{array}{ll}\displaystyle\int_{\Op}\s(\nabla\wf(\bx))d\bx
		      &\text{if }f\in X_{\mathrm{per}},\\
\infty & \text{otherwise,}
\end{array}
\right.
$$
where $\s:\R^d\to\R$ is defined by
$$\s(\by):=|\by|+\frac{\mu}{p} |\by|^p, \ (\mu >0,\ p\in (1,\infty)).$$

\begin{lemma}\label{lem_F_per_lsc}
The functional $F_{\mathrm{per}}:H^{-1}_{\mathrm{ave}}(\T^d)\to \R\cup
 \{\infty\}$ is convex, lower semi-continuous and not identically
 $\infty$.
\end{lemma}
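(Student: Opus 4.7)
\medskip

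\noindent\textbf{Proof plan.}
The non-triviality and convexity claims are easy. The zero element $0\in H^{-1}_{\mathrm{ave}}(\T^d)$ belongs to $X_{\mathrm{per}}$ with associated function $\wf=0$, so $F_{\mathrm{per}}(0)=0<\infty$. For convexity, $\s(\by)=|\by|+(\mu/p)|\by|^p$ is a sum of two convex functions on $\R^d$ (using $p>1$), hence convex. For $f,g\in X_{\mathrm{per}}$ and $\lambda\in[0,1]$, the already-recorded linearity of the map $f\mapsto\wf$ gives that $\lambda f+(1-\lambda)g\in X_{\mathrm{per}}$ with associated function $\lambda\wf+(1-\lambda)\wg$; pointwise convexity of $\s$ and integration over $\Op$ then yield the convexity inequality, which extends trivially to all of $H^{-1}_{\mathrm{ave}}(\T^d)$ since $F_{\mathrm{per}}\equiv\infty$ off $X_{\mathrm{per}}$.

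Lower semi-continuity is the substantive point. Let $f_n\to f$ in $H^{-1}_{\mathrm{ave}}(\T^d)$ with $\ell:=\liminf_{n\to\infty}F_{\mathrm{per}}(f_n)<\infty$. Extracting a subsequence along which $F_{\mathrm{per}}(f_n)\to\ell$, we have $f_n\in X_{\mathrm{per}}$ for every $n$, and from $|\nabla\wf_n|^p\le(p/\mu)\s(\nabla\wf_n)$ together with Poincar\'e's inequality on $W^{1,p}_{\mathrm{ave}}(\T^d)$, the sequence $\{\wf_n\}$ is bounded in $W^{1,p}_{\mathrm{ave}}(\T^d)$. Since $p>1$, this space is reflexive, so a further subsequence satisfies $\wf_n\wto\wf^*$ in $W^{1,p}_{\mathrm{ave}}(\T^d)$ for some $\wf^*$.

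The decisive step is to identify $f\in X_{\mathrm{per}}$ with $\wf=\wf^*$. For any $\phi\in C^{\infty}_{\mathrm{ave}}(\T^d)$, the defining property of $X_{\mathrm{per}}$ applied with the constant approximating sequence gives $\<f_n,\phi\>=\int_{\Op}\wf_n\phi\,d\bx$. Passing to the limit, $\<f_n,\phi\>\to\<f,\phi\>$ by $H^{-1}$-convergence, while $\int_{\Op}\wf_n\phi\,d\bx\to\int_{\Op}\wf^*\phi\,d\bx$ since $\wf_n\wto\wf^*$ in $L^p(\T^d)$ and $\phi\in L^{p'}(\T^d)$. For a general $\phi\in H^1_{\mathrm{ave}}(\T^d)$ and any sequence $\phi_k\in C^{\infty}_{\mathrm{ave}}(\T^d)$ with $\phi_k\to\phi$ (which exists by Lemma~\ref{lem_density}), continuity of $\<f,\cdot\>$ on $H^1_{\mathrm{ave}}(\T^d)$ combined with the previous step gives $\int_{\Op}\wf^*\phi_k\,d\bx=\<f,\phi_k\>\to\<f,\phi\>$, confirming $f\in X_{\mathrm{per}}$ with $\wf=\wf^*$. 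To finish, the growth estimate $\s(\by)\le C(1+|\by|^p)$ and convexity of $\s$ make $\bg\mapsto\int_{\Op}\s(\bg)d\bx$ a convex, strongly continuous, hence weakly lower semi-continuous, functional on $L^p(\T^d;\R^d)$; applied to $\nabla\wf_n\wto\nabla\wf^*$ in $L^p(\T^d;\R^d)$ it yields $F_{\mathrm{per}}(f)\le\ell$. The identification of $\wf^*$ with $\wf$ is the main obstacle, and it relies on $C^{\infty}_{\mathrm{ave}}(\T^d)$ being a dense test class inside $H^1_{\mathrm{ave}}(\T^d)$.
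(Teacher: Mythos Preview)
Your proof is correct and follows essentially the same route as the paper's: bound $\{\wf_n\}$ in $W^{1,p}_{\mathrm{ave}}(\T^d)$, extract a weak limit, identify it with $\wf$ by testing against $C^{\infty}_{\mathrm{ave}}(\T^d)$ and invoking Lemma~\ref{lem_density}, then use convexity plus strong $L^p$-continuity of $\bg\mapsto\int_{\Op}\s(\bg)\,d\bx$ to pass to the weak limit. The only cosmetic difference is that the paper spells out the Mazur-based passage from strong continuity to weak lower semi-continuity explicitly, whereas you invoke it as a packaged fact.
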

\begin{proof}
Being convex and not identically $\infty$ can be seen from the
 definition. To show the lower semi-continuity of $F_{\mathrm{per}}$,
 assume that $\{f_n\}_{n=1}^{\infty}$ $(\subset
 H^{-1}_{\mathrm{ave}}(\T^d))$ converges to $f$ in
 $H^{-1}_{\mathrm{ave}}(\T^d)$ as $n\to \infty$ and
 $F_{\mathrm{per}}(f_n)\le \lambda$ $(\forall n\in \N)$, where
 $\lambda\ge 0$. 

Since $\{\wf_n\}_{n=1}^{\infty}$ is bounded in
 $W^{1,p}_{\mathrm{ave}}(\T^d)$, there are $g\in
 W^{1,p}_{\mathrm{ave}}(\T^d)$ and a subsequence
 $\{\wf_{n(j)}\}_{j=1}^{\infty}$ of $\{\wf_{n}\}_{n=1}^{\infty}$ such that $\wf_{n(j)}$ weakly converges to
 $g$ in $W^{1,p}_{\mathrm{ave}}(\T^d)$ as $j\to \infty$. Mazur's theorem 
for $H^{-1}_{\mathrm{ave}}(\T^d)\times W^{1,p}_{\mathrm{ave}}(\T^d)$
 guarantees that for any $k\in \N$ there exist
 $j_k\in\N$ and $\alpha_l^k\in [0,1]$ $(l=1,\cdots,j_k)$ satisfying
 $\sum_{l=1}^{j_k}\alpha_l^k=1$ such that as $k\to \infty$
$$
\sum_{l=1}^{j_k}\alpha_l^kf_{n(l)}\to f\text{ in
 }H^{-1}_{\mathrm{ave}}(\T^d),\ \sum_{l=1}^{j_k}\alpha_l^k\wf_{n(l)}\to g\text{ in }W^{1,p}_{\mathrm{ave}}(\T^d).
$$
Moreover, for any $\psi \in C_{\mathrm{ave}}^{\infty}(\T^d)$
\begin{equation*}
\<f,\psi\>=\lim_{k\to\infty}\<\sum_{l=1}^{j_k}\alpha_l^k
 f_{n(l)},\psi\>=\lim_{k\to\infty}\int_{\Op}\sum_{l=1}^{j_k}\alpha_l^k
 \wf_{n(l)}(\bx)\psi(\bx)d\bx=\int_{\Op}g(\bx)\psi(\bx)d\bx.
\end{equation*}
Hence, for any $\phi\in H^{1}_{\mathrm{ave}}(\T^d)$ and
 $\{\phi_n\}_{n=1}^{\infty}$ $(\subset  C_{\mathrm{ave}}^{\infty}(\T^d))$
 converging to $\phi$ in $H^{1}_{\mathrm{ave}}(\T^d)$
$$
\<f,\phi\>=\lim_{n\to \infty}\<f,\phi_n\>=\lim_{n\to \infty}\int_{\Op}g(\bx)\phi_n(\bx)d\bx,
$$
which means that $f\in X_{\mathrm{per}}$ and $g=\wf$.

Then by the convexity and the continuity of $\int_{\Op}\s(\cdot)d\bx$
 in $L^p(\T^d;\R^d)$
\begin{equation*}
\begin{split}
F_{\mathrm{per}}(f)&=\int_{\Op}\s(\nabla \wf(\bx))d\bx=\lim_{k\to
 \infty}\int_{\Op}\s\left(\sum_{l=1}^{j_k}\alpha_l^k\nabla
 \wf_{n(l)}(\bx)\right)d\bx\\
&\le \limsup_{k\to
 \infty}\sum_{l=1}^{j_k}\alpha_l^kF_{\mathrm{per}}(f_{n(l)})\le \lambda,
\end{split}
\end{equation*}
which concludes that $F_{\mathrm{per}}$ is lower semi-continuous in $H^{-1}_{\mathrm{ave}}(\T^d)$.
\end{proof}

\subsection{Function spaces with a Dirichlet boundary condition}
Here we prepare some notions necessary to formulate the Dirichlet
problem. Let $\O$ be an open bounded subset of $\R^d$. By Poincar\'e's
inequality we may choose $\|\nabla \cdot\|_{L^p(\O;\R^d)}$ as the norm
of $W_0^{1,p}(\O)$ and $\<\nabla \cdot,\nabla \cdot\>_{L^2(\O;\R^d)}$
 as the inner product of $H^1_0(\O)$. Let $H^{-1}(\O)$ denote the
 topological dual space of the
Hilbert space $H^1_0(\O)$. We define a linear map $-\Dd:H^1_0(\O)\to
H^{-1}(\O)$ by
$$
\<-\Dd f,\cdot\>:=\<\nabla f,\nabla \cdot\>_{L^2(\O;\R^d)},\  \forall f\in
H_0^1(\O).
$$
By using Riesz' representation theorem we can prove that the linear map
$-\Dd:H^1_0(\O)\to H^{-1}(\O)$ is an isometry. The dual space
$H^{-1}(\O)$ is a Hilbert space having the inner product $\<\cdot,\cdot\>_{H^{-1}(\O)}$ defined by
$$
\<f,g\>_{H^{-1}(\O)}:=\<(-\Dd)^{-1}f,g\>,\ \forall f,g\in H^{-1}(\O).
$$

Let $X_{\mathrm{D}}$ denote a subset of $H^{-1}(\O)$ consisting of any $f\in
H^{-1}(\O)$ for which there exists $\wf\in W_0^{1,p}(\O)$ such that 
$$
\<f,\phi\>=\lim_{n\to \infty}\int_{\O}\wf(\bx)\phi_n(\bx)d\bx,\ \forall
\phi\in H^1_0(\O),
$$
where $\{\phi_n\}_{n=1}^{\infty}$ $(\subset C_0^{\infty}(\O))$ is any
sequence converging to $\phi$ in $H^1_0(\O)$ as $n\to\infty$. For given
$f\in X_{\mathrm{D}}$ such $\wf$ $(\in W_0^{1,p}(\O))$ uniquely
exists. As in the periodic case we use the notation ``
$\widetilde{\cdot}$ '' 
to represent the function of $W_0^{1,p}(\O)$
associated with a given element of $X_{\mathrm{D}}$.

We define the functional $F_{\mathrm{D}}:H^{-1}(\O)\to \R\cup\{\infty\}$
by
$$
F_{\mathrm{D}}(f):=\left\{\begin{array}{ll}\displaystyle\int_{\O}\s(\nabla\wf(\bx))d\bx&\text{if
		    }f\in X_{\mathrm{D}},\\
\infty&\text{otherwise.}
\end{array}
\right.
$$
The following lemma can be proved in the same way as in Lemma
\ref{lem_F_per_lsc}.
\begin{lemma}\label{lem_F_D_lsc}
The functional $F_{\mathrm{D}}:H^{-1}(\O)\to \R\cup\{\infty\}$ is
 convex, lower semi-continuous and not identically $\infty$.
\end{lemma}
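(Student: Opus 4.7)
The plan is to mirror the proof of Lemma \ref{lem_F_per_lsc} step by step in the Dirichlet setting, replacing $(\Op, H^1_{\mathrm{ave}}(\T^d), W^{1,p}_{\mathrm{ave}}(\T^d), X_{\mathrm{per}}, C^{\infty}_{\mathrm{ave}}(\T^d))$ by $(\O, H^1_0(\O), W^{1,p}_0(\O), X_{\mathrm{D}}, C_0^{\infty}(\O))$ throughout. Convexity and the fact that $F_{\mathrm{D}}\not\equiv\infty$ are immediate from the definitions: the map $f\mapsto\wf:X_{\mathrm{D}}\to W^{1,p}_0(\O)$ is linear, $\s$ is convex on $\R^d$, and $F_{\mathrm{D}}(0)=0$ by taking $\widetilde{0}=0$.

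For lower semi-continuity I would proceed as follows. Given $\{f_n\}_{n=1}^{\infty}\subset H^{-1}(\O)$ with $f_n\to f$ in $H^{-1}(\O)$ and $F_{\mathrm{D}}(f_n)\le \lambda$ for every $n$, the coercive term $(\mu/p)|\by|^p$ inside $\s$ provides a uniform $L^p$-bound on $\nabla \wf_n$, hence on $\|\wf_n\|_{W^{1,p}_0(\O)}$. Reflexivity of $W^{1,p}_0(\O)$ for $p\in(1,\infty)$ then yields a subsequence $\wf_{n(j)}$ that weakly converges to some $g\in W^{1,p}_0(\O)$. Applying Mazur's theorem in the product space $H^{-1}(\O)\times W^{1,p}_0(\O)$, I extract convex combinations $\{\alpha_l^k\}_{l=1}^{j_k}$ such that
$$\sum_{l=1}^{j_k}\alpha_l^k f_{n(l)}\to f \text{ in }H^{-1}(\O),\qquad \sum_{l=1}^{j_k}\alpha_l^k \wf_{n(l)}\to g \text{ in }W^{1,p}_0(\O),$$
as $k\to\infty$.

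Next, testing against any $\psi\in C_0^{\infty}(\O)$ gives $\<f,\psi\>=\int_{\O}g(\bx)\psi(\bx)d\bx$. Since $C_0^{\infty}(\O)$ is dense in $H^1_0(\O)$ by the very definition of $H^1_0(\O)$, this identity extends to every $\phi\in H^1_0(\O)$ through an approximating sequence $\{\phi_n\}\subset C_0^{\infty}(\O)$, which shows $f\in X_{\mathrm{D}}$ with $\wf=g$. Finally, convexity of $\s$ together with continuity of $\int_{\O}\s(\cdot)d\bx$ on $L^p(\O;\R^d)$ applied to $\nabla\!\left(\sum_{l=1}^{j_k}\alpha_l^k \wf_{n(l)}\right)=\sum_{l=1}^{j_k}\alpha_l^k \nabla\wf_{n(l)}$ yields
$$F_{\mathrm{D}}(f)=\int_{\O}\s(\nabla g(\bx))d\bx\le \limsup_{k\to\infty}\sum_{l=1}^{j_k}\alpha_l^k F_{\mathrm{D}}(f_{n(l)})\le \lambda,$$
which closes the argument.

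I do not anticipate any genuine obstacle. Every structural ingredient used in Lemma \ref{lem_F_per_lsc} (the isometry $-\Dd$, the unique existence and linearity of $f\mapsto\wf$, reflexivity of $W^{1,p}_0(\O)$, and density of $C_0^{\infty}(\O)$ in $H^1_0(\O)$) is already in place in the excerpt. The only minor difference from the periodic case is that no zero-average constraint needs to be carried through the mollification or approximation steps, so the Dirichlet version of Lemma \ref{lem_density} holds by definition and the proof is if anything slightly cleaner than its periodic counterpart.
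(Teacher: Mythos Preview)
Your proposal is correct and is precisely the approach the paper intends: the paper states only that Lemma~\ref{lem_F_D_lsc} ``can be proved in the same way as in Lemma~\ref{lem_F_per_lsc},'' and your write-up carries out that translation faithfully, with no missing steps.
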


\subsection{Subdifferentials}
Subdifferential is an extended concept of differential. Subdifferential
of a functional becomes a multi-valued operator if the functional is not
differentiable in the normal sense. Let us see this by calculating
the subdifferential of the energy density $\s$. The subdifferential
$\partial \s(\cdot):\R^d\to 2^{\R^d}$ is defined by
\begin{equation*}
\partial \s(\bx):=\{\by\in\R^d\ |\
 \<\by,\bz\>_{\R^d}+\s(\bx)\le \s(\bx+\bz),\ \forall \bz\in\R^d\},\
 \forall \bx\in\R^d.
\end{equation*}
It follows directly from the definition that
$$\partial
\s(\bx)=\left\{\begin{array}{ll}\{|\bx|^{-1}\bx +\mu|\bx|^{p-2}\bx\}& \text{
	 if }\bx\neq\b0,\\
\{\by\in\R^d\ |\ |\by|\le 1\}& \text{ if }\bx=\b0.
\end{array}
\right.$$
From this characterization we see that if $\bx\neq \b0$ the only element
of $\partial \s(\bx)$ is nothing but the gradient of $\s(\cdot)$ at
$\bx$. However, at $\bx=\b0$, where $\s(\cdot)$ is not differentiable,
$\partial\s(\bx)$ becomes multi-valued.

We define the subdifferential $\partial
F_{\mathrm{per}}(\cdot):H^{-1}_{\mathrm{ave}}(\T^d)\to
2^{H^{-1}_{\mathrm{ave}}(\T^d)}$ of $F_{\mathrm{per}}$ by
$$
\partial F_{\mathrm{per}}(f):=\{g\in H^{-1}_{\mathrm{ave}}(\T^d)\ |\
\<g,h\>_{H^{-1}_{\mathrm{ave}}(\T^d)}+F_{\mathrm{per}}(f)\le
F_{\mathrm{per}}(f+h),\ \forall h\in H^{-1}_{\mathrm{ave}}(\T^d)\}
$$
and the subdifferential $\partial F_{\mathrm{D}}(\cdot):H^{-1}(\O)\to
2^{H^{-1}(\O)}$ of $F_{\mathrm{D}}$ by
$$
\partial F_{\mathrm{D}}(f):=\{g\in H^{-1}(\O)\ |\
\<g,h\>_{H^{-1}(\O)}+F_{\mathrm{D}}(f)\le
F_{\mathrm{D}}(f+h),\ \forall h\in H^{-1}(\O)\}.
$$
Our main purpose is to characterize $\partial F_{\mathrm{per}}(\cdot)$ and $\partial
F_{\mathrm{D}}(\cdot)$. The results are the following.

\begin{theorem}\label{thm_subdifferential_per}
If $\partial F_{\mathrm{per}}(f)\neq \emptyset$,
\begin{equation*}
\partial F_{\mathrm{per}}(f)=\left\{-(-\Dp)\div \bg\
 \left|\begin{array}{l}\bg\in L^{p/(p-1)}(\T^d;\R^d) \text{ satisfying
  that }\\ \div\bg\in
  H_{\mathrm{ave}}^1(\T^d),\\ \bg(\bx)\in\partial\s(\nabla \wf(\bx))
   \text{ a.e. }\bx\in\R^d\end{array}
\right.\right\}.
\end{equation*}
\end{theorem}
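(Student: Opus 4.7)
For the inclusion $\supseteq$, I would fix a candidate $g=-(-\Dp)\div\bg$ with $\bg\in L^{p/(p-1)}(\T^d;\R^d)$, $\div\bg\in H^1_{\mathrm{ave}}(\T^d)$, and $\bg(\bx)\in\partial\s(\nabla\wf(\bx))$ a.e. The assumption $\partial F_{\mathrm{per}}(f)\neq\emptyset$ forces $f\in X_{\mathrm{per}}$, so $\wf\in W^{1,p}_{\mathrm{ave}}(\T^d)$ is available. For $h\in H^{-1}_{\mathrm{ave}}(\T^d)$ the subdifferential inequality is trivial when $h\notin X_{\mathrm{per}}$ (linearity of $X_{\mathrm{per}}$ then gives $f+h\notin X_{\mathrm{per}}$ and $F_{\mathrm{per}}(f+h)=\infty$); for $h\in X_{\mathrm{per}}$ the key identity to be established is
\begin{equation*}
 \langle g,h\rangle_{H^{-1}_{\mathrm{ave}}(\T^d)}=\int_{\Op}\bg(\bx)\cdot\nabla\widetilde{h}(\bx)\,d\bx.
\end{equation*}
I would obtain it by mollifying $\bg$ to smooth $\bg_n\in C^{\infty}(\T^d;\R^d)$ with $\bg_n\to\bg$ in $L^{p/(p-1)}(\T^d;\R^d)$ and $\div\bg_n\to\div\bg$ in $H^1_{\mathrm{ave}}(\T^d)$: the elementary identity $\int_{\Op}\bg_n\cdot\nabla\widetilde{h}\,d\bx=-\int_{\Op}\widetilde{h}\,\div\bg_n\,d\bx$ passes to the limit on the left by H\"older's inequality, and on the right by invoking $\div\bg_n\in C^{\infty}_{\mathrm{ave}}(\T^d)$ as test functions in the $X_{\mathrm{per}}$-defining identity, giving $-\langle h,\div\bg\rangle=\langle(-\Dp)^{-1}g,h\rangle=\langle g,h\rangle_{H^{-1}_{\mathrm{ave}}(\T^d)}$. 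Integrating the pointwise inequality $\s(\nabla\wf+\nabla\widetilde{h})\ge\s(\nabla\wf)+\bg\cdot\nabla\widetilde{h}$ then completes this direction.

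For the inclusion $\subseteq$, I would fix $g\in\partial F_{\mathrm{per}}(f)$ and introduce the auxiliary functional
\begin{equation*}
 \Psi(\bw):=\int_{\Op}\left[\s(\nabla\wf(\bx)+\bw(\bx))-\s(\nabla\wf(\bx))\right]d\bx,\qquad\bw\in L^p(\T^d;\R^d),
\end{equation*}
which is convex, finite, and continuous on $L^p(\T^d;\R^d)$ by the polynomial growth of $\s$. On the linear subspace $S:=\{\nabla\widetilde{h}:h\in X_{\mathrm{per}}\}\subset L^p(\T^d;\R^d)$ the rule $\Lambda(\nabla\widetilde{h}):=\langle g,h\rangle_{H^{-1}_{\mathrm{ave}}(\T^d)}$ defines a linear functional (well defined because the mean-zero normalisation makes $h\mapsto\nabla\widetilde{h}$ injective), and the subdifferential inequality for $g$ becomes exactly $\Lambda\le\Psi$ on $S$. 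The Hahn--Banach theorem for convex functions (separating the graph of $\Lambda$ from the open epigraph of $\Psi$ in $L^p(\T^d;\R^d)\times\R$) produces a linear extension $\tilde\Lambda:L^p(\T^d;\R^d)\to\R$ with $\tilde\Lambda\le\Psi$ everywhere; the sandwich $-\Psi(-\bw)\le\tilde\Lambda(\bw)\le\Psi(\bw)$ together with local boundedness of $\Psi$ makes $\tilde\Lambda$ continuous, so Riesz representation delivers $\bg\in L^{p/(p-1)}(\T^d;\R^d)$ with $\tilde\Lambda(\bw)=\int_{\Op}\bg(\bx)\cdot\bw(\bx)\,d\bx$.

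Two verifications then conclude. First, from $\int_{\Op}\bg\cdot\bw\,d\bx\le\Psi(\bw)$ on all of $L^p(\T^d;\R^d)$, the substitution $\bu:=\nabla\wf+\bw$ and taking the supremum over $\bu\in L^p(\T^d;\R^d)$ yield $\int_{\Op}\s^{*}(\bg)\,d\bx+\int_{\Op}\s(\nabla\wf)\,d\bx\le\int_{\Op}\bg\cdot\nabla\wf\,d\bx$; coupled with the pointwise Fenchel--Young inequality $\s(\bx)+\s^{*}(\by)\ge\bx\cdot\by$, this forces equality almost everywhere, and hence $\bg(\bx)\in\partial\s(\nabla\wf(\bx))$ a.e. Second, using $\tilde\Lambda=\Lambda$ on test functions $\phi\in C^{\infty}_{\mathrm{ave}}(\T^d)\subset X_{\mathrm{per}}$ (with $\widetilde{\phi}=\phi$) gives $\int_{\Op}\bg\cdot\nabla\phi\,d\bx=\int_{\Op}(-\Dp)^{-1}g\cdot\phi\,d\bx$ for every such $\phi$, and since $-\div\bg$ (as a periodic distribution) and $(-\Dp)^{-1}g\in H^1_{\mathrm{ave}}(\T^d)$ both have mean zero, this forces $-\div\bg=(-\Dp)^{-1}g\in H^1_{\mathrm{ave}}(\T^d)$, whence $\div\bg\in H^1_{\mathrm{ave}}(\T^d)$ and $g=-(-\Dp)\div\bg$. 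The principal obstacle is the Hahn--Banach step: preserving the convex majorisation by $\Psi$ (rather than merely a linear norm bound) is exactly what, through Fenchel--Young, forces the pointwise subgradient condition on $\bg$; continuity of $\Psi$ on $L^p(\T^d;\R^d)$, a consequence of the polynomial growth of $\s$, is what makes this form of Hahn--Banach applicable and the extension automatically continuous.
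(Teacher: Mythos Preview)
Your proof is correct and takes a genuinely different route from the paper for the inclusion $\subseteq$. The paper works through an explicit characterization of the conjugate functional $F_{\mathrm{per}}^{\#}$: it introduces auxiliary functionals $Q$ and $R$ on the product space $H^{-1}_{\mathrm{ave}}(\T^d)\times L^{p}(\T^d;\R^d)$, computes $Q^*$ and $R^*$ separately, and then proves that $(Q+R)^*$ equals their inf-convolution (the lengthy Lemma~\ref{lem_Q_R_conjugate}, which requires establishing lower semi-continuity of the inf-convolution and that the infimum is attained). From the resulting formula for $F_{\mathrm{per}}^{\#}$ the paper reads off both inclusions via the Fenchel identity $F_{\mathrm{per}}(f)+F_{\mathrm{per}}^{\#}(u)=\<f,u\>_{H^{-1}_{\mathrm{ave}}(\T^d)}$. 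Your argument bypasses all of this: the Hahn--Banach extension of $\Lambda$ dominated by the convex, everywhere-finite $\Psi$ directly produces the vector field $\bg$, and a single application of Fenchel--Young (together with the identity $\sup_{\bu\in L^p}\int_{\Op}[\bg\cdot\bu-\s(\bu)]\,d\bx=\int_{\Op}\s^{\#}(\bg)\,d\bx$, which the paper isolates as Lemma~\ref{lem_Q_conjugate}) forces the pointwise inclusion $\bg\in\partial\s(\nabla\wf)$. Your $\supseteq$ direction is essentially the same as the paper's, both relying on the mollification of Lemma~\ref{lem_density_div} and the integration-by-parts of Lemma~\ref{lem_integration_by_parts}.

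What each approach buys: yours is shorter and more elementary for the specific goal of characterizing $\partial F_{\mathrm{per}}$, invoking only Hahn--Banach and Riesz rather than the biconjugate/inf-convolution machinery. The paper's approach, on the other hand, yields the explicit formula for $F_{\mathrm{per}}^{\#}$ (Lemma~\ref{lem_F_per_conjugate}) as an independent result, and its structure transfers verbatim to the Dirichlet case, where the extra constraint \eqref{eq_additional} emerges naturally from the computation of $R_{\mathrm{D}}^*$; your Hahn--Banach argument would also adapt to the Dirichlet setting, but the verification that the extended $\tilde\Lambda$ recovers $-\div\bg=(-\Dd)^{-1}g$ together with \eqref{eq_additional} would need to be done by hand rather than falling out of a conjugate-functional calculation.
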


\begin{theorem}\label{thm_subdifferential_D}
If $\partial F_{\mathrm{D}}(f)\neq \emptyset$,
\begin{equation*}
\partial F_{\mathrm{D}}(f)=\left\{-(-\Dd)\div \bg\
 \left|\begin{array}{l}\bg\in L^{p/(p-1)}(\O;\R^d) \text{ satisfying
  that }\\ \div\bg\in
  H_{0}^1(\O),\\ \bg(\bx)\in\partial\s(\nabla \wf(\bx))
   \text{ a.e. }\bx\in\O,\\
\<h,\div\bg\>+\int_{\O}\<\nabla
 \widetilde{h}(\bx),\bg(\bx)\>_{\R^d}d\bx=0,\ \forall h\in X_{\mathrm{D}}
\end{array}
\right.\right\}.
\end{equation*}
\end{theorem}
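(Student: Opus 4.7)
The plan is to apply the convex-analytic identity
\begin{equation*}
g\in\partial F_{\mathrm{D}}(f)\iff F_{\mathrm{D}}(f)+F_{\mathrm{D}}^*(g)=\<g,f\>_{H^{-1}(\O)},
\end{equation*}
where $F_{\mathrm{D}}^*(g):=\sup_{h\in H^{-1}(\O)}\{\<g,h\>_{H^{-1}(\O)}-F_{\mathrm{D}}(h)\}$ is the Legendre conjugate; this criterion is available because Lemma \ref{lem_F_D_lsc} ensures that $F_{\mathrm{D}}$ is proper, convex, and lower semicontinuous. The core of the work is an explicit formula for $F_{\mathrm{D}}^*$: namely, $F_{\mathrm{D}}^*(g)<\infty$ iff $g=-(-\Dd)\div\bg$ for some $\bg\in L^{p/(p-1)}(\O;\R^d)$ with $\div\bg\in H^1_0(\O)$ satisfying the orthogonality relation $\<h,\div\bg\>+\int_{\O}\<\nabla\widetilde{h},\bg\>_{\R^d}\,d\bx=0$ for every $h\in X_{\mathrm{D}}$, in which case $F_{\mathrm{D}}^*(g)=\int_{\O}\s^*(\bg(\bx))\,d\bx$ with $\s^*$ the Legendre transform of $\s$. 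The argument parallels the one for $F_{\mathrm{per}}^*$ behind Theorem \ref{thm_subdifferential_per}, but the orthogonality has to be tracked explicitly because the Dirichlet setting loses the automatic vanishing of boundary contributions that integration by parts enjoys in the periodic case.

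To extract $\bg$ from a finite-valued $F_{\mathrm{D}}^*(g)$ I would scale: testing $\<g,\lambda f\>_{H^{-1}(\O)}\le F_{\mathrm{D}}(\lambda f)+F_{\mathrm{D}}^*(g)$ at varying $\lambda>0$ and optimizing yields a bound $|\<g,f\>_{H^{-1}(\O)}|\le C\bigl(\|\nabla\wf\|_{L^1(\O;\R^d)}+\|\nabla\wf\|_{L^p(\O;\R^d)}\bigr)$ for $f\in X_{\mathrm{D}}$. Hence the linear map $\nabla\wf\mapsto\<g,f\>_{H^{-1}(\O)}$ is continuous on $\{\nabla\wf:f\in X_{\mathrm{D}}\}\subset L^p(\O;\R^d)$; Hahn--Banach extension and Riesz representation in $L^p$--$L^{p/(p-1)}$ duality produce $\bg\in L^{p/(p-1)}(\O;\R^d)$ with $\<g,f\>_{H^{-1}(\O)}=\int_{\O}\<\nabla\wf,\bg\>_{\R^d}\,d\bx$ for all $f\in X_{\mathrm{D}}$. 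Testing against smooth compactly supported $\wf$ and comparing with $\<(-\Dd)^{-1}g,f\>_{H^1_0(\O),H^{-1}(\O)}$ identifies $(-\Dd)^{-1}g=-\div\bg$ as distributions, so $\div\bg\in H^1_0(\O)$ and $g=-(-\Dd)\div\bg$; the orthogonality relation then drops out from matching the two representations of $\<g,h\>_{H^{-1}(\O)}$ on $X_{\mathrm{D}}$. The upper bound $F_{\mathrm{D}}^*(g)\le\int_{\O}\s^*(\bg)\,d\bx$ uses the orthogonality to rewrite $\<g,h\>_{H^{-1}(\O)}=\int_{\O}\<\nabla\widetilde{h},\bg\>_{\R^d}\,d\bx$ and then invokes the pointwise Young--Fenchel inequality $\<\nabla\widetilde{h},\bg\>_{\R^d}\le\s(\nabla\widetilde{h})+\s^*(\bg)$ integrated over $\O$; the matching lower bound is obtained by taking supremum over $f$ whose gradients approximate the pointwise Fenchel maximizer.

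Once $F_{\mathrm{D}}^*$ is characterized, the subdifferential identity follows quickly. For admissible $g=-(-\Dd)\div\bg$ the Fenchel equality becomes, upon rewriting $\<g,f\>_{H^{-1}(\O)}$ via the orthogonality with $h=f$,
\begin{equation*}
\int_{\O}\bigl(\s(\nabla\wf(\bx))+\s^*(\bg(\bx))-\<\nabla\wf(\bx),\bg(\bx)\>_{\R^d}\bigr)\,d\bx=0,
\end{equation*}
whose pointwise nonnegative integrand must vanish a.e., forcing $\bg(\bx)\in\partial\s(\nabla\wf(\bx))$ a.e.; this yields the ``$\subseteq$'' inclusion, and the reverse ``$\supseteq$'' inclusion is obtained by running the same chain of equalities in reverse to verify the subgradient inequality directly.

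The principal obstacle is the production of $\bg\in L^{p/(p-1)}(\O;\R^d)$ from a finite-valued $F_{\mathrm{D}}^*(g)$ in arbitrary spacial dimension and for every $p>1$. The argument in \cite{K} worked on the level of scalar functions and needed Sobolev embedding to control the associated linear functional, hence inherited restrictions on $p$ and $d$. Working instead with the vector-field formulation and $L^p$--$L^{p/(p-1)}$ duality circumvents this, but the resulting Hahn--Banach extension is highly non-unique, and the orthogonality condition, which is automatic in the periodic case through absence of boundary, is precisely what selects those representatives $\bg$ that genuinely reproduce $g$ through $-(-\Dd)\div\bg$ on all of $X_{\mathrm{D}}$.
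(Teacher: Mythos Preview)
Your overall strategy---characterize $F_{\mathrm{D}}^{\#}$ and then invoke the Fenchel identity---matches the paper's. The ``$\supseteq$'' direction works as you describe. The gap is in the formula $F_{\mathrm{D}}^*(g)=\int_{\O}\s^*(\bg)\,d\bx$ for the $\bg$ produced by Hahn--Banach. This equality is generally \emph{false}: the Hahn--Banach extension is highly non-unique, and any two representing vector fields differ by an element of the annihilator of the gradient subspace, i.e.\ a field $\bw$ with $\div\bw=0$ and the orthogonality condition. For such perturbations $\int_{\O}\s^*(\bg+\bw)\,d\bx$ changes while $F_{\mathrm{D}}^*(g)$ does not. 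Concretely, take $g=0$ on a ball of radius $R>1$: then $F_{\mathrm{D}}^*(0)=0$, yet $\bg(\bx)=(-x_2,x_1,0,\ldots,0)$ is a legitimate Hahn--Banach representative with $\int_{\O}\s^*(\bg)\,d\bx>0$. Your proposed lower bound (``gradients approximate the pointwise Fenchel maximizer'') cannot work, because the Fenchel maximizer is a generic $L^p$ vector field and gradients of $W_0^{1,p}$ functions form a proper closed subspace of $L^p(\O;\R^d)$.

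This breaks the ``$\subseteq$'' inclusion: from $u\in\partial F_{\mathrm{D}}(f)$ you only obtain $F_{\mathrm{D}}(f)+F_{\mathrm{D}}^{\#}(u)=\<u,f\>_{H^{-1}(\O)}\le F_{\mathrm{D}}(f)+\int_{\O}\s^*(\bg)\,d\bx$, which gives no pointwise information on $\bg$. What is actually true, and what the paper proves, is
\[
F_{\mathrm{D}}^{\#}(u)=\min_{\bg\in Y_{\mathrm{D}}(-u)}\int_{\O}\s^{\#}(\bg(\bx))\,d\bx,
\]
a \emph{minimum} over all admissible representatives. The paper obtains this by lifting to the product space $H^{-1}(\O)\times L^p(\O;\R^d)$, writing $F_{\mathrm{D}}=(Q_{\mathrm{D}}+R_{\mathrm{D}})$ where $R_{\mathrm{D}}$ is the indicator of the graph constraint, and then showing $(Q_{\mathrm{D}}+R_{\mathrm{D}})^*$ equals the inf-convolution $Q_{\mathrm{D}}^*\,\square\,R_{\mathrm{D}}^*$ with the infimum attained (Lemma \ref{lem_Q_R_D_conjugate}, proved via Mazur's theorem and lower semicontinuity). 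Only a minimizing $\bg$ satisfies the pointwise Fenchel equality, and it is \emph{that} $\bg$ which lands in $\partial\s(\nabla\wf)$ a.e. Your Hahn--Banach step successfully shows $Y_{\mathrm{D}}(-u)\neq\emptyset$ whenever $F_{\mathrm{D}}^{\#}(u)<\infty$, but you still need to select the minimizer---that is the missing idea.
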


By assuming an additional condition on $p(\in(1,\infty))$ we can simplify the characterization of
 Theorem \ref{thm_subdifferential_D} as follows.

\begin{corollary}\label{cor_subdifferential_D_restricted}
Assume that 
\begin{equation}\label{eq_condition_D}
\begin{array}{ll}
p\in(1,\infty)&\text{if }d\le 4,\\
p\in\left[\displaystyle\frac{2d}{d+4}, \infty\right)&\text{if }d\ge 5.
\end{array}
\end{equation}
If $\partial F_{\mathrm{D}}(f)\neq \emptyset$,
\begin{equation*}
\partial F_{\mathrm{D}}(f)=\left\{-(-\Dd)\div \bg\
 \left|\begin{array}{l}\bg\in L^{p/(p-1)}(\O;\R^d) \text{ satisfying
  that }\\ \div\bg\in
  H_{0}^1(\O),\\ \bg(\bx)\in\partial\s(\nabla \wf(\bx))
   \text{ a.e. }\bx\in\O
\end{array}
\right.\right\}.
\end{equation*}
\end{corollary}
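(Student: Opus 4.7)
The plan is to reduce Corollary \ref{cor_subdifferential_D_restricted} directly to Theorem \ref{thm_subdifferential_D} by showing that, under assumption \eqref{eq_condition_D}, the orthogonality condition
\begin{equation*}
\<h,\div\bg\>+\int_{\O}\<\nabla \widetilde{h}(\bx),\bg(\bx)\>_{\R^d}d\bx=0,\quad \forall h\in X_{\mathrm{D}}
\end{equation*}
appearing in Theorem \ref{thm_subdifferential_D} is automatic whenever $\bg\in L^{p/(p-1)}(\O;\R^d)$ satisfies $\div\bg\in H_0^1(\O)$. The a.e.\ inclusion $\bg(\bx)\in\partial\s(\nabla\wf(\bx))$ plays no role in this reduction, so I can fix an arbitrary such $\bg$ and an arbitrary $h\in X_{\mathrm{D}}$.

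The crucial analytic ingredient is a Sobolev embedding. For $d\ge 3$, setting $2^*:=2d/(d-2)$, one has $H_0^1(\O)\hookrightarrow L^{2^*}(\O)$. Condition \eqref{eq_condition_D} is designed precisely so that
\begin{equation*}
W_0^{1,p}(\O)\hookrightarrow L^{(2^*)'}(\O)=L^{2d/(d+2)}(\O),
\end{equation*}
which follows either from $L^p(\O)\subset L^{2d/(d+2)}(\O)$ on the bounded set $\O$ (when $p\ge 2d/(d+2)$), or from the standard Sobolev embedding $W_0^{1,p}(\O)\hookrightarrow L^{dp/(d-p)}(\O)$ combined with the elementary equivalence $dp/(d-p)\ge 2d/(d+2)\iff p\ge 2d/(d+4)$. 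For $d\le 4$ the inequality $2d/(d+4)\le 1<p$ makes the condition vacuous, while for $d=1,2$ the embeddings of $H_0^1(\O)$ into $L^q(\O)$ for every $q<\infty$ (or into $C(\overline\O)$) make every step below strictly easier; I concentrate on $d\ge 3$.

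The first step is to establish the integration-by-parts identity
\begin{equation*}
\int_{\O}(\div\bg)(\bx)\widetilde{h}(\bx)\,d\bx=-\int_{\O}\<\bg(\bx),\nabla \widetilde{h}(\bx)\>_{\R^d}d\bx.
\end{equation*}
By density of $C_0^{\infty}(\O)$ in $W_0^{1,p}(\O)$ (a mollification argument as in Lemma \ref{lem_density}), pick $\{\psi_m\}\subset C_0^{\infty}(\O)$ with $\psi_m\to \widetilde{h}$ in $W_0^{1,p}(\O)$. The distributional definition of $\div\bg$ gives the identity with $\psi_m$ in place of $\widetilde{h}$. Passing $m\to\infty$, the right-hand side converges by H\"older using $\bg\in L^{p/(p-1)}$ and $\nabla\psi_m\to\nabla\widetilde{h}$ in $L^p$; the left-hand side converges by H\"older using $\div\bg\in H_0^1(\O)\hookrightarrow L^{2^*}(\O)$ paired with $\psi_m\to\widetilde{h}$ in $L^{(2^*)'}(\O)$, precisely the convergence supplied by the embedding above.

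Second, identify the duality pairing with an absolutely convergent integral. Pick $\{\phi_n\}\subset C_0^{\infty}(\O)$ with $\phi_n\to\div\bg$ in $H_0^1(\O)$; by definition of $X_{\mathrm{D}}$,
\begin{equation*}
\<h,\div\bg\>=\lim_{n\to\infty}\int_{\O}\widetilde{h}(\bx)\phi_n(\bx)\,d\bx=\int_{\O}\widetilde{h}(\bx)(\div\bg)(\bx)\,d\bx,
\end{equation*}
the second equality again by H\"older, with $\widetilde{h}\in L^{(2^*)'}(\O)$ and $\phi_n\to\div\bg$ in $L^{2^*}(\O)$. Combining the two displayed identities yields the orthogonality condition and completes the reduction. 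The main obstacle, and the reason the assumption \eqref{eq_condition_D} cannot be removed by this route, is the sharpness of the Sobolev--H\"older balance: without the embedding $W_0^{1,p}(\O)\hookrightarrow L^{(2^*)'}(\O)$, the integral $\int_{\O}\widetilde{h}(\div\bg)\,d\bx$ need not be absolutely convergent and both limit passages break down, which is exactly why the unrestricted Theorem \ref{thm_subdifferential_D} must retain the orthogonality clause.
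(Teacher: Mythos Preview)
Your proof is correct and follows essentially the same approach as the paper: both reduce to Theorem \ref{thm_subdifferential_D} by showing, via Sobolev embeddings, that the pairing $\int_{\O}\widetilde{h}\,(\div\bg)\,d\bx$ is well-defined and continuous on $W_0^{1,p}(\O)\times H_0^1(\O)$, after which density of $C_0^{\infty}(\O)$ in both spaces makes the orthogonality condition automatic. The paper's case analysis ($d\le 2$; $p\ge d$; $d\ge 3$ with $p<d$) and yours (focusing on $d\ge 3$ with $d\le 2$ as the trivial case) are just different packagings of the same Sobolev inequality $dp/(d-p)\ge 2d/(d+2)\iff p\ge 2d/(d+4)$, and your write-up is somewhat more explicit about the two separate limit passages.
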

\begin{remark}
In Lemma \ref{lem_F_per_lsc} and Lemma \ref{lem_F_D_lsc} we have seen
 that both
 $F_{\mathrm{per}}:H_{\mathrm{ave}}^{-1}(\T^d)\to\R\cup\{\infty\}$ and
 $F_{\mathrm{D}}:H^{-1}(\O)\to\R\cup\{\infty\}$ are convex, lower
 semi-continuous, and not identically $\infty$. These properties are
 sufficient to ensure the unique solvability of the initial value
 problems to find $f_{\mathrm{per}}\in
 C([0,\infty);H_{\mathrm{ave}}^{-1}(\T^d))$ and $f_{\mathrm{D}}\in
 C([0,\infty);H^{-1}(\O))$ such that 
$$
\left\{\begin{array}{l}
\frac{d}{dt}f_{\mathrm{per}}(t)\in -\partial
 F_{\mathrm{per}}(f_{\mathrm{per}}(t))\text{ a.e.}t>0,\\
f_{\mathrm{per}}(0)=f_{\mathrm{per},0}(\in \overline{X_{\mathrm{per}}}),
\end{array}
\right.\ 
\left\{\begin{array}{l}
\frac{d}{dt}f_{\mathrm{D}}(t)\in -\partial
 F_{\mathrm{D}}(f_{\mathrm{D}}(t))\text{ a.e.}t>0,\\
f_{\mathrm{D}}(0)=f_{\mathrm{D},0}(\in \overline{X_{\mathrm{D}}})
\end{array}
\right.
$$
(see e.g. \cite{B}). Theorems above characterize the right hand sides
 of these evolution systems and provide us with explicit
 representations comparable to the right hand side of the original model 
 \eqref{eq_singular_pde}.
\end{remark}

\section{Proof of the characterization of subdifferentials}\label{sec_proofs}
In this section we prove Theorem \ref{thm_subdifferential_per}, Theorem
\ref{thm_subdifferential_D} and Corollary
\ref{cor_subdifferential_D_restricted}. Let us fix some notational
conventions and recall a few basic facts from convex analysis
beforehand. For a real Banach space $B$ let $B^*$ denote its topological
dual space. For a functional $E:B\to \R\cup\{\infty\}$ being not
identically $\infty$ its conjugate functional $E^*:B^*\to\R\cup\{\infty\}$ is
defined by
$$
E^*(v):=\sup_{u\in B}\{\<v,u\>-E(u)\},\ \forall v\in B^*.
$$ 

\begin{lemma}\label{lem_standard_properties_Banach}
Assume that $E:B\to \R\cup\{\infty\}$ is convex, lower semi-continuous
 and not identically $\infty$. The following hold true.
\begin{enumerate}[(1)]
\item\label{properties_conjugate} $E^*:B^*\to\R\cup \{\infty\}$ is convex, lower semi-continuous and
      not identically $\infty$.
\item\label{double_conjugate} $(E^*)^*(v)=E(v)$, $\forall v\in B$.
\end{enumerate}
\end{lemma}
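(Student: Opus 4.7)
\textbf{Proof plan for Lemma \ref{lem_standard_properties_Banach}.} For part (\ref{properties_conjugate}), my approach is to write $E^*$ as a pointwise supremum of continuous affine functionals on $B^*$. Indeed, for each $u\in B$ with $E(u)<\infty$, the map $v\mapsto\<v,u\>-E(u)$ is affine and norm-continuous, hence convex and lower semi-continuous on $B^*$; the pointwise supremum over such $u$ preserves both properties and equals $E^*$. To rule out $E^*\equiv\infty$ I need a continuous affine minorant of $E$: any inequality $\<v_0,u\>-c\le E(u)$ immediately yields $E^*(v_0)\le c$. Such a minorant is obtained by strictly separating a point $(u_*,r)$ with $u_*\in\mathrm{dom}(E)$ and $r<E(u_*)$ from the nonempty closed convex epigraph $\mathrm{epi}(E)$ via the geometric Hahn--Banach theorem, whose closedness is precisely the hypothesis that $E$ is lower semi-continuous.

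For part (\ref{double_conjugate}), the easy inequality $(E^*)^*(v)\le E(v)$ follows from the Fenchel--Young inequality $\<v^*,v\>-E^*(v^*)\le E(v)$ by taking the supremum over $v^*\in B^*$. The reverse inequality I argue by contradiction: assume $(E^*)^*(v_0)<r<E(v_0)$ for some $v_0\in B$ and $r\in\R$. Then $(v_0,r)\notin\mathrm{epi}(E)$, so strict Hahn--Banach separation yields $(w,\alpha)\in B^*\times\R$ and $\beta\in\R$ with
\begin{equation*}
\<w,u\>+\alpha t>\beta>\<w,v_0\>+\alpha r,\quad\forall(u,t)\in\mathrm{epi}(E).
\end{equation*}
Letting $t\to\infty$ forces $\alpha\ge 0$. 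When $\alpha>0$, the left inequality rearranges into the affine minorant $E(u)\ge\beta/\alpha-\<w/\alpha,u\>$, which gives $E^*(-w/\alpha)\le -\beta/\alpha$, and substituting into the definition of $(E^*)^*(v_0)$ produces $(E^*)^*(v_0)\ge(\beta-\<w,v_0\>)/\alpha>r$, contradicting $(E^*)^*(v_0)<r$.

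The delicate case is $\alpha=0$, which can arise only when $v_0\notin\mathrm{dom}(E)$; here the separation degenerates into $\<w,u\>>\beta>\<w,v_0\>$ on $\mathrm{dom}(E)$, and the direct rearrangement used above is no longer available. To close this case I would combine $w$ with a fixed affine minorant $\<v_1,\cdot\>-c\le E(\cdot)$ obtained in part (\ref{properties_conjugate}): for every $\lambda>0$ the strict inequality $\<w,u\>>\beta$ on $\mathrm{dom}(E)$ yields $E^*(v_1-\lambda w)\le c-\lambda\beta$, and inserting this into the definition of $(E^*)^*(v_0)$ gives the lower bound $\<v_1,v_0\>-c+\lambda(\beta-\<w,v_0\>)$, which tends to $+\infty$ as $\lambda\to+\infty$ because $\beta-\<w,v_0\>>0$, a final contradiction. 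This horizontal-separation case, requiring the affine minorant from part (\ref{properties_conjugate}) to be recycled inside part (\ref{double_conjugate}), is the only subtle step; all other pieces reduce to routine bookkeeping with the Hahn--Banach theorem.
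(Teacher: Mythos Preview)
Your argument is correct and follows the standard route via Hahn--Banach separation from the closed convex epigraph, including the careful handling of the degenerate $\alpha=0$ case; this is essentially the proof one finds in the reference \cite{ET}. Note, however, that the paper itself does \emph{not} prove this lemma: immediately after stating Lemma~\ref{lem_standard_properties_Banach} and Lemma~\ref{lem_standard_properties_Hilbert}, the authors write that they ``use [these lemmas] without providing the proofs'' and direct the reader to \cite{ET}. So there is no paper proof to compare against; your write-up supplies exactly the kind of argument the cited reference contains.
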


For a functional defined on a real Hilbert space $H$ we adapt the inner
product $\<\cdot,\cdot\>_H$ to define its conjugate functional. To
distinguish from Banach spaces' case, let us change a notation.
For a functional $F:H\to \R\cup \{\infty\}$ being not identically
$\infty$ we define its conjugate functional $F^{\#}:H\to \R\cup\{\infty\}$
by
$$
F^{\#}(v):=\sup_{u\in H}\{\<v,u\>_H-F(u)\},\ \forall v\in H.
$$
Moreover, we define its subdifferential $\partial F:H\to 2^H$ by
$$
\partial F(u):=\{v\in H\ |\ \<v,w\>_H+F(u)\le F(u+w),\ \forall w\in H \}.
$$ 

\begin{lemma}\label{lem_standard_properties_Hilbert}
Assume that a functional $F:H\to \R\cup\{\infty\}$ is convex, lower
 semi-continuous and not identically $\infty$. The following statements
 are equivalent to each other.
\begin{enumerate}[(i)]
\item $v\in \partial F(u)$.
\item $u\in \partial F^{\#}(v)$.
\item $F(u)+F^{\#}(v)=\<u,v\>_H$.
\end{enumerate}
\end{lemma}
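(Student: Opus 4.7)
The plan is to establish the equivalences by proving the cycle (i) $\Leftrightarrow$ (iii) and (ii) $\Leftrightarrow$ (iii) directly from the definitions, together with the Fenchel--Young inequality. This is a standard result of convex analysis, but the only care needed is that the Hilbert-space conjugate $F^{\#}$ (defined via the inner product $\<\cdot,\cdot\>_H$) is used here rather than the Banach-space conjugate $E^*$ of Lemma \ref{lem_standard_properties_Banach}, so the cited properties must be transferred through the Riesz identification of $H$ with $H^*$.

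First I would prove (i) $\Rightarrow$ (iii). Assuming $v\in\partial F(u)$, the inequality $\<v,w\>_H+F(u)\le F(u+w)$ for all $w\in H$, together with the substitution $w=z-u$, yields $\<v,z\>_H-F(z)\le \<v,u\>_H-F(u)$ for all $z\in H$. Taking the supremum over $z$ produces $F^{\#}(v)\le \<u,v\>_H-F(u)$. The reverse inequality $F(u)+F^{\#}(v)\ge \<u,v\>_H$ is immediate from the definition of $F^{\#}$, so equality (iii) holds. Conversely, (iii) $\Rightarrow$ (i) by a one-line computation: for any $w\in H$,
\begin{equation*}
F(u+w)\ge \<v,u+w\>_H-F^{\#}(v)=\<v,u+w\>_H-\<u,v\>_H+F(u)=\<v,w\>_H+F(u).
\end{equation*}

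Next I would establish (ii) $\Leftrightarrow$ (iii) by observing that (iii) is symmetric in the roles of $u$ (in $F$) and $v$ (in $F^{\#}$), and then applying the argument above to the functional $F^{\#}$ in place of $F$. For this to work I need (a) that $F^{\#}$ is itself convex, lower semi-continuous, and not identically $\infty$, and (b) that $(F^{\#})^{\#}=F$. Both facts follow from Lemma \ref{lem_standard_properties_Banach} once we translate via the Riesz isometry $R:H\to H^*$ given by $\<Ru,\cdot\>=\<u,\cdot\>_H$: setting $E(u):=F(u)$ and noting that $E^*(Rv)=F^{\#}(v)$, the statements $E^*$ is convex, lsc, not $\equiv\infty$ and $(E^*)^*=E$ transfer directly to the corresponding statements for $F^{\#}$.

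There is no real obstacle here beyond bookkeeping; the only point that deserves a sentence of commentary is the identification of $F^{\#}$ with the Banach-space conjugate $E^*$ through the Riesz isomorphism, which is needed to invoke Lemma \ref{lem_standard_properties_Banach}\eqref{double_conjugate} and conclude $(F^{\#})^{\#}=F$. Once that is in place, the chain (i) $\Leftrightarrow$ (iii) $\Leftrightarrow$ (ii) yields all three equivalences.
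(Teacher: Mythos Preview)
Your proof is correct and follows the standard Fenchel--Moreau route. Note, however, that the paper does not actually supply a proof of this lemma: immediately after stating Lemma~\ref{lem_standard_properties_Banach} and Lemma~\ref{lem_standard_properties_Hilbert} the authors write ``We use Lemma \ref{lem_standard_properties_Banach} and Lemma \ref{lem_standard_properties_Hilbert} without providing the proofs. See e.g.\ \cite{ET} to verify them.'' So there is no in-paper argument to compare against; your write-up simply fills in what the paper delegates to the reference. One minor remark: the implication (iii) $\Rightarrow$ (ii) can be obtained directly from the definition of $F^{\#}$ without invoking $(F^{\#})^{\#}=F$ (for any $w$, $F^{\#}(v+w)\ge\<v+w,u\>_H-F(u)=\<u,w\>_H+F^{\#}(v)$ by (iii)), so the biconjugate identity is only genuinely needed for (ii) $\Rightarrow$ (iii).
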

We use Lemma \ref{lem_standard_properties_Banach} and Lemma
\ref{lem_standard_properties_Hilbert} without providing the proofs. See
e.g. \cite{ET} to verify them.

The conjugate functional $\s^{\#}:\R^d\to \R$ of $\s$ and its
subdifferential $\partial \s^{\#}(\cdot):\R^d\to 2^{\R^d}$ can be
calculated from the definitions.

\begin{lemma}\label{lem_sigma_characterization}
For any $\by\in\R^d$
\begin{align}
&\s^{\#}(\by)=\left\{\begin{array}{ll} 0 & \text{if }|\by|\le 1,\\
                                       \displaystyle\left(1-\frac{1}{p}\right)\mu^{-1/(p-1)}(|\by|-1)^{p/(p-1)}
					& \text{if }|\by|> 1,
\end{array}\right.\label{eq_sigma_conjugate}\\
&\partial \s^{\#}(\by)=\left\{\begin{array}{ll}\{\b0\} &\text{if
			}|\by|\le 1,\\
\left\{\mu^{-1/(p-1)}(|\by|-1)^{1/(p-1)}|\by|^{-1}\by\right\} &\text{if }|\by|>1.
\end{array}
\right.
\label{eq_subdif_sigma_conjugate}
\end{align}
\end{lemma}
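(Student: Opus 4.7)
The plan is to reduce the supremum defining $\s^{\#}$ to a one-dimensional optimization along the radial direction, and then to read off $\partial\s^{\#}$ from the already-computed $\partial\s$ via the Fenchel duality equivalence $\bx\in\partial\s^{\#}(\by)\iff\by\in\partial\s(\bx)$. This equivalence is valid on $\R^d$ since $\s$ is convex, continuous and not identically $\infty$, and since the Banach-dual pairing here coincides with the Euclidean inner product, so Lemma \ref{lem_standard_properties_Hilbert} applies.

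For the formula \eqref{eq_sigma_conjugate}, I would first observe that $\s(\bx)$ depends only on $|\bx|$, so by Cauchy--Schwarz the sup $\s^{\#}(\by)=\sup_\bx\{\<\by,\bx\>_{\R^d}-\s(\bx)\}$ is attained on the ray $\bx=t\,\by/|\by|$ with $t\ge 0$ (and at $\bx=\b0$ when $\by=\b0$, giving value $0$). This reduces the problem to maximizing $g(t):=t(|\by|-1)-(\mu/p)t^p$ over $t\ge 0$. When $|\by|\le 1$ the derivative $g'(t)=(|\by|-1)-\mu t^{p-1}$ is nonpositive throughout $t\ge 0$, so the sup equals $g(0)=0$. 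When $|\by|>1$ there is a unique critical point $t_*=\mu^{-1/(p-1)}(|\by|-1)^{1/(p-1)}$; substituting and using $\mu t_*^{p-1}=|\by|-1$ collapses the two $t_*$-terms into $(1-1/p)\,t_*(|\by|-1)$, which matches the displayed expression.

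For \eqref{eq_subdif_sigma_conjugate} I would combine the duality equivalence with the explicit formula for $\partial\s$ already recorded in the excerpt. If $|\by|\le 1$, then $\by\in\partial\s(\b0)$, while any $\bx\neq\b0$ with $\by\in\partial\s(\bx)$ would force $|\by|=1+\mu|\bx|^{p-1}>1$, a contradiction, so $\partial\s^{\#}(\by)=\{\b0\}$. If $|\by|>1$, the case $\bx=\b0$ is excluded since $\partial\s(\b0)\subset\{|\cdot|\le 1\}$, and the relation $\by=(|\bx|^{-1}+\mu|\bx|^{p-2})\bx$ forces $\bx$ to be a positive multiple of $\by$; taking norms yields $|\bx|=\mu^{-1/(p-1)}(|\by|-1)^{1/(p-1)}$, which produces exactly the asserted singleton.

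No serious obstacle is anticipated here; the only points that warrant some care are the radial reduction in the first step (ruling out non-radial maximizers via Cauchy--Schwarz together with the monotonicity of $r\mapsto r+(\mu/p)r^p$) and the verification that the duality argument produces singletons, rather than larger sets, in both cases $|\by|\le 1$ and $|\by|>1$.
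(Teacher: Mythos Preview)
Your proposal is correct and is exactly the kind of direct computation the paper has in mind: the paper offers no detailed argument for this lemma beyond the remark that $\s^{\#}$ and $\partial\s^{\#}$ ``can be calculated from the definitions.'' Your radial reduction via Cauchy--Schwarz for \eqref{eq_sigma_conjugate} and your use of the Fenchel equivalence (Lemma~\ref{lem_standard_properties_Hilbert}) together with the explicit $\partial\s$ for \eqref{eq_subdif_sigma_conjugate} are the natural way to fill in those details, and every step checks out.
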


\subsection{Proof for the periodic problem}
We are going to characterize the subdifferential of the periodic energy
$F_{\mathrm{per}}$. We introduce the real Banach space
$H_{\mathrm{ave}}^{-1}(\T^d)\times L^p(\T^d;\R^d)$ having the norm
defined by
$\|(f,\bg)\|:=\|f\|_{H_{\mathrm{ave}}^{-1}(\T^d)}+\|\bg\|_{L^p(\T^d;\R^d)}$.
Define functionals $Q,R:H_{\mathrm{ave}}^{-1}(\T^d)\times
L^p(\T^d;\R^d)$ $\to\R\cup \{\infty\}$ by
\begin{equation*}
\begin{split}
&Q((f,\bg)):=\int_{\Op}\s(\bg(\bx))d\bx,\\
&R((f,\bg)):=\left\{\begin{array}{ll}0 & \text{if }f\in
	      X_{\mathrm{per}}\text{ and }\bg=\nabla \wf,\\ 
\infty &\text{otherwise.}
\end{array}
\right.
\end{split}
\end{equation*}
One can check that $Q,R$ are convex, lower semi-continuous, and not
identically $\infty$.

We define a linear map $\Phi_{p/(p-1)}:H_{\mathrm{ave}}^{-1}(\T^d)\times
L^{p/(p-1)}(\T^d;\R^d)\to (H_{\mathrm{ave}}^{-1}(\T^d)\times L^p(\T^d;\R^d))^*$
by
\begin{equation*}
\begin{split}
&\<\Phi_{p/(p-1)}((u,\bv)),(f,\bg)\>:=\<u,f\>_{H_{\mathrm{ave}}^{-1}(\T^d)}+\int_{\Op}\<\bv(\bx),\bg(\bx)\>_{\R^d}d\bx,\\
&\quad \forall (u,\bv)\in H_{\mathrm{ave}}^{-1}(\T^d)\times
L^{p/(p-1)}(\T^d;\R^d),\ \forall (f,\bg) \in H_{\mathrm{ave}}^{-1}(\T^d)\times
L^{p}(\T^d;\R^d).
\end{split}
\end{equation*}
The map $\Phi_{p/(p-1)}$ is an isomorphism between these Banach spaces. 

In our proof characterizing the conjugate
functional $F^{\#}_{\mathrm{per}}$ $(:H_{\mathrm{ave}}^{-1}(\T^d)\to
\R\cup\{\infty\})$ is crucial to characterize $\partial
F_{\mathrm{per}}$. The first step is the following.

\begin{lemma}\label{lem_F_per_conjugate_pre}
For any $u\in H_{\mathrm{ave}}^{-1}(\T^d)$
\begin{equation}\label{eq_F_per_conjugate_pre}
F_{\mathrm{per}}^{\#}(u)=(Q+R)^*(\Phi_{p/(p-1)}((u,\b0))).
\end{equation}
\end{lemma}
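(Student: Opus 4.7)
The plan is to unfold both sides directly from their definitions and observe that they reduce to the same supremum; this is essentially a bookkeeping lemma whose sole purpose is to express $F_{\mathrm{per}}^{\#}$ in a form to which a duality result (such as Fenchel--Rockafellar) can subsequently be applied.

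First I would use the fact that the second component of $(u,\b0)$ is zero to reduce the pairing supplied by $\Phi_{p/(p-1)}$ to a single term:
\begin{equation*}
\<\Phi_{p/(p-1)}((u,\b0)),(f,\bg)\>=\<u,f\>_{H_{\mathrm{ave}}^{-1}(\T^d)},\quad\forall (f,\bg)\in H_{\mathrm{ave}}^{-1}(\T^d)\times L^p(\T^d;\R^d).
\end{equation*}
Consequently,
\begin{equation*}
(Q+R)^*(\Phi_{p/(p-1)}((u,\b0)))=\sup_{(f,\bg)}\{\<u,f\>_{H_{\mathrm{ave}}^{-1}(\T^d)}-Q((f,\bg))-R((f,\bg))\}.
\end{equation*}

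Next I would exploit the definition of $R$: the summand $-R((f,\bg))$ is $-\infty$ unless $f\in X_{\mathrm{per}}$ and $\bg=\nabla\wf$, so only such pairs can contribute to the supremum. On this set of admissible pairs $R$ vanishes and
\begin{equation*}
Q((f,\nabla\wf))=\int_{\Op}\s(\nabla\wf(\bx))d\bx=F_{\mathrm{per}}(f).
\end{equation*}
Therefore the right-hand side collapses to
\begin{equation*}
\sup_{f\in X_{\mathrm{per}}}\{\<u,f\>_{H_{\mathrm{ave}}^{-1}(\T^d)}-F_{\mathrm{per}}(f)\}.
\end{equation*}

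Finally I would note that, by the definition of $F_{\mathrm{per}}$, $F_{\mathrm{per}}(f)=\infty$ whenever $f\notin X_{\mathrm{per}}$, so enlarging the supremum to all of $H_{\mathrm{ave}}^{-1}(\T^d)$ adds only terms of value $-\infty$ and does not change the value. The resulting quantity is exactly $F_{\mathrm{per}}^{\#}(u)$, which yields \eqref{eq_F_per_conjugate_pre}. There is no substantive obstacle here; the point of isolating this identity is simply to prepare the ground for the later application of Banach-space duality to $(Q+R)^*$, which is where the real work of characterizing $F_{\mathrm{per}}^{\#}$ will take place.
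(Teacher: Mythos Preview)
Your argument is correct and is essentially the same as the paper's: both unfold the definition of $(Q+R)^*$, use the indicator $R$ to restrict the supremum to pairs $(f,\nabla\wf)$ with $f\in X_{\mathrm{per}}$, and then identify the remaining expression with $F_{\mathrm{per}}^{\#}(u)$. The only cosmetic difference is that the paper first writes the formula for $(Q+R)^*(\Phi_{p/(p-1)}((u,\bv)))$ with a general $\bv$ and then sets $\bv=\b0$, whereas you set $\bv=\b0$ from the outset.
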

\begin{proof}
Take any $(u,\bv)\in H_{\mathrm{ave}}^{-1}(\T^d)\times
L^{p/(p-1)}(\T^d;\R^d)$.
\begin{equation*}
\begin{split}
(Q&+R)^*(\Phi_{p/(p-1)}((u,\bv)))\\
&=\sup_{(f,\bg)\in H_{\mathrm{ave}}^{-1}(\T^d)\times
L^{p}(\T^d;\R^d)}\{\<\Phi_{p/(p-1)}((u,\bv)),(f,\bg)\>-(Q+R)((f,\bg))\}\\
&=\sup_{f\in
 X_{\mathrm{per}}}\left\{\<u,f\>_{H_{\mathrm{ave}}^{-1}(\T^d)}
 +\int_{\Op}\<\bv(\bx),\nabla \wf(\bx)\>_{\R^d}d\bx-\int_{\Op}\s(\nabla
 \wf(\bx))d\bx\right\},
\end{split}
\end{equation*}
from which the claimed equality follows.
\end{proof}

We will characterize the right hand side of
\eqref{eq_F_per_conjugate_pre} after characterizing $Q^*$ and $R^*$. 

\begin{lemma}\label{lem_Q_conjugate}
For any $(u,\bv)\in H_{\mathrm{ave}}^{-1}(\T^d)\times
L^{p/(p-1)}(\T^d;\R^d)$
\begin{equation*}
Q^*(\Phi_{p/(p-1)}((u,\bv)))=\left\{\begin{array}{ll}\displaystyle\int_{\Op}\s^{\#}(\bv(\bx))d\bx
			       &\text{if }u=0,\\
\infty&\text{otherwise.}
\end{array}
\right.
\end{equation*}
\end{lemma}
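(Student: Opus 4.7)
The plan is to unpack the definition of $Q^*$ and exploit the product structure of the underlying space so that the supremum splits into two independent suprema: one over the $H^{-1}_{\mathrm{ave}}(\T^d)$-component $f$, and one over the $L^p(\T^d;\R^d)$-component $\bg$. Once separated, each piece is straightforward.

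First I would write
\begin{equation*}
Q^*(\Phi_{p/(p-1)}((u,\bv)))=\sup_{f\in H^{-1}_{\mathrm{ave}}(\T^d)}\<u,f\>_{H^{-1}_{\mathrm{ave}}(\T^d)}+\sup_{\bg\in L^p(\T^d;\R^d)}\left\{\int_{\Op}\<\bv(\bx),\bg(\bx)\>_{\R^d}d\bx-\int_{\Op}\s(\bg(\bx))d\bx\right\},
\end{equation*}
noting that $Q$ only depends on $\bg$ and the pairing $\<\Phi_{p/(p-1)}((u,\bv)),(f,\bg)\>$ is a sum of a term involving only $(u,f)$ and a term involving only $(\bv,\bg)$, so the sup indeed splits. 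For the first supremum, if $u\ne 0$ then setting $f=tu$ and letting $t\to\infty$ gives $+\infty$; if $u=0$ the value is $0$. This handles the dichotomy in the statement.

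For the remaining supremum over $\bg$ in the case $u=0$, the pointwise Fenchel--Young inequality $\<\bv(\bx),\bg(\bx)\>_{\R^d}-\s(\bg(\bx))\le \s^{\#}(\bv(\bx))$ (integrated over $\Op$) immediately gives the upper bound
$$\sup_{\bg\in L^p(\T^d;\R^d)}\left\{\int_{\Op}\<\bv,\bg\>_{\R^d}d\bx-\int_{\Op}\s(\bg)d\bx\right\}\le \int_{\Op}\s^{\#}(\bv(\bx))d\bx.$$
For the reverse inequality I would exhibit an explicit maximizer by using the characterization of $\partial\s^{\#}$ in Lemma \ref{lem_sigma_characterization}: define
$$\bg(\bx):=\begin{cases}\b0 & \text{if }|\bv(\bx)|\le 1,\\ \mu^{-1/(p-1)}(|\bv(\bx)|-1)^{1/(p-1)}|\bv(\bx)|^{-1}\bv(\bx) & \text{if }|\bv(\bx)|>1.\end{cases}$$
This $\bg$ is measurable as a Borel function of $\bv$, and I would check it lies in $L^p(\T^d;\R^d)$ from the pointwise bound $|\bg(\bx)|^p=\mu^{-p/(p-1)}(|\bv(\bx)|-1)_+^{p/(p-1)}\le \mu^{-p/(p-1)}|\bv(\bx)|^{p/(p-1)}$, using $\bv\in L^{p/(p-1)}(\T^d;\R^d)$. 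By Lemma \ref{lem_sigma_characterization} combined with Lemma \ref{lem_standard_properties_Hilbert} (applied on the finite-dimensional Hilbert space $\R^d$ to the finite-valued convex function $\s$), this choice attains equality in the Fenchel--Young inequality pointwise a.e., so integration yields the lower bound and hence the claimed equality.

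The only potentially delicate step is the integrability check for the candidate $\bg$, which is resolved by the explicit formula for $\partial \s^{\#}$ producing exactly the exponent $1/(p-1)$ needed to convert the $L^{p/(p-1)}$-integrability of $\bv$ into $L^p$-integrability of $\bg$; analogously one verifies that $\int_{\Op}\s^{\#}(\bv)\,d\bx<\infty$ from the growth bound $\s^{\#}(\by)\le C|\by|^{p/(p-1)}$, so that the identity is a genuine equality of finite (or jointly infinite) quantities rather than a formal statement.
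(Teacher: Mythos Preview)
Your proposal is correct and follows essentially the same approach as the paper: split the supremum over $(f,\bg)$, dispose of the $f$-part via the dichotomy on $u$, bound the $\bg$-part above by pointwise Fenchel--Young, and attain the bound with the explicit maximizer $\bg(\bx)\in\partial\s^{\#}(\bv(\bx))$ from Lemma~\ref{lem_sigma_characterization}. Your added integrability checks for the candidate $\bg$ and for $\int_{\Op}\s^{\#}(\bv)\,d\bx$ are more explicit than the paper's version but do not change the substance of the argument.
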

\begin{proof}
Take any $(u,\bv)\in H_{\mathrm{ave}}^{-1}(\T^d)\times
L^{p/(p-1)}(\T^d;\R^d)$.
\begin{equation}\label{eq_Q_conjugate_obvious}
\begin{split}
&Q^*(\Phi_{p/(p-1)}((u,\bv)))\\
&=\sup_{(f,\bg)\in H_{\mathrm{ave}}^{-1}(\T^d)\times
L^{p}(\T^d;\R^d)}\{\<\Phi_{p/(p-1)}((u,\bv)),(f,\bg)\>-Q((f,\bg))\}\\
&=\sup_{(f,\bg)\in H_{\mathrm{ave}}^{-1}(\T^d)\times
 L^{p}(\T^d;\R^d)}\left\{\<u,f\>_{H_{\mathrm{ave}}^{-1}(\T^d)}+\int_{\Op}\<\bv(\bx),\bg(\bx)\>_{\R^d}d\bx-\int_{\Op}\s(\bg(\bx))d\bx\right\}\\
&=\left\{\begin{array}{ll}\displaystyle\sup_{\bg\in
   L^{p}(\T^d;\R^d)}\int_{\Op}(\<\bv(\bx),\bg(\bx)\>_{\R^d}-\s(\bg(\bx)))d\bx &\text{if }u=0,\\
\infty &\text{otherwise. }
\end{array}
\right.
\end{split}
\end{equation}

On one hand, it follows from the definition of $\s^{\#}$ that
\begin{equation}\label{eq_Q_conjugate_one}
\sup_{\bg\in
 L^{p}(\T^d;\R^d)}\int_{\Op}(\<\bv(\bx),\bg(\bx)\>_{\R^d}-\s(\bg(\bx)))d\bx\le
 \int_{\Op}\s^{\#}(\bv(\bx))d\bx.
\end{equation}

On the other hand, let us define $\bh\in L^{p}(\T^d;\R^d)$ by
\begin{equation*}
\bh(\bx):=\left\{\begin{array}{ll}\b0&\text{if }|\bv(\bx)|\le 1,\\
\mu^{-1/(p-1)}(|\bv(\bx)|-1)^{1/(p-1)}|\bv(\bx)|^{-1}\bv(\bx)&\text{if
 }|\bv(\bx)|> 1.
\end{array}
\right.
\end{equation*}
By \eqref{eq_subdif_sigma_conjugate}
\begin{equation}\label{eq_Q_conjugate_inclusion}
\bh(\bx)\in\partial \s^{\#}(\bv(\bx))\text{ a.e. }\bx\in\R^d.
\end{equation}
By Lemma \ref{lem_standard_properties_Hilbert} the inclusion
 \eqref{eq_Q_conjugate_inclusion} implies that
$$
\s^{\#}(\bv(\bx))=\<\bv(\bx), \bh(\bx)\>_{\R^d}-\s(\bh(\bx)) \text{
 a.e. }\bx\in\R^d,
$$
which leads to 
\begin{equation}\label{eq_Q_conjugate_theother}
\int_{\Op}\s^{\#}(\bv(\bx))d\bx\le \sup_{\bg\in
 L^p(\T^d;\R^d)}\int_{\Op}(\<\bv(\bx),\bg(\bx)\>_{\R^d}-\s(\bg(\bx)))d\bx.
\end{equation}
 By putting \eqref{eq_Q_conjugate_obvious}, \eqref{eq_Q_conjugate_one}
 and \eqref{eq_Q_conjugate_theother} together, we obtain the result.
\end{proof}

To characterize $R^*$ we need a couple of lemmas based on density properties
of smooth functions in the periodic Sobolev spaces.
\begin{lemma}\label{lem_integration_by_parts}
For any $ f\in W_{\mathrm{ave}}^{1,p}(\T^d)$ and $\bphi\in
 C^{\infty}(\T^d;\R^d)$
\begin{equation*}
\int_{\Op}f(\bx)\div\bphi(\bx)d\bx+\int_{\Op}\<\nabla
 f(\bx),\bphi(\bx)\>_{\R^d}d\bx=0.
\end{equation*}
\end{lemma}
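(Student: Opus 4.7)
The plan is to prove this by the standard density argument: establish the identity first for smooth periodic functions using the classical divergence theorem (where periodicity kills all boundary contributions), then extend to $W^{1,p}_{\mathrm{ave}}(\T^d)$ via Lemma~\ref{lem_density}.

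First I would take $f \in C^{\infty}_{\mathrm{ave}}(\T^d)$. Then $f\bphi \in C^{\infty}(\T^d;\R^d)$ is smooth up to the boundary of the cube $\Op$, so the classical divergence theorem yields
$$\int_{\Op} f(\bx)\div \bphi(\bx)\, d\bx + \int_{\Op}\<\nabla f(\bx),\bphi(\bx)\>_{\R^d} d\bx = \int_{\Op}\div(f\bphi)(\bx)\, d\bx = \int_{\partial\Op}\<f\bphi,\bn\>\, dS.$$
Because $\Op = \prod_{i=1}^d (0,\o_i)$ is a box and both $f$ and $\bphi$ agree on the opposite faces $\{x_i=0\}$ and $\{x_i=\o_i\}$ (with opposite outward normals), the boundary integral vanishes face-by-face, giving the identity on the smooth subspace.

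Next, for general $f \in W^{1,p}_{\mathrm{ave}}(\T^d)$, by Lemma~\ref{lem_density} I pick a sequence $\{f_n\}_{n=1}^{\infty}\subset C^{\infty}_{\mathrm{ave}}(\T^d)$ with $f_n \to f$ in $W^{1,p}_{\mathrm{ave}}(\T^d)$, meaning $f_n \to f$ in $L^p(\Op)$ and $\nabla f_n \to \nabla f$ in $L^p(\Op;\R^d)$. Since $\bphi,\div\bphi \in C^{\infty}(\T^d)$ are bounded and hence lie in $L^{p/(p-1)}(\T^d)$ and $L^{p/(p-1)}(\T^d;\R^d)$ respectively, H\"older's inequality shows
$$\int_{\Op} f_n \div\bphi\, d\bx \to \int_{\Op} f \div\bphi\, d\bx, \qquad \int_{\Op}\<\nabla f_n,\bphi\>_{\R^d}\, d\bx \to \int_{\Op}\<\nabla f,\bphi\>_{\R^d}\, d\bx.$$
Applying the already-proven smooth identity to each $f_n$ and passing to the limit gives the claim.

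There is no serious obstacle here; the only minor point requiring attention is making sure the density lemma applies in the class $W^{1,p}_{\mathrm{ave}}(\T^d)$ (which it does, by Lemma~\ref{lem_density}), and that periodicity of the approximants $f_n$ is what legitimizes invoking the smooth identity with no boundary contribution at each stage of the approximation.
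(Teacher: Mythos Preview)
Your proof is correct and follows exactly the approach the paper has in mind: the paper's own proof of this lemma is the single sentence ``Lemma~\ref{lem_density} justifies the equality,'' and you have simply written out the details of that density argument (smooth case via periodic cancellation of boundary terms, then pass to the limit).
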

\begin{proof}
Lemma \ref{lem_density} justifies the equality.
\end{proof}

\begin{lemma}\label{lem_density_div}
For any $\bv\in L^{p/(p-1)}(\T^d;\R^d)$ satisfying $\div \bv\in
 H_{\mathrm{ave}}^{1}(\T^d)$ there exists
 $\{\bv_n\}_{n=1}^{\infty}\subset C^{\infty}(\T^d;\R^d)$ such that
 as $n\to\infty$
\begin{equation*}
\begin{split}
&\bv_n\to \bv\text{ in }L^{p/(p-1)}(\T^d;\R^d),\\
&\div \bv_n\to \div \bv\text{ in }H_{\mathrm{ave}}^{1}(\T^d).
\end{split}
\end{equation*}
\end{lemma}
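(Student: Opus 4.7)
The plan is to apply the same mollification used in Lemma~\ref{lem_density}. With $\rho\in C_0^\infty(\R^d)$ as in \eqref{eq_mollifier_kernel}, I would set
\[
\bv_\delta(\bx):=\int_{\R^d}\delta^{-d}\rho\!\left(\frac{\bx-\by}{\delta}\right)\bv(\by)\,d\by,\quad \delta>0,
\]
interpreting $\bv$ as its $\o$-periodic extension to $\R^d$. Standard mollifier properties combined with periodicity of $\bv$ give $\bv_\delta\in C^\infty(\T^d;\R^d)$ and $\bv_\delta\to\bv$ in $L^{p/(p-1)}(\T^d;\R^d)$ as $\delta\searrow 0$, exactly as in Lemma~\ref{lem_density}.

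The content of the lemma rests on the commutation identity $\div\bv_\delta=(\div\bv)_\delta$ on $\T^d$. I would establish this distributionally: for any $\psi\in C^\infty(\T^d)$, the smoothness of $\bv_\delta$ together with Lemma~\ref{lem_integration_by_parts} gives
\[
\int_{\Op}\psi(\bx)\div\bv_\delta(\bx)\,d\bx=-\int_{\Op}\<\nabla\psi(\bx),\bv_\delta(\bx)\>_{\R^d}\,d\bx.
\]
Expanding $\bv_\delta$ and applying Fubini, one uses the joint periodicity of $\bv$ and $\psi$ to fold the integration back into $\Op$ and rewrite the right-hand side as $-\int_{\Op}\<\nabla(\rho_\delta\ast\psi)(\bx),\bv(\bx)\>_{\R^d}\,d\bx$, with $\rho_\delta\ast\psi\in C^\infty(\T^d)$. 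The hypothesis $\div\bv\in H^1_{\mathrm{ave}}(\T^d)$ identifies this as $\int_{\Op}(\rho_\delta\ast\psi)(\bx)\div\bv(\bx)\,d\bx$, and a final Fubini rearrangement produces $\int_{\Op}\psi(\bx)(\div\bv)_\delta(\bx)\,d\bx$. Validity for all $\psi\in C^\infty(\T^d)$ then yields the pointwise identity. Since mollification preserves the period-cell average by Fubini and periodicity, $(\div\bv)_\delta\in C^\infty_{\mathrm{ave}}(\T^d)$.

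With the commutation in hand, convergence in $H^1_{\mathrm{ave}}(\T^d)$ is immediate. Because $\div\bv\in H^1_{\mathrm{ave}}(\T^d)$, its gradient lies in $L^2(\T^d;\R^d)$, and a second application of the same commutation principle yields $\nabla\div\bv_\delta=(\nabla\div\bv)_\delta\to\nabla\div\bv$ in $L^2(\T^d;\R^d)$ by standard $L^2$-mollifier theory. Since $\|\nabla\cdot\|_{L^2(\T^d;\R^d)}$ is the chosen norm on $H^1_{\mathrm{ave}}(\T^d)$, this gives $\div\bv_\delta\to\div\bv$ in $H^1_{\mathrm{ave}}(\T^d)$, completing the proof. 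The main bookkeeping effort, which I expect to be the only nontrivial step, is the Fubini argument verifying $\div\bv_\delta=(\div\bv)_\delta$ for vector fields that are only $L^{p/(p-1)}$; it reduces cleanly to Lemma~\ref{lem_integration_by_parts} together with the periodicities involved and the compact support of $\rho$.
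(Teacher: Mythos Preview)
Your proposal is correct and follows exactly the paper's approach: the paper also defines $\bv_\delta$ by mollification with the same kernel $\rho$ as in Lemma~\ref{lem_density} and simply asserts that $\bv_\delta\in C^\infty(\T^d;\R^d)$ converges in the required sense. You have supplied considerably more detail than the paper does; note that the commutation $\div\bv_\delta=(\div\bv)_\delta$ follows even more directly from the distributional definition of $\div\bv$ on $\R^d$ (since $\rho_\delta(\bx-\cdot)\in C_0^\infty(\R^d)$ for each fixed $\bx$), without needing Lemma~\ref{lem_integration_by_parts}, and that lemma as stated applies only to average-free $f$, so if you keep your route you should restrict to $\psi\in C^\infty_{\mathrm{ave}}(\T^d)$ and observe both sides have zero period-cell mean.
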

\begin{proof}
As in Lemma \ref{lem_density} let us define a function
 $\bv_{\delta}:\R^d\to \R^d$ by
$$
\bv_{\delta}(\bx):=\int_{\R^d}\delta^{-d}\rho\left(\frac{\bx-\by}{\delta}\right)\bv(\by)d\by
$$
 by choosing a function $\rho\in
 C_0^{\infty}(\R^d)$ having the properties \eqref{eq_mollifier_kernel}
 and $\delta>0$. The function $\bv_{\delta}$ is contained in
 $C^{\infty}(\T^d;\R^d)$ and converges to $\bv$ in the way claimed above
 as $\delta\searrow 0$.
\end{proof}

Then we have
\begin{lemma}\label{lem_R_conjugate}
For any $(u,\bv)\in H_{\mathrm{ave}}^{-1}(\T^d)\times
L^{p/(p-1)}(\T^d;\R^d)$
\begin{equation*}
R^*(\Phi_{p/(p-1)}((u,\bv)))=\left\{\begin{array}{ll}0 & \text{if
			      }\div\bv (\in \cD'(\R^d))\text{ satisfies
			       }\div\bv = (-\Dp)^{-1}u,\\
\infty &\text{otherwise. }
\end{array}
\right.
\end{equation*}
\end{lemma}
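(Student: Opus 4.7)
My approach is to unfold the sup defining $R^*$ and then split into two cases according to whether the distribution $\div\bv$ coincides with $\psi:=(-\Dp)^{-1}u\in H_{\mathrm{ave}}^1(\T^d)$. Noting that $R((f,\bg))=\infty$ unless $f\in X_{\mathrm{per}}$ and $\bg=\nabla\wf$, and rewriting $\<u,f\>_{H^{-1}_{\mathrm{ave}}(\T^d)}=\<f,\psi\>$, I would first reduce the problem to
\begin{equation*}
R^*(\Phi_{p/(p-1)}((u,\bv)))=\sup_{f\in X_{\mathrm{per}}}\left\{\<f,\psi\>+\int_{\Op}\<\bv(\bx),\nabla\wf(\bx)\>_{\R^d}d\bx\right\}.
\end{equation*}

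In the equality case $\div\bv=\psi$, the assumption places $\div\bv$ in $H_{\mathrm{ave}}^1(\T^d)$, so Lemma \ref{lem_density_div} produces $\bv_n\in C^\infty(\T^d;\R^d)$ with $\bv_n\to\bv$ in $L^{p/(p-1)}(\T^d;\R^d)$ and $\div\bv_n\to\psi$ in $H_{\mathrm{ave}}^1(\T^d)$. Each $\div\bv_n$ has zero mean by periodicity, hence lies in $C^\infty_{\mathrm{ave}}(\T^d)$ and qualifies as a test sequence in the definition of $X_{\mathrm{per}}$; this gives $\<f,\psi\>=\lim_n\int_{\Op}\wf\,\div\bv_n\,d\bx$. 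I would then apply Lemma \ref{lem_integration_by_parts} to rewrite each integral as $-\int_{\Op}\<\nabla\wf,\bv_n\>_{\R^d}d\bx$ and pass to the limit by H\"older's inequality with exponents $p$ and $p/(p-1)$. The bracketed expression is then identically zero on $X_{\mathrm{per}}$, so $R^*=0$.

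For the opposite case $\psi\neq\div\bv$ in $\cD'(\R^d)$, I need a rich supply of test elements. For any $h\in C^\infty_{\mathrm{ave}}(\T^d)$, the map $\phi\mapsto\int_{\Op}h\phi\,d\bx$ is bounded on $H_{\mathrm{ave}}^1(\T^d)$ via Poincar\'e and defines some $f_h\in H_{\mathrm{ave}}^{-1}(\T^d)$; the $L^2$-convergence of any admissible $\phi_n$ then shows $f_h\in X_{\mathrm{per}}$ with $\widetilde{f_h}=h$. Consequently
\begin{equation*}
\<f_h,\psi\>+\int_{\Op}\<\bv,\nabla h\>_{\R^d}d\bx=\int_{\Op}h\psi\,d\bx-\<\div\bv,h\>=\<\psi-\div\bv,h\>.
\end{equation*}
Both $\psi$ and $\div\bv$ have zero mean (the latter by pairing with the constant $1$), so $\psi-\div\bv$ is a nonzero mean-zero periodic distribution; some $h\in C^\infty(\T^d)$ pairs nontrivially with it, and subtracting its average keeps $h$ in $C^\infty_{\mathrm{ave}}$ without changing the pairing. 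Scaling $h\mapsto th$ then drives the sup to $+\infty$. The main technical difficulty will be the equality case, where the distributional pairing $\<f,\psi\>$ and the periodic integration by parts must be reconciled through a single approximating sequence; the simultaneous convergence of $\bv_n$ in $L^{p/(p-1)}$ and of $\div\bv_n$ in $H_{\mathrm{ave}}^1$ provided by Lemma \ref{lem_density_div} is exactly what makes this possible.
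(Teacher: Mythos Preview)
Your proof is correct and, for the case $\div\bv=(-\Dp)^{-1}u$, essentially identical to the paper's: both use Lemma~\ref{lem_density_div} to approximate $\bv$ in the graph norm and Lemma~\ref{lem_integration_by_parts} to cancel the two terms.

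For the case $\div\bv\neq(-\Dp)^{-1}u$ the two arguments diverge. The paper lower-bounds the supremum first by $C^\infty_{\mathrm{ave}}(\T^d)$, then further by $C^\infty_0(\Op+\by)$ on every translated fundamental cell; this yields the distributional identity $\div\bv=(-\Dp)^{-1}u$ on each $\Op+\by$, and a periodic partition-of-unity argument is then used to patch these local identities into the identity on all of $\R^d$. You instead test directly with $h\in C^\infty_{\mathrm{ave}}(\T^d)$ and invoke the (standard) fact that periodic smooth functions separate periodic distributions. Your route is shorter and avoids the translate-and-partition machinery, but it leans on the identification of periodic distributions on $\R^d$ with distributions on $\T^d$ (equivalently, on a periodization-of-test-functions argument), which the paper chooses to carry out explicitly rather than cite. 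Either approach reaches the same conclusion; the paper's version is more self-contained, yours is more concise.
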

\begin{proof}
Take any $(u,\bv)\in H_{\mathrm{ave}}^{-1}(\T^d)\times
L^{p/(p-1)}(\T^d;\R^d)$.
\begin{equation}\label{eq_R_conjugate_pre}
\begin{split}
&R^*(\Phi_{p/(p-1)}((u,\bv)))\\
&=\sup_{(f,\bg)\in H_{\mathrm{ave}}^{-1}(\T^d)\times
L^{p}(\T^d;\R^d)}\{\<\Phi_{p/(p-1)}((u,\bv)),(f,\bg)\>-R((f,\bg))\}\\
&=\sup_{f\in
 X_{\mathrm{per}}}\left\{\<u,f\>_{H_{\mathrm{ave}}^{-1}(\T^d)}+\int_{\Op}\<\bv(\bx),\nabla \wf(\bx)\>_{\R^d}d\bx\right\}\\
&\ge \sup_{\phi\in
 C^{\infty}_{\mathrm{ave}}(\T^d)}\left\{\int_{\Op}(-\Dp)^{-1}u(\bx)\cdot\phi(\bx)d\bx+\int_{\Op}\<\bv(\bx),\nabla \phi(\bx)\>_{\R^d}d\bx\right\}\\
&=\sup_{\by\in\R^d}\sup_{\phi\in
 C^{\infty}_{\mathrm{ave}}(\T^d)}\int_{\Op+\by}((-\Dp)^{-1}u(\bx)\cdot\phi(\bx)+\<\bv(\bx),\nabla\phi(\bx)\>_{\R^d})d\bx\\
&\ge \sup_{\by\in\R^d}\sup_{\phi\in
 C^{\infty}_{0}(\Op+\by)}\int_{\Op+\by}((-\Dp)^{-1}u(\bx)\cdot\phi(\bx)+\<\bv(\bx),\nabla\phi(\bx)\>_{\R^d})d\bx\\
&=\left\{\begin{array}{ll}0 & \text{if }\div \bv(\in\cD'(\Op+\by))\text{
   satisfies }\div\bv =(-\Dp)^{-1}u|_{\Op+\by}\ (\forall \by\in\R^d),\\
\infty &\text{otherwise,}  
\end{array} \right.
\end{split}
\end{equation}
where we have used the fact that
 $\int_{\Op+\by}(-\Dp)^{-1}u(\bx)d\bx=0$.
From the inequality \eqref{eq_R_conjugate_pre} we can deduce that
\begin{equation}\label{eq_R_conjugate_one}
R^*(\Phi_{p/(p-1)}((u,\bv)))\ge \left\{\begin{array}{ll}0&\text{if }\div
				 \bv(\in\cD'(\R^d))\text{ satisfies
				  }\div \bv=(-\Dp)^{-1}u,\\
\infty&\text{otherwise.}
\end{array}
\right.
\end{equation}
To confirm this, assume that $\div \bv(\in\cD'(\Op+\by))$ satisfies
$\div\bv =(-\Dp)^{-1}u|_{\Op+\by}$ $(\forall \by\in\R^d)$. For any
 proposition P let $1_{\mathrm{P}}$ $(\in\{0,1\})$ be defined by
$$
1_{\mathrm{P}}:=\left\{\begin{array}{ll} 1 & \text{if P is true,}\\
                          0 & \text{otherwise.}
\end{array}\right.
$$
Take a function $\eta\in C^{\infty}_0(\R)$ such that $0\le \eta(x)\le
 1$ $(\forall x\in \R)$, $\eta(x)=0$ if $|x|\ge 1$ and
 $\int_{\R}\eta(x)dx=1$. By using $\eta$ we define functions $f_{i,n}\in
 C_0^{\infty}(\R)$ $(i\in\{1,\cdots,d\},n\in\Z)$ by
$$
f_{i,n}(x):=\int_{\R}\left(\frac{\o_i}{8}\right)^{-1}\eta\left(\frac{x-y}{\o_i/8}\right)1_{y\in
 [0,\o_i/2)+\o_in/2}dy.
$$
Remark that $\supp f_{i,n}\subset (0,\o_i)+\o_in/2-\o_i/4$ and
 $\sum_{n\in\Z}f_{i,n}(x)=1$ $(\forall x\in\R)$. For any $\bn$
 $(=(n_1,\cdots,n_d))\in\Z^d$ set
 $f_{\bn}(\bx):=\prod_{i=1}^df_{i,n_i}(x_i)$. We see that
 $f_{\bn}\in C_0^{\infty}(\Op+\by_{\bn})$ and
 $\sum_{\bn\in\Z^d}f_{\bn}(\bx)=1$ $(\forall \bx\in \R^d)$, where
 $\by_{\bn}:=(\o_1n_1/2-\o_1/4,\o_2n_2/2-\o_2/4,\cdots,\o_dn_d/2-\o_d/4)$ $(\in\R^d)$.
For any $\phi\in C_0^{\infty}(\R^d)$ there exists $N\in\N$ such that
$$
\supp \phi\subset \bigcup_{\bn\in\Z^d\atop |n_i|\le N\ (i=1,\cdots,d)}(\Op+\by_{\bn}).
$$
Then for any $\bx\in\supp\phi$
$$
\sum_{\bn\in\Z^d\atop |n_i|\le N+1\ (i=1,\cdots,d)}f_{\bn}(\bx)=1.
$$
Thus, by assumption
\begin{equation*}
\begin{split}
\int_{\R^d}\<\bv(\bx),-\nabla
 \phi(\bx)\>_{\R^d}d\bx&=\sum_{\bn\in\Z^d\atop |n_i|\le N+1\
 (i=1,\cdots,d)}\int_{\Op+\by_{\bn}}\<\bv(\bx),-\nabla(f_{\bn}(\bx)\phi(\bx))\>_{\R^d}d\bx\\
&=\sum_{\bn\in\Z^d\atop |n_i|\le N+1\
 (i=1,\cdots,d)}\int_{\Op+\by_{\bn}}(-\Dp)^{-1}u(\bx)\cdot f_{\bn}(\bx)\phi(\bx)d\bx\\
&=\int_{\R^d}(-\Dp)^{-1}u(\bx)\cdot \phi(\bx)d\bx.
\end{split}
\end{equation*}
Hence, $\div \bv$ $(\in\cD'(\R^d))$ satisfies $\div\bv
 =(-\Dp)^{-1}u$, which means that the right hand side of
 \eqref{eq_R_conjugate_pre} is larger than equal to that of
 \eqref{eq_R_conjugate_one}, resulting in the inequality
 \eqref{eq_R_conjugate_one}.

To show that the inequality \eqref{eq_R_conjugate_one} is actually the
 equality, let us assume that $\div\bv=(-\Dp)^{-1}u$. By Lemma
 \ref{lem_density_div} we can take a sequence $\{\bv_n\}_{n=1}^{\infty}$
 $(\subset C^{\infty}(\T^d;\R^d))$ such that $\bv_n\to \bv$ in
 $L^{p/(p-1)}(\T^d;\R^d)$, $\div \bv_n\to \div \bv$ in
 $H_{\mathrm{ave}}^{1}(\T^d)$ as $n\to \infty$. Applying Lemma
 \ref{lem_integration_by_parts}, we observe that 
\begin{equation*}
\begin{split}
R^*(\Phi_{p/(p-1)}((u,\bv)))&=\sup_{f\in X_{\mathrm{per}}}\left\{\<\div
 \bv,f\>+\int_{\Op}\<\bv(\bx),\nabla\wf(\bx)\>_{\R^d}d\bx\right\}\\
&=\sup_{f\in X_{\mathrm{per}}}\lim_{n\to \infty}\left\{\int_{\Op}(\div
 \bv_n(\bx)\wf(\bx)+\<\bv_n(\bx),\nabla\wf(\bx)\>_{\R^d})d\bx\right\}\\
&=0,
\end{split}
\end{equation*}
which concludes the proof.
\end{proof}

For any $u\in H_{\mathrm{ave}}^{-1}(\T^d)$ let $Y_{\mathrm{per}}(u)$
$(\subset  L^{p/(p-1)}(\T^d;\R^d))$ be defined by
$$
Y_{\mathrm{per}}(u):=\{\bs\in
 L^{p/(p-1)}(\T^d;\R^d)\ |\  \div \bs=(-\Dp)^{-1}u\}.
$$

Using Lemma \ref{lem_Q_conjugate} and Lemma \ref{lem_R_conjugate}, we
show the following.
\begin{lemma}\label{lem_Q_R_conjugate}
For any $(u,\bv)\in H_{\mathrm{ave}}^{-1}(\T^d)\times
 L^{p/(p-1)}(\T^d;\R^d)$
$$
(Q+R)^*(\Phi_{p/(p-1)}((u,\bv)))=\left\{\begin{array}{ll}
\displaystyle\min_{\bs\in
 Y_{\mathrm{per}}(u)}\int_{\Op}\s^{\#}(\bv(\bx)-\bs(\bx))d\bx&\text{if
  }Y_{\mathrm{per}}(u)\neq\emptyset,\\
\infty&\text{otherwise.}
\end{array}
\right.
$$
\end{lemma}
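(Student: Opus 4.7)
The plan is to interpret the claim as a Fenchel--Rockafellar duality identity expressing $(Q+R)^{*}(w)$ as the inf-convolution $\inf_{w=w_{1}+w_{2}}\{Q^{*}(w_{1})+R^{*}(w_{2})\}$, and then to substitute the explicit formulas for $Q^{*}$ and $R^{*}$ obtained in Lemmas \ref{lem_Q_conjugate}--\ref{lem_R_conjugate}. I would prove the two inequalities separately.

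The easy half ``$\le$'' uses only the Young--Fenchel inequality applied termwise. Since $\Phi_{p/(p-1)}$ is linear, for any $\bs\in L^{p/(p-1)}(\T^{d};\R^{d})$ the splitting
$$\Phi_{p/(p-1)}((u,\bv))=\Phi_{p/(p-1)}((0,\bv-\bs))+\Phi_{p/(p-1)}((u,\bs))$$
leads directly to
$$(Q+R)^{*}(\Phi_{p/(p-1)}((u,\bv)))\le Q^{*}(\Phi_{p/(p-1)}((0,\bv-\bs)))+R^{*}(\Phi_{p/(p-1)}((u,\bs))).$$
Lemma \ref{lem_Q_conjugate} reduces the first summand to $\int_{\Op}\s^{\#}(\bv(\bx)-\bs(\bx))\,d\bx$, while Lemma \ref{lem_R_conjugate} makes the second summand $0$ when $\bs\in Y_{\mathrm{per}}(u)$ and $\infty$ otherwise. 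Taking the infimum over $\bs\in Y_{\mathrm{per}}(u)$ yields ``$\le$'' in the nonempty case, and the inequality is vacuous otherwise.

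For ``$\ge$'' together with the attainment of the minimum I would invoke the classical Fenchel--Rockafellar duality theorem (see, e.g., \cite{ET}). Its hypotheses hold here because $Q$ depends on $(f,\bg)$ only through $\bg$, and since $\s$ is continuous with growth controlled by $|\cdot|^{p}$, the Nemytskii-type map $\bg\mapsto\int_{\Op}\s(\bg)\,d\bx$ is finite and continuous on $L^{p}(\T^{d};\R^{d})$; hence $Q$ is finite and continuous everywhere on the product Banach space, in particular at $(0,\b0)\in\mathrm{dom}(R)$. The theorem then delivers
$$(Q+R)^{*}(w)=\min_{w=w_{1}+w_{2}}\{Q^{*}(w_{1})+R^{*}(w_{2})\},$$
with the minimum attained. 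Transferring back to the predual via $\Phi_{p/(p-1)}$, any finite minimizing decomposition $(u_{1},\bv_{1})+(u_{2},\bv_{2})=(u,\bv)$ is forced by Lemma \ref{lem_Q_conjugate} to satisfy $u_{1}=0$ and by Lemma \ref{lem_R_conjugate} to satisfy $\div\bv_{2}=(-\Dp)^{-1}u$. Thus $Y_{\mathrm{per}}(u)\neq\emptyset$ is necessary in the finite case, and setting $\bs:=\bv_{2}\in Y_{\mathrm{per}}(u)$ the minimum is realized by $\int_{\Op}\s^{\#}(\bv-\bs)\,d\bx$. If $Y_{\mathrm{per}}(u)=\emptyset$, every decomposition has an infinite summand, so $(Q+R)^{*}(\Phi_{p/(p-1)}((u,\bv)))=\infty$.

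The main obstacle is the attainment in the inf-convolution, i.e., the passage from $\inf$ to $\min$ on the right-hand side: this is precisely what the Fenchel--Rockafellar theorem provides under the continuity hypothesis, so verifying that $Q$ is continuous at a point of $\mathrm{dom}(R)$ is the central checkpoint. The ``$\le$'' direction and the translation between the dual decomposition and the parameter $\bs\in Y_{\mathrm{per}}(u)$ are then mechanical consequences of the Young--Fenchel inequality together with the formulas from the two preceding lemmas.
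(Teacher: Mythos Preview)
Your proposal is correct, but it takes a different route from the paper's own proof. You invoke the Fenchel--Rockafellar duality theorem as a black box, verifying the continuity hypothesis on $Q$ (which is indeed satisfied, since the Nemytskii map $\bg\mapsto\int_{\Op}\s(\bg)$ is finite and continuous on $L^{p}$ by the $p$-growth of $\s$, and $(0,\b0)\in\mathrm{dom}\,R$); this delivers the inf-convolution identity with attainment in one stroke. The paper, by contrast, deliberately avoids this theorem---see the Remark immediately preceding its proof, which acknowledges that a direct application of \cite[Proposition~3.4]{A} would shorten the argument---and instead defines the inf-convolution $S$ of $Q^{*}$ and $R^{*}$, proves by hand (via Mazur's theorem and weak compactness in $L^{p/(p-1)}$) that $S$ is convex, lower semi-continuous and proper, and then uses only the biduality $(S^{*})^{*}=S$ together with the elementary computation $S^{*}=Q+R$ to conclude. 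The attainment of the minimum is obtained in the paper's proof as a by-product of the lower semi-continuity argument. Your approach is cleaner and shorter; the paper's is more self-contained, relying only on Lemma~\ref{lem_standard_properties_Banach} and Lemma~\ref{lem_standard_properties_Hilbert}, which is the stated aim of that section.
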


\begin{remark}
A direct application of the general theorem \cite[\mbox{Proposition
 3.4}]{A} on inf-convolution can shorten the proof of Lemma
 \ref{lem_Q_R_conjugate} below. However, we prove the lemma by referring
 only to the basic facts Lemma \ref{lem_standard_properties_Banach} and Lemma
 \ref{lem_standard_properties_Hilbert} for self-containedness of the paper.
\end{remark}

\begin{proof}
Define a functional $S:H_{\mathrm{ave}}^{-1}(\T^d)\times
 L^{p/(p-1)}(\T^d;\R^d)\to \R\cup\{\infty\}$ by
\begin{equation}\label{eq_S_def}
\begin{split}
&S((u,\bv))\\
&:=\inf_{(r,\bs)\in H_{\mathrm{ave}}^{-1}(\T^d)\times
 L^{p/(p-1)}(\T^d;\R^d)}\{Q^*(\Phi_{p/(p-1)}((u,\bv)-(r,\bs)))+R^*(\Phi_{p/(p-1)}((r,\bs)))\},\\
&\qquad \forall (u,\bv)\in H_{\mathrm{ave}}^{-1}(\T^d)\times
 L^{p/(p-1)}(\T^d;\R^d).
\end{split}
\end{equation}
Lemma \ref{lem_Q_conjugate} and Lemma \ref{lem_R_conjugate} imply that
\begin{equation}\label{eq_S_next}
S((u,\bv))=\left\{\begin{array}{ll}
\displaystyle\inf_{\bs\in
 Y_{\mathrm{per}}(u)}\int_{\Op}\s^{\#}(\bv(\bx)-\bs(\bx))d\bx&\text{if
  }Y_{\mathrm{per}}(u)\neq\emptyset,\\
\infty&\text{otherwise.}
\end{array}
\right.
\end{equation}
We need to show that
 $S((u,\bv))=(Q+R)^*(\Phi_{p/(p-1)}((u,\bv)))$.

By using the convexity of $Q^*$ and $R^*$ in \eqref{eq_S_def} we can
 prove that $S$ is convex as well. Moreover, from \eqref{eq_S_next} and
 \eqref{eq_sigma_conjugate} we see that $S$ is not identically
 $\infty$. To show the lower semi-continuity of $S$, let us assume that
 $(u_n,\bv_n)$ converges to $(u,\bv)$ in $H_{\mathrm{ave}}^{-1}(\T^d)\times
 L^{p/(p-1)}(\T^d;\R^d)$ as $n\to \infty$ and there is $\lambda\ge 0$
 such that $S((u_n,\bv_n))\le \lambda$ $(\forall n\in \N)$. The equality
 \eqref{eq_S_next} ensures that there exists
 $\{\bs_{i}^n\}_{i=1}^{\infty}$ $(\subset L^{p/(p-1)}(\T^d;\R^d))$
 such that $\div \bs_{i}^n=(-\Dp)^{-1}u_n$ $(\forall i\in\N)$ and 
$$
\lim_{i\to\infty}\int_{\Op}\s^{\#}(\bv_n(\bx)-\bs_i^n(\bx))d\bx=\inf_{\bs\in
 Y_{\mathrm{per}}(u_n)}\int_{\Op}\s^{\#}(\bv_n(\bx)-\bs(\bx))d\bx.
$$
There exists $\lambda'\ge 0$ such that 
$$
\int_{\Op}\s^{\#}(\bv_n(\bx)-\bs_i^n(\bx))d\bx\le \lambda+\lambda',\
 \forall i\in\N.
$$
By this inequality and \eqref{eq_sigma_conjugate} $\{\bs_i^n\}_{i=1}^{\infty}$ is bounded in
 $L^{p/(p-1)}(\T^d;\R^d)$. Thus, we can extract a subsequence
 $\{\bs_{i(l)}^n\}_{l=1}^{\infty}$ from $\{\bs_i^n\}_{i=1}^{\infty}$ so
 that $\bs_{i(l)}^n$ weakly converges to some $\bs_n$
 in $L^{p/(p-1)}(\T^d;\R^d)$ as $l\to \infty$. Moreover, Mazur's theorem
 for the space $L^{p/(p-1)}(\T^d;\R^d)$ $\times\R$ guarantees that for any
 $k\in\N$ there exist $l_k\in\N$ and $\beta_j^k\in[0,1]$
 $(j=1,\cdots,l_k)$ satisfying $\sum_{j=1}^{l_k}\beta_j^k=1$ such that as $k\to \infty$
\begin{equation*}
\begin{split}
&\sum_{j=1}^{l_k}\beta_{j}^k\bs_{i(j)}^n\to \bs_n\text{ in
 }L^{p/(p-1)}(\T^d;\R^d),\\
&\sum_{j=1}^{l_k}\beta_{j}^k\int_{\Op}\s^{\#}(\bv_n(\bx)-\bs_{i(j)}^n(\bx))d\bx\to
 \inf_{\bs\in Y_{\mathrm{per}}(u_n) }\int_{\Op}\s^{\#}(\bv_n(\bx)-\bs(\bx))d\bx.\end{split}
\end{equation*}
Furthermore, by extracting a subsequence from $\{\sum_{j=1}^{l_k}\beta_{j}^k\bs_{i(j)}^n\}_{k=1}^{\infty}$ we may assume that
 as $k\to\infty$
$$
\sum_{j=1}^{l_k}\beta_{j}^k\bs_{i(j)}^n(\bx)\to \bs_n(\bx)\text{ a.e.
 }\bx\in\R^d,
$$
where we used the same notation for simplicity. Then, by Fatou's lemma
 and the convexity of $\s^{\#}$ we have that 
\begin{equation}\label{eq_s_n_smaller}
\begin{split}
\int_{\Op}\s^{\#}(\bv_n(\bx)-\bs_n(\bx))d\bx&\le \liminf_{k\to\infty}\sum_{j=1}^{l_k}\beta_{j}^k\int_{\Op}\s^{\#}(\bv_n(\bx)-\bs_{i(j)}^n(\bx))d\bx\\
&=\inf_{\bs\in Y_{\mathrm{per}}(u_n)}\int_{\Op}\s^{\#}(\bv_n(\bx)-\bs(\bx))d\bx.\end{split}
\end{equation}
Since the set $Y_{\mathrm{per}}(w)$ is a
 convex, closed subset of $L^{p/(p-1)}(\T^d;\R^d)$ for any $w\in
 H_{\mathrm{ave}}^{-1}(\T^d)$ with $Y_{\mathrm{per}}(w)\neq \emptyset$, we obtain
\begin{equation}\label{eq_s_n_div}
\div \bs_n=(-\Dp)^{-1}u_n.
\end{equation}
By \eqref{eq_s_n_smaller}, \eqref{eq_s_n_div} we have that $\bs_n\in
 Y_{\mathrm{per}}(u_n)$ and 
\begin{equation}\label{eq_v_s_lambda}
\int_{\Op}\s^{\#}(\bv_n(\bx)-\bs_n(\bx))d\bx=\min_{\bs\in Y_{\mathrm{per}}(u_n)
}\int_{\Op}\s^{\#}(\bv_n(\bx)-\bs(\bx))d\bx\le\lambda.
\end{equation}

It follows from \eqref{eq_v_s_lambda} that $\{\bs_n\}_{n=1}^{\infty}$ is bounded in
 $L^{p/(p-1)}(\T^d;\R^d)$. By using Mazur's theorem for the space
 $H_{\mathrm{ave}}^{-1}(\T^d)\times L^{p/(p-1)}(\T^d;\R^d)\times
 L^{p/(p-1)}(\T^d;\R^d)$ one can show that there are sequences
 $\{n(j)\}_{j=1}^{\infty}$, $\{m_k\}_{k=1}^{\infty}$ $(\subset \N)$,
 $\g_j^k\in[0,1]$ $(j=1,\cdots, m_k)$ with $\sum_{j=1}^{m_k}\g_j^k=1$
 $(\forall k\in\N)$ and $\bs\in L^{p/(p-1)}(\T^d;\R^d)$ such that as $k\to\infty$
\begin{equation*}
\begin{split}
&\sum_{j=1}^{m_k}\g_j^k(u_{n(j)},\bv_{n(j)})\to (u,\bv)\text{ in
 }H_{\mathrm{ave}}^{-1}(\T^d)\times L^{p/(p-1)}(\T^d;\R^d),\\
&\sum_{j=1}^{m_k}\g_{j}^k\bs_{n(j)}\to \bs\text{ in
 }L^{p/(p-1)}(\T^d;\R^d).
\end{split}
\end{equation*}
Moreover, by taking a subsequence if necessary we may claim that as
 $k\to \infty$
$$
\sum_{j=1}^{m_k}\g_j^k\bv_{n(j)}(\bx)\to \bv(\bx),\
 \sum_{j=1}^{m_k}\g_{j}^k\bs_{n(j)}(\bx)\to \bs(\bx)\text{ a.e. }\bx\in\R^d.
$$
Then, Fatou's lemma, the convexity of $\s^{\#}$ and \eqref{eq_v_s_lambda} prove that
\begin{equation}\label{eq_s_smaller}
\int_{\Op}\s^{\#}(\bv(\bx)-\bs(\bx))d\bx\le \liminf_{k\to
 \infty}\sum_{j=1}^{m_k}\g_j^k\int_{\Op}\s^{\#}(\bv_{n(j)}(\bx)-\bs_{n(j)}(\bx))d\bx\le
 \lambda.
\end{equation}
Note that for any $\phi\in C_0^{\infty}(\R^d)$
\begin{equation*}
\begin{split}
\int_{\R^d}\<\bs(\bx),-\nabla \phi(\bx)\>_{\R^d}d\bx&=\lim_{k\to
 \infty}\sum_{j=1}^{m_k}\g_{j}^k\int_{\R^d}\<\bs_{n(j)}(\bx),-\nabla
 \phi(\bx)\>_{\R^d}d\bx\\
&=\lim_{k\to\infty}\sum_{j=1}^{m_k}\g_{j}^k\int_{\R^d}(-\Dp)^{-1}u_{n(j)}(\bx)\cdot
 \phi(\bx)d\bx\\
&=\int_{\R^d}(-\Dp)^{-1}u(\bx)\cdot \phi(\bx)d\bx,
\end{split}
\end{equation*}
which means that
\begin{equation}\label{eq_s_div}
\div \bs = (-\Dp)^{-1}u.
\end{equation}
By combining \eqref{eq_s_smaller}, \eqref{eq_s_div} with
 \eqref{eq_S_next} we arrive at $S((u,\bv))\le \lambda$, which concludes
 that $S$ is lower semi-continuous.

Since $S:H_{\mathrm{ave}}^{-1}(\T^d)\times
 L^{p/(p-1)}(\T^d;\R^d)\to\R\cup\{\infty\}$ is convex, lower
 semi-continuous and not identically $\infty$, we can apply Lemma
 \ref{lem_standard_properties_Banach} (\ref{double_conjugate}) to deduce
 that
\begin{equation}\label{eq_S_double_conjugate}
(S^*)^*((u,\bv))=S((u,\bv)),\ \forall (u,\bv)\in
 H_{\mathrm{ave}}^{-1}(\T^d)\times L^{p/(p-1)}(\T^d;\R^d).
\end{equation}
 
In order to characterize $S^*$ $(:(H_{\mathrm{ave}}^{-1}(\T^d)\times
 L^{p/(p-1)}(\T^d;\R^d))^*\to\R\cup\{\infty\})$, take any $(f,\bg)\in
 H_{\mathrm{ave}}^{-1}(\T^d)\times L^{p}(\T^d;\R^d)$. Recalling
 \eqref{eq_S_def}, we observe that
\begin{equation}\label{eq_S_conjugate_Q_R}
\begin{split}
&S^*(\Phi_p((f,\bg)))\\
&=\sup_{(u,\bv)\in H_{\mathrm{ave}}^{-1}(\T^d)\times
 L^{p/(p-1)}(\T^d;\R^d)}\{\<\Phi_p((f,\bg)),(u,\bv)\>-S((u,\bv))\}\\
&=\sup_{(u,\bv)\in H_{\mathrm{ave}}^{-1}(\T^d)\times
 L^{p/(p-1)}(\T^d;\R^d)}\sup_{(r,\bs)\in H_{\mathrm{ave}}^{-1}(\T^d)\times
 L^{p/(p-1)}(\T^d;\R^d)}\\
&\quad\cdot\{\<\Phi_p((f,\bg)),(u,\bv)\>-Q^*(\Phi_{p/(p-1)}((u,\bv)-(r,\bs)))-R^*(\Phi_{p/(p-1)}((r,\bs)))\}\\
&=\sup_{(r,\bs)\in H_{\mathrm{ave}}^{-1}(\T^d)\times
 L^{p/(p-1)}(\T^d;\R^d)}\sup_{(u,\bv)\in H_{\mathrm{ave}}^{-1}(\T^d)\times
 L^{p/(p-1)}(\T^d;\R^d)}\\
&\quad\cdot\{\<\Phi_p((f,\bg)),(u,\bv)-(r,\bs)\>-Q^*(\Phi_{p/(p-1)}((u,\bv)-(r,\bs)))\\
&\qquad+\<\Phi_p((f,\bg)),(r,\bs)\>-R^*(\Phi_{p/(p-1)}((r,\bs)))\}\\
&=(Q^*)^*((f,\bg))+(R^*)^*((f,\bg))\\
&=Q((f,\bg))+R((f,\bg)).
\end{split}
\end{equation}
To derive the last equality of \eqref{eq_S_conjugate_Q_R} we applied Lemma \ref{lem_standard_properties_Banach}
 (\ref{double_conjugate}) to $Q$, $R$.
Moreover, by using \eqref{eq_S_conjugate_Q_R} one can verify that
 for $(u,\bv)\in H_{\mathrm{ave}}^{-1}(\T^d)\times
 L^{p/(p-1)}(\T^d;\R^d)$ 
\begin{equation}\label{eq_S_double_conjugate_Q_R}
\begin{split}
(S^*)^*((u,\bv))&=\sup_{(f,\bg)\in H_{\mathrm{ave}}^{-1}(\T^d)\times
 L^{p}(\T^d;\R^d)}\{\<(u,\bv),\Phi_p((f,\bg))\>-S^*(\Phi_p((f,\bg)))\}\\
&=\sup_{(f,\bg)\in H_{\mathrm{ave}}^{-1}(\T^d)\times
 L^{p}(\T^d;\R^d)}\{\<\Phi_{p/(p-1)}((u,\bv)),(f,\bg)\>-(Q+R)((f,\bg))\}\\
&=(Q+R)^*(\Phi_{p/(p-1)}((u,\bv))).
\end{split}
\end{equation}
Combining \eqref{eq_S_double_conjugate_Q_R} with
 \eqref{eq_S_double_conjugate} yields
$$
S((u,\bv))=(Q+R)^*(\Phi_{p/(p-1)}(u,\bv)),\ \forall (u,\bv)\in H_{\mathrm{ave}}^{-1}(\T^d)\times
 L^{p/(p-1)}(\T^d;\R^d).
$$

Finally remark that the argument leading to \eqref{eq_v_s_lambda}
 essentially showed that ` $\inf$ ' in \eqref{eq_S_next} can be replaced
 by ` $\min$ ', which results in the desired equality.
\end{proof}

Lemma \ref{lem_F_per_conjugate_pre} and Lemma \ref{lem_Q_R_conjugate}
complete the characterization of $F_{\mathrm{per}}^{\#}$.
\begin{lemma}\label{lem_F_per_conjugate}
For any $u\in H_{\mathrm{ave}}^{-1}(\T^d)$
$$
F_{\mathrm{per}}^{\#}(u)=\left\{\begin{array}{ll}
\displaystyle\min_{\bs\in
 Y_{\mathrm{per}}(-u)}\int_{\Op}\s^{\#}(\bs(\bx))d\bx&\text{if
  }Y_{\mathrm{per}}(-u)\neq\emptyset,\\
\infty&\text{otherwise.}
\end{array}
\right.
$$
\end{lemma}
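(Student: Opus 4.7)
The plan is to combine the two lemmas immediately preceding the statement: Lemma \ref{lem_F_per_conjugate_pre} rewrites $F_{\mathrm{per}}^{\#}(u)$ as the conjugate $(Q+R)^*$ evaluated at the particular point $\Phi_{p/(p-1)}((u,\b0))$, and Lemma \ref{lem_Q_R_conjugate} already computes this conjugate as an infimal convolution that realizes its minimum. So the proof is essentially a substitution together with a sign-reversal bookkeeping step.

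First I would write
\begin{equation*}
F_{\mathrm{per}}^{\#}(u)=(Q+R)^*(\Phi_{p/(p-1)}((u,\b0)))
\end{equation*}
by Lemma \ref{lem_F_per_conjugate_pre}, and then apply Lemma \ref{lem_Q_R_conjugate} with the second argument $\bv=\b0$ to obtain
\begin{equation*}
F_{\mathrm{per}}^{\#}(u)=\min_{\bs\in Y_{\mathrm{per}}(u)}\int_{\Op}\s^{\#}(-\bs(\bx))d\bx
\end{equation*}
whenever $Y_{\mathrm{per}}(u)\neq\emptyset$, and $\infty$ otherwise.

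Next I would execute the change of variable $\bs\mapsto -\bs$. The key observations are, first, that the explicit formula \eqref{eq_sigma_conjugate} shows $\s^{\#}(\by)$ depends only on $|\by|$, so $\s^{\#}(-\bs(\bx))=\s^{\#}(\bs(\bx))$ pointwise; and second, that the divergence is linear, so $\bs\in Y_{\mathrm{per}}(u)$ if and only if $-\bs\in Y_{\mathrm{per}}(-u)$. Consequently $Y_{\mathrm{per}}(u)\neq\emptyset$ if and only if $Y_{\mathrm{per}}(-u)\neq\emptyset$, and the map $\bs\mapsto -\bs$ is a bijection between these two sets that preserves the integrand. Replacing $\bs$ by $-\bs$ in the minimum thus yields exactly the right-hand side of the claim.

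There is no real obstacle here; the lemma is a direct corollary of the two preceding ones, and the only content is to record that the sign convention on $u$ in $Y_{\mathrm{per}}(\cdot)$ gets flipped by using the evenness of $\s^{\#}$. All of the substantive analytic work — the inf-convolution being attained, the characterization of $Q^*$ and $R^*$ — has been carried out in Lemmas \ref{lem_Q_conjugate}–\ref{lem_Q_R_conjugate}.
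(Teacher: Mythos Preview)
Your proposal is correct and follows exactly the route the paper takes: the paper simply states that Lemma~\ref{lem_F_per_conjugate_pre} and Lemma~\ref{lem_Q_R_conjugate} together yield the result, and you have spelled out the one detail left implicit there, namely the sign flip via the evenness of $\s^{\#}$ and the bijection $\bs\mapsto-\bs$ between $Y_{\mathrm{per}}(u)$ and $Y_{\mathrm{per}}(-u)$.
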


All the preparations have been done to prove Theorem
\ref{thm_subdifferential_per}.
\begin{proof}[Proof of Theorem \ref{thm_subdifferential_per}]
Assume that $\partial F_{\mathrm{per}}(f)\neq \emptyset$ throughout the
 proof. If $u\in \partial F_{\mathrm{per}}(f)$, according to Lemma
 \ref{lem_standard_properties_Hilbert} we equivalently have that
$$
F_{\mathrm{per}}(f)+F_{\mathrm{per}}^{\#}(u)=\<f,u\>_{H_{\mathrm{ave}}^{-1}(\T^d)},
$$
or by Lemma \ref{lem_F_per_conjugate} that
\begin{equation*}
F_{\mathrm{per}}(f)+\min_{\bs\in Y_{\mathrm{per}}(-u)}\int_{\Op}\s^{\#}(\bs(\bx))d\bx=\<f,u\>_{H_{\mathrm{ave}}^{-1}(\T^d)}.
\end{equation*}
Let $\bg\in Y_{\mathrm{per}}(-u)$ be a minimizer. We have $-\div \bg=(-\Dp)^{-1}u$ and 
$$
F_{\mathrm{per}}(f)+\int_{\Op}\s^{\#}(\bg(\bx))d\bx=\<f,u\>_{H_{\mathrm{ave}}^{-1}(\T^d)},
$$
which lead to
\begin{equation}\label{eq_proof_per_integral}
\int_{\Op}(\s(\nabla\wf(\bx))+\s^{\#}(\bg(\bx)))d\bx=\<f,-\div \bg\>.
\end{equation}
 
By Lemma \ref{lem_density_div} we can choose a sequence
 $\{\bg_n\}_{n=1}^{\infty}$ $(\subset C^{\infty}(\T^d;\R^d))$ so that
as $n\to\infty$
\begin{equation*}
\begin{split}
&\bg_n\to\bg\text{ in }L^{p/(p-1)}(\T^d;\R^d),\\
&\div \bg_n\to \div \bg\text{ in }H_{\mathrm{ave}}^{1}(\T^d).
\end{split}
\end{equation*}
Then by using Lemma 2.6 we see that
\begin{equation*}
\begin{split}
\<f,-\div \bg\>&=-\lim_{n\to\infty}\int_{\Op}\wf(\bx)\div
 \bg_n(\bx)d\bx=\lim_{n\to\infty}\int_{\Op}\<\nabla\wf(\bx),\bg_n(\bx)\>_{\R^d}d\bx\\
&=\int_{\Op}\<\nabla
 \wf(\bx),\bg(\bx)\>_{\R^d}d\bx.
\end{split}
\end{equation*}
Therefore, we can deduce from \eqref{eq_proof_per_integral} that
$$
\int_{\Op}(\s(\nabla \wf(\bx))+\s^{\#}(\bg(\bx))-\<\nabla
 \wf(\bx),\bg(\bx)\>_{\R^d})d\bx= 0.
$$
Since the integrand of the integral above is non-negative, we obtain
$$
\s(\nabla
 \wf(\bx))+\s^{\#}(\bg(\bx))-\<\nabla\wf(\bx),\bg(\bx)\>_{\R^d}=0\text{
  a.e. }\bx\in\R^d,
$$
or equivalently 
 $\bg(\bx)\in\partial\s(\nabla \wf(\bx))$ a.e. $\bx\in\R^d$ by Lemma \ref{lem_standard_properties_Hilbert}. Since
 $u=-(-\Dp)$ $\cdot\div\bg$, we have
 proved the inclusion ` $\subset$ ' of the claim of Theorem
 \ref{thm_subdifferential_per}.

To show the opposite inclusion ` $\supset$ ', take any $u\in
 H_{\mathrm{ave}}^{-1}(\T^d)$ for which there is $\bg\in
 L^{p/(p-1)}(\T^d;\R^d)$ such that $\div \bg\in
 H_{\mathrm{ave}}^{1}(\T^d)$, $u=-(-\Dp)\div \bg$ and
 $\bg(\bx)\in\partial\s(\nabla \wf(\bx))$ a.e. $\bx\in\R^d$. Then by
 exactly following the argument above the other way round we can reach
$$
F_{\mathrm{per}}(f)+\int_{\Op}\s^{\#}(\bg(\bx))d\bx=\<f,u\>_{H_{\mathrm{ave}}^{-1}(\T^d)}.
$$
By taking infimum over such $\bg$s and by Lemma
 \ref{lem_F_per_conjugate} one has
$$F_{\mathrm{per}}(f)+F^{\#}_{\mathrm{per}}(u)\le \<f,u\>_{H_{\mathrm{ave}}^{-1}(\T^d)},$$
which is equivalent to the inclusion $u\in\partial F_{\mathrm{per}}(f)$
 by the definition of $F_{\mathrm{per}}^{\#}$ and Lemma
 \ref{lem_standard_properties_Hilbert}. We have proved the
 inclusion ` $\supset$ ' as well.
\end{proof}

\subsection{Proof for the Dirichlet problem}
The major part of the proof for Theorem \ref{thm_subdifferential_D} can
be constructed by straightforwardly translating the proof for Theorem
\ref{thm_subdifferential_per} into the context with the Dirichlet
boundary condition. Let us, therefore, explain only different parts
from the periodic problem and be brief about the parallel parts.

To characterize the conjugate functional $F_{\mathrm{D}}^{\#}$ $(:H^{-1}(\O)\to\R\cup\{\infty\})$ we
introduce functionals $\Qd,\Rd:H^{-1}(\O)\times
L^p(\O;\R^d)\to\R\cup\{\infty\}$ by
\begin{equation*}
\begin{split}
&\Qd((f,\bg)):=\int_{\O}\s(\bg(\bx))d\bx,\\
&\Rd((f,\bg)):=\left\{\begin{array}{ll}0&\text{if }f\in
		X_{\mathrm{D}}\text{ and }\bg=\nabla\wf,\\
\infty&\text{otherwise,}
\end{array}
\right. 
\end{split}
\end{equation*}
where $H^{-1}(\O)\times L^p(\O;\R^d)$ is the real Banach space with the
norm $\|(f,\bg)\|_{\mathrm{D}}:=\|f\|_{H^{-1}(\O)}+\|\bg\|_{L^p(\O;\R^d)}$. The functionals $\Qd$, $\Rd$ are convex, lower
semi-continuous and not identically $\infty$. 

The difference from the
periodic problem mainly lies in a lack of a density property like Lemma
\ref{lem_density_div}, which worked conveniently in the periodic case.  
Consequently in the Dirichlet problem the characterization of $R_{\mathrm{D}}^*$,
$F_{\mathrm{D}}^{\#}$ and $\partial F_{\mathrm{D}}$ inherits an
additional constraint, which is to require a function $\bw$ $(\in
L^{p/(p-1)}(\O;\R^d))$ satisfying $\div \bw\in H_0^1(\O)$ to obey
\begin{equation}\label{eq_additional}
\<\div\bw,h\>+\int_{\O}\<\bw(\bx),\nabla\widetilde{h}(\bx)\>_{\R^d}d\bx=0,\
 \forall h\in X_{\mathrm{D}}.
\end{equation}

The first difference appears in the characterization of
$R_{\mathrm{D}}^*$ $(:(H^{-1}(\O)\times L^p(\O;\R^d))^*$ $\to
\R\cup\{\infty\})$, while the characterization of $Q_{\mathrm{D}}^*$ can
be carried out in the same way as in Lemma \ref{lem_Q_conjugate}. Using
the isomorphism $\Psi_{p/(p-1)}:H^{-1}(\O)\times L^{p/(p-1)}(\O;\R^d)\to
(H^{-1}(\O)\times L^p(\O;\R^d))^*$ defined by
\begin{equation*}
\begin{split}
&\<\Psi_{p/(p-1)}((u,\bv)),(f,\bg)\>:=\<u,f\>_{H^{-1}(\O)}+\int_{\O}\<\bv(\bx),\bg(\bx)\>_{\R^d}d\bx,\\
&\forall (u,\bv)\in H^{-1}(\O)\times L^{p/(p-1)}(\O;\R^d),\ \forall
 (f,\bg)\in H^{-1}(\O)\times L^{p}(\O;\R^d),
\end{split}
\end{equation*}
we have
\begin{lemma}\label{lem_R_D_conjugate}
For any $(u,\bv)\in H^{-1}(\O)\times L^{p/(p-1)}(\O;\R^d)$
$$
R_{\mathrm{D}}^*(\Psi_{p/(p-1)}((u,\bv)))=\left\{\begin{array}{ll}0&
\text{if }\bv\text{ satisfies \eqref{eq_additional} and }\div\bv
					  =(-\Dd)^{-1}u,\\
\infty &\text{otherwise.}
\end{array}
\right.
$$ 
\end{lemma}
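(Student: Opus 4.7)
The plan is to follow exactly the pattern of Lemma \ref{lem_R_conjugate}: expand the supremum using the definition of $R_{\mathrm{D}}$, exploit the linearity of $X_{\mathrm{D}}$ to reduce to a dichotomy between $0$ and $\infty$, and then isolate the distributional identity $\div\bv=(-\Dd)^{-1}u$ by testing against compactly supported smooth functions. What will replace the periodic integration-by-parts argument (Lemma \ref{lem_integration_by_parts} combined with the density Lemma \ref{lem_density_div}) is precisely the extra requirement \eqref{eq_additional}, which has to be imposed by hand in the Dirichlet case.

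Concretely, unfolding the definitions gives
\begin{equation*}
R_{\mathrm{D}}^*(\Psi_{p/(p-1)}((u,\bv)))=\sup_{f\in X_{\mathrm{D}}}\left\{\<u,f\>_{H^{-1}(\O)}+\int_{\O}\<\bv(\bx),\nabla\wf(\bx)\>_{\R^d}d\bx\right\},
\end{equation*}
and since $X_{\mathrm{D}}$ is a real linear space on which $f\mapsto\wf$ is linear, replacing $f$ by $\lambda f$ and letting $\lambda\to\pm\infty$ shows the supremum is either $0$ or $\infty$, with value $0$ iff
\begin{equation}\label{eq_plan_vanish}
\<u,f\>_{H^{-1}(\O)}+\int_{\O}\<\bv(\bx),\nabla\wf(\bx)\>_{\R^d}d\bx=0,\quad\forall f\in X_{\mathrm{D}}.
\end{equation}
For the ``if'' direction I would assume $\div\bv=(-\Dd)^{-1}u$ (so automatically $\div\bv\in H_0^1(\O)$) and that \eqref{eq_additional} holds; then for any $f\in X_{\mathrm{D}}$ the defining identity $\<u,f\>_{H^{-1}(\O)}=\<(-\Dd)^{-1}u,f\>=\<\div\bv,f\>$, combined with \eqref{eq_additional} applied at $h=f$, yields \eqref{eq_plan_vanish}. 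For the reverse direction I would assume \eqref{eq_plan_vanish}; for each $\phi\in C_0^{\infty}(\O)$ the functional $f:\psi\mapsto\int_{\O}\phi(\bx)\psi(\bx)d\bx$ belongs to $L^2(\O)\subset H^{-1}(\O)$ and satisfies $f\in X_{\mathrm{D}}$ with $\wf=\phi$, so substituting this $f$ into \eqref{eq_plan_vanish} produces
\begin{equation*}
\int_{\O}(-\Dd)^{-1}u(\bx)\phi(\bx)d\bx+\int_{\O}\<\bv(\bx),\nabla\phi(\bx)\>_{\R^d}d\bx=0,
\end{equation*}
which is the distributional equality $\div\bv=(-\Dd)^{-1}u$ on $\O$ (and hence $\div\bv\in H_0^1(\O)$); feeding this back into \eqref{eq_plan_vanish} then reproduces \eqref{eq_additional}.

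The main obstacle, as the text preceding the lemma already flags, is that Lemma \ref{lem_density_div} has no Dirichlet counterpart: one cannot approximate an arbitrary $\bv\in L^{p/(p-1)}(\O;\R^d)$ with $\div\bv\in H_0^1(\O)$ by smooth vector fields compatible with the Dirichlet structure, so the clean integration-by-parts identity available over $X_{\mathrm{per}}$ simply fails to extend to $X_{\mathrm{D}}$. This is exactly why \eqref{eq_additional} must enter as a genuine extra constraint rather than being a free consequence of $\div\bv=(-\Dd)^{-1}u$; recognizing that this missing piece is the correct replacement, and verifying that it is exactly what \eqref{eq_plan_vanish} expresses once the distributional identity has been extracted, is the only non-routine step of the argument.
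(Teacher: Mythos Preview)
Your proposal is correct and follows essentially the same route as the paper: both expand the supremum over $X_{\mathrm{D}}$, use linearity to reduce to the $0/\infty$ dichotomy, test against $C_0^\infty(\O)$ to extract $\div\bv=(-\Dd)^{-1}u$, and then identify the residual condition on all of $X_{\mathrm{D}}$ with \eqref{eq_additional}. The paper presents these steps in a slightly more compressed two-stage form (first the lower bound via $C_0^\infty(\O)$, then the exact computation assuming $\div\bv=(-\Dd)^{-1}u$), but the content is the same.
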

\begin{proof}
Take any $(u,\bv)\in H^{-1}(\O)\times L^{p/(p-1)}(\O;\R^d)$.
\begin{equation*}
\begin{split}
R_{\mathrm{D}}^*(\Psi_{p/(p-1)}((u,\bv)))&=\sup_{f\in
 X_{\mathrm{D}}}\left\{\<u,f\>_{H^{-1}(\O)}+\int_{\O}\<\bv(\bx),\nabla\wf(\bx)\>_{\R^d}d\bx\right\}\\
&\ge \sup_{\phi\in C_{0}^{\infty}(\O)}\int_{\O}((-\Dd)^{-1}u(\bx)\cdot
 \phi(\bx)+\<\bv(\bx),\nabla \phi(\bx)\>_{\R^d})d\bx\\
&=\left\{\begin{array}{ll}0&\text{if }\div\bv=(-\Dd)^{-1}u,\\
\infty&\text{otherwise. }
\end{array}\right.
\end{split}
\end{equation*}

On the assumption that $\div \bv=(-\Dd)^{-1}u$ we have that
\begin{equation*}
\begin{split}
R_{\mathrm{D}}^*(\Psi_{p/(p-1)}((u,\bv)))&=\sup_{f\in
 X_{\mathrm{D}}}\left\{\<\div \bv,f\>+\int_{\O}\<\bv(\bx),\nabla\wf(\bx)\>_{\R^d}d\bx\right\}\\
&=\left\{\begin{array}{ll}0&\text{if }\bv\text{ satisfies \eqref{eq_additional}},\\
\infty&\text{otherwise.}
\end{array}
\right.
\end{split}
\end{equation*}
\end{proof}

For any $u\in H^{-1}(\O)$ let us define a subset $Y_{\mathrm{D}}(u)$ of
$L^{p/(p-1)}(\O;\R^d)$ by
$$
Y_{\mathrm{D}}(u):=\{\bs\in L^{p/(p-1)}(\O;\R^d)\ |\ \div\bs =(-\Dd)^{-1}u,\
\bs\text{ satisfies \eqref{eq_additional}}\}.
$$
By noting that $Y_{\mathrm{D}}(u)$
is convex and closed in $L^{p/(p-1)}(\O;\R^d)$ for any $u\in H^{-1}(\O)$
with $Y_{\mathrm{D}}(u)\neq \emptyset$, we can straightforwardly
modify the proof of Lemma \ref{lem_Q_R_conjugate} to conclude the following.

\begin{lemma}\label{lem_Q_R_D_conjugate}
 For any $(u,\bv)\in H^{-1}(\O)\times L^{p/(p-1)}(\O;\R^d)$
$$
(Q_{\mathrm{D}}+R_{\mathrm{D}})^*(\Psi_{p/(p-1)}((u,\bv)))=\left\{\begin{array}{ll}\displaystyle\min_{\bs\in
							    Y_{\mathrm{D}}(u)}\int_{\O}\s^{\#}(\bv(\bx)-\bs(\bx))d\bx&\text{if }Y_{\mathrm{D}}(u)\neq\emptyset,\\
\infty&\text{otherwise.}
\end{array}
\right.
$$
\end{lemma}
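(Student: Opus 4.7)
The plan is to mirror the proof of Lemma \ref{lem_Q_R_conjugate} nearly verbatim, substituting $\Op\leftrightarrow\O$, $H^{-1}_{\mathrm{ave}}(\T^d)\leftrightarrow H^{-1}(\O)$, $W^{1,p}_{\mathrm{ave}}(\T^d)\leftrightarrow W^{1,p}_0(\O)$, $\Phi_{p/(p-1)}\leftrightarrow\Psi_{p/(p-1)}$, $-\Dp\leftrightarrow -\Dd$ and $Y_{\mathrm{per}}\leftrightarrow Y_{\mathrm{D}}$ throughout, and using Lemma \ref{lem_R_D_conjugate} in place of Lemma \ref{lem_R_conjugate}. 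The point is that the additional constraint \eqref{eq_additional} has already been absorbed into the definition of $Y_{\mathrm{D}}(u)$ itself, so the statements of Lemma \ref{lem_R_D_conjugate} and of the Dirichlet analogue of Lemma \ref{lem_Q_conjugate} have the same formal shape as their periodic counterparts and the bookkeeping transposes unchanged.

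Concretely I would first form the infimal convolution
\begin{equation*}
S_{\mathrm{D}}((u,\bv)):=\inf_{(r,\bs)\in H^{-1}(\O)\times L^{p/(p-1)}(\O;\R^d)}\{Q_{\mathrm{D}}^*(\Psi_{p/(p-1)}((u,\bv)-(r,\bs)))+R_{\mathrm{D}}^*(\Psi_{p/(p-1)}((r,\bs)))\},
\end{equation*}
and use the Dirichlet analogue of Lemma \ref{lem_Q_conjugate} together with Lemma \ref{lem_R_D_conjugate} to reduce $S_{\mathrm{D}}((u,\bv))$ to $\inf_{\bs\in Y_{\mathrm{D}}(u)}\int_{\O}\s^{\#}(\bv(\bx)-\bs(\bx))d\bx$ when $Y_{\mathrm{D}}(u)\neq\emptyset$ and to $\infty$ otherwise. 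The remaining task is to show $S_{\mathrm{D}}=(Q_{\mathrm{D}}+R_{\mathrm{D}})^*\circ\Psi_{p/(p-1)}$, for which I would verify that $S_{\mathrm{D}}$ is convex, lower semi-continuous, and not identically $\infty$, then compute $S_{\mathrm{D}}^*$ via the same double-supremum exchange as in \eqref{eq_S_conjugate_Q_R} to identify it with $Q_{\mathrm{D}}+R_{\mathrm{D}}$ (using Lemma \ref{lem_standard_properties_Banach}(\ref{double_conjugate}) applied to $Q_{\mathrm{D}}$ and $R_{\mathrm{D}}$), and conclude with another application of Lemma \ref{lem_standard_properties_Banach}(\ref{double_conjugate}) now to $S_{\mathrm{D}}$.

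The main obstacle, as in the periodic case, is the lower semi-continuity of $S_{\mathrm{D}}$, and the one genuinely new line of reasoning compared with Lemma \ref{lem_Q_R_conjugate} is the verification that $Y_{\mathrm{D}}(w)$ is convex and norm-closed in $L^{p/(p-1)}(\O;\R^d)$ for every $w$ with $Y_{\mathrm{D}}(w)\neq\emptyset$. Convexity is immediate from the linearity in $\bs$ of both defining conditions; closedness follows because if $\bs_n\to\bs$ in $L^{p/(p-1)}$ then $\div\bs_n\to\div\bs$ in $\cD'(\O)$, so $\div\bs=(-\Dd)^{-1}w$ persists, and the bilinear identity \eqref{eq_additional} tested against any fixed $h\in X_{\mathrm{D}}$ is continuous by H\"older (since $\nabla\widetilde{h}\in L^{p}(\O;\R^d)$) and $\<\div\bs,h\>$ is fixed on $Y_{\mathrm{D}}(w)$. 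Once this is established, the argument leading to \eqref{eq_v_s_lambda}--\eqref{eq_s_div} transposes without change: extract near-minimizers $\bs_i^n\in Y_{\mathrm{D}}(u_n)$, use \eqref{eq_sigma_conjugate} to bound them in $L^{p/(p-1)}$, pass to a weak limit $\bs_n\in Y_{\mathrm{D}}(u_n)$ via Mazur, then to a weak limit $\bs\in Y_{\mathrm{D}}(u)$ via a second application of Mazur, and invoke Fatou together with the convexity of $\s^{\#}$ to deduce $S_{\mathrm{D}}((u,\bv))\le\lambda$. The upgrade of $\inf$ to $\min$ in the final statement is a byproduct of the same construction, carried out with $u$ in place of the varying $u_n$, which produces an attained minimizer inside $Y_{\mathrm{D}}(u)$.
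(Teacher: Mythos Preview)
Your proposal is correct and is precisely the argument the paper intends: the text preceding Lemma \ref{lem_Q_R_D_conjugate} says only that, once one notes $Y_{\mathrm{D}}(u)$ is convex and closed in $L^{p/(p-1)}(\O;\R^d)$, the proof of Lemma \ref{lem_Q_R_conjugate} transposes straightforwardly. You have in fact supplied more detail than the paper does, correctly identifying the convexity/closedness of $Y_{\mathrm{D}}(u)$ as the only new point and giving a valid justification for it (the constraint $\div\bs=(-\Dd)^{-1}u$ passes to the $L^{p/(p-1)}$-limit via distributions, and the pairing in \eqref{eq_additional} is continuous in $\bs$ by H\"older while the first term is fixed once $\div\bs$ is).
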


Since the Dirichlet analogue of Lemma \ref{lem_F_per_conjugate_pre}
holds naturally, we obtain from Lemma \ref{lem_Q_R_D_conjugate} that 

\begin{lemma}\label{lem_F_D_conjugate}
For any $u\in H^{-1}(\O)$
$$
F_{\mathrm{D}}^{\#}(u)=\left\{\begin{array}{ll}\displaystyle\min_{\bs\in
							    Y_{\mathrm{D}}(-u)}\int_{\O}\s^{\#}(\bs(\bx))d\bx&\text{if }Y_{\mathrm{D}}(-u)\neq\emptyset,\\
\infty&\text{otherwise.}
\end{array}
\right.
$$
\end{lemma}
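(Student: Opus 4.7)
The plan is to reduce the statement to Lemma \ref{lem_Q_R_D_conjugate} via the Dirichlet analogue of Lemma \ref{lem_F_per_conjugate_pre}. First I would verify that, for every $u\in H^{-1}(\O)$,
\begin{equation*}
F_{\mathrm{D}}^{\#}(u)=(Q_{\mathrm{D}}+R_{\mathrm{D}})^{*}(\Psi_{p/(p-1)}((u,\b0))).
\end{equation*}
This is purely definitional: expanding the right hand side, the supremum over $(f,\bg)\in H^{-1}(\O)\times L^{p}(\O;\R^{d})$ collapses under $R_{\mathrm{D}}$ to a supremum over $f\in X_{\mathrm{D}}$ with $\bg=\nabla\wf$, which is exactly the supremum defining $F_{\mathrm{D}}^{\#}(u)$. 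The proof is a verbatim translation of Lemma \ref{lem_F_per_conjugate_pre} and uses nothing specific about the boundary condition.

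Next I would specialize Lemma \ref{lem_Q_R_D_conjugate} to $\bv=\b0$, obtaining
\begin{equation*}
F_{\mathrm{D}}^{\#}(u)=\min_{\bs\in Y_{\mathrm{D}}(u)}\int_{\O}\s^{\#}(-\bs(\bx))\,d\bx
\end{equation*}
whenever $Y_{\mathrm{D}}(u)\neq\emptyset$, and $\infty$ otherwise. I would then exploit two elementary observations. First, by inspection of \eqref{eq_sigma_conjugate}, $\s^{\#}(\by)$ depends only on $|\by|$, so $\s^{\#}(-\bs)=\s^{\#}(\bs)$ pointwise. Second, the defining conditions $\div\bs=(-\Dd)^{-1}u$ and \eqref{eq_additional} are both linear in $\bs$, so the map $\bs\mapsto-\bs$ is a bijection $Y_{\mathrm{D}}(u)\to Y_{\mathrm{D}}(-u)$ which in particular preserves (non)emptiness. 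The change of variables $\bs'=-\bs$ then turns the displayed minimum into $\min_{\bs'\in Y_{\mathrm{D}}(-u)}\int_{\O}\s^{\#}(\bs'(\bx))\,d\bx$, giving the claimed formula.

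Since the hard analytical work (inf-convolution duality together with the weak compactness and Mazur argument that upgrades the infimum in $\bs$ to a minimum) was already absorbed into Lemmas \ref{lem_R_D_conjugate} and \ref{lem_Q_R_D_conjugate}, no genuine difficulty remains at this stage; the only things to be careful about are the sign convention (hence the appearance of $-u$ rather than $u$) and the fact that the additional constraint \eqref{eq_additional} behaves linearly under $\bs\mapsto-\bs$, which is what makes the cosmetic sign change harmless. The main conceptual obstacle, namely the absence of a Dirichlet counterpart to the density Lemma \ref{lem_density_div}, has already been dealt with by building the condition \eqref{eq_additional} into the definition of $Y_{\mathrm{D}}(u)$, so nothing extra needs to be done here.
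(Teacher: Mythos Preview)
Your proposal is correct and follows essentially the same route as the paper, which simply states that the Dirichlet analogue of Lemma~\ref{lem_F_per_conjugate_pre} combined with Lemma~\ref{lem_Q_R_D_conjugate} yields the result. You have merely been more explicit than the paper about the cosmetic sign change $\bs\mapsto -\bs$ (using the evenness of $\s^{\#}$ and the linearity of the constraints defining $Y_{\mathrm{D}}$), which the paper leaves implicit.
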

On these preparations we can prove Theorem \ref{thm_subdifferential_D}.
\begin{proof}[Proof of Theorem \ref{thm_subdifferential_D}] 
Assume that $\partial F_{\mathrm{D}}(f)\neq\emptyset$. By Lemma
 \ref{lem_standard_properties_Hilbert} and Lemma
 \ref{lem_F_D_conjugate} the inclusion
 $u\in\partial F_{\mathrm{D}}(f)$ is equivalent to the equality
\begin{equation}\label{eq_proof_D_min}
F_{\mathrm{D}}(f)+\min_{\bs\in Y_{\mathrm{D}}(-u)}\int_{\O}\s^{\#}(\bs(\bx))d\bx=\<f,u\>_{H^{-1}(\O)}.
\end{equation}
If $\bg$ $(\in Y_{\mathrm{D}}(-u))$ is a minimizer,
$u=-(-\Dd)\div\bg$ and the equality \eqref{eq_proof_D_min} coupled
 with \eqref{eq_additional} leads to 
$$
\int_{\O}\s(\nabla
 \wf(\bx))d\bx+\int_{\O}\s^{\#}(\bg(\bx))d\bx=\int_{\O}\<\bg(\bx),\nabla \wf(\bx)\>_{\R^d}d\bx,
$$
which is equivalent to the inclusion that $\bg(\bx)\in\partial
 \s(\nabla\wf(\bx))$ a.e. $\bx\in\O$ by Lemma
 \ref{lem_standard_properties_Hilbert}. We have proved the inclusion `
 $\subset$ ' of Theorem \ref{thm_subdifferential_D}. The opposite
 inclusion ` $\supset$ ' can be shown by arguing the other way around.
\end{proof}

\begin{proof}[Proof of Corollary \ref{cor_subdifferential_D_restricted}]
We show that 
\begin{equation}\label{eq_integral_functional}
\begin{split}
&\bullet \text{the bilinear form }(f,g)\mapsto
      \int_{\O}f(\bx)g(\bx)d\bx\text{ is
      well-defined on }W_0^{1,p}(\O)\times H_0^1(\O),\\
&\bullet f\mapsto \int_{\O}f(\bx)g(\bx)d\bx\text{ is continuous in }
      W_0^{1,p}(\O)\ (\forall g\in H_0^1(\O)),\\
&\bullet g\mapsto \int_{\O}f(\bx)g(\bx)d\bx\text{ is continuous in }
      H_0^1(\O)\ (\forall f\in W_0^{1,p}(\O)),
\end{split}
\end{equation}
in the assumed circumstance by means of the Sobolev embedding theorem.
Note that if \eqref{eq_integral_functional} holds, the constraint
 \eqref{eq_additional} is trivial by the density property of
 $C_0^{\infty}(\O)$ in $H_0^1(\O)$ and $W_0^{1,p}(\O)$.

If $d\le 2$, $H_0^1(\O)\subset L^{p/(p-1)}(\O)$, thus
 \eqref{eq_integral_functional} is true.

If $p\ge d$, $W_0^{1,p}(\O)\subset L^2(\O)$. Therefore
 \eqref{eq_integral_functional} holds.

If $d\ge 3$ and $1<p<d$, $H_0^1(\O)\subset L^{2d/(d-2)}(\O)$ and
 $W_0^{1,p}(\O)\subset L^{dp/(d-p)}(\O)$. From this we see that the
 inequality
$$
\frac{2d}{d-2}\ge \frac{dp/(d-p)}{dp/(d-p)-1}
$$
is sufficient to guarantee \eqref{eq_integral_functional}. This inequality
 is equivalent to $p\ge 2d/(d+4)$.

By summing up, the condition \eqref{eq_condition_D} is seen to be
 sufficient for \eqref{eq_integral_functional} to be true.
\end{proof}

\section{Canonical restriction for a spherically symmetric surface}\label{sec_canonical_restriction}
In this section we will find the smallest element in $\partial
F_{\mathrm{D}}(f)$ with respect to the norm $\|\cdot\|_{H^{-1}(\O)}$ by
giving a spherically symmetric surface $\wf$. Let us write the smallest
element called canonical restriction as $\partial
F_{\mathrm{D}}^c(f)$. It is known (see e.g. \cite{B}) that the solution
to the initial value problem
$$\left\{\begin{array}{l}
\frac{d}{dt}f_{\mathrm{D}}(t)\in -\partial
 F_{\mathrm{D}}(f_{\mathrm{D}}(t))\text{ a.e.}t>0,\\
f_{\mathrm{D}}(0)=f_{\mathrm{D},0}(\in \overline{X_{\mathrm{D}}})
\end{array}
\right.
$$
satisfies
$$
\frac{d^+}{dt}f_{\mathrm{D}}(t)=-\partial
F_{\mathrm{D}}^c(f_{\mathrm{D}}(t))\text{ all }t>0,
$$
where $d^+/dt$ means the right derivative. Hence, the canonical
restriction provides useful information on the time evolution of the
crystalline surface as already discussed for 1 dimensional problems in 
\cite[\mbox{Section 4}]{K}, \cite[\mbox{Chapter 3}]{O}. Here we argue 
a general dimensional problem under the constraint
\eqref{eq_condition_D}.

We fix $f\in X_{\mathrm{D}}$ whose $\wf$ $(\in W_0^{1,p}(\O))$ satisfies that $\nabla
\wf(\bx)=0$ a.e. $\bx\in \O_0$ and $\nabla \wf(\bx)\neq 0$ a.e. $\bx \in
\O \backslash \overline{\O_0}$ with an open set $\O_0$ satisfying 
$\overline{\O_0}\subset \O$.

Using this $\wf$, we define a function
$\bu_{\wf}:\O\backslash\overline{\O_0}\to \R^d$ by
$$
\bu_{\wf}(\bx):=|\nabla \wf(\bx)|^{-1}\nabla \wf(\bx)+\mu |\nabla \wf(\bx)|^{p-2}\nabla \wf(\bx).
$$

For any functions $\bg:\O_0\to \R^m$, $\bh:\O\backslash \overline{\O_0}\to \R^m$ $(m\in\N)$, let
$(\bg|\bh):\O\to\R^m$ be defined by
$$
(\bg|\bh)(\bx):=\left\{\begin{array}{ll}\bg(\bx)&\text{ if }\bx\in\O_0,
		 \\ \bh(\bx)&\text{ if }\bx\in\O\backslash \overline{\O_0}.
\end{array}
\right.
$$
The following lemma tells us a way to find $\partial
F_{\mathrm{D}}^c(f)$.

\begin{lemma}\label{lem_search_canonical_restriction} 
Assume that the condition \eqref{eq_condition_D} holds and that $\bg$ $(\in
 L^{p/(p-1)}(\O_0;\R^d))$ and $\bu_{\wf}$ $(\in
 L^{p/(p-1)}(\O\backslash \overline{\O_0};\R^d))$ satisfy the
 following conditions.
\begin{enumerate}[(i)]
\item\label{item_smooth}
there exists $\bpsi\in C^{\infty}_0(\O;\R^d)$ such that
     $\bpsi|_{\O_0}=\bg$.
\item\label{item_E_L}
$\nabla \Delta \div \bg(\bx)=\b0$, $\forall \bx\in \O_0$.
\item\label{item_bdd}
$|\bg(\bx)|\le 1$, $\forall \bx\in \O_0$.
\item\label{item_conforming}
$\div (\bg|\bu_{\wf})\in H_0^1(\O)$. 
\end{enumerate}
Then, $\partial F_{\mathrm{D}}^c(f)=-(-\Dd)\div (\bg|\bu_{\wf})$.
\end{lemma}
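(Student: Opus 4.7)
The plan is to first verify that $u^{*}:=-(-\Dd)\div(\bg|\bu_{\wf})$ is an element of $\partial F_{\mathrm{D}}(f)$, and then to show that it has the smallest $H^{-1}(\O)$-norm within the subdifferential.

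For membership, I would invoke Corollary \ref{cor_subdifferential_D_restricted}, available under the standing hypothesis \eqref{eq_condition_D}. From (i) together with the hypothesized integrability of $\bu_{\wf}$ we have $(\bg|\bu_{\wf})\in L^{p/(p-1)}(\O;\R^{d})$, and (iv) gives $\div(\bg|\bu_{\wf})\in H_{0}^{1}(\O)$. For the required pointwise inclusion $(\bg|\bu_{\wf})(\bx)\in\partial\s(\nabla\wf(\bx))$ almost everywhere: on $\O_{0}$ one has $\nabla\wf=\b0$ and $\partial\s(\b0)=\{\by\in\R^{d}:|\by|\le 1\}$, into which $\bg$ falls by (iii); on $\O\setminus\overline{\O_{0}}$, where $\nabla\wf\ne\b0$, a direct computation gives $\partial\s(\nabla\wf)=\{\bu_{\wf}\}$ and the inclusion is automatic.

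For minimality, $\partial F_{\mathrm{D}}(f)$ is a closed convex subset of $H^{-1}(\O)$, so $\partial F_{\mathrm{D}}^{c}(f)$ is simply its projection of the origin. Since $-\Dd:H_{0}^{1}(\O)\to H^{-1}(\O)$ is an isometry, $\|-(-\Dd)\div\bh\|_{H^{-1}(\O)}^{2}=\int_{\O}|\nabla\div\bh|^{2}d\bx$ for each admissible $\bh$. Writing an arbitrary competitor as $\bh=(\bg|\bu_{\wf})+\bphi$ and expanding the squared norm, the minimality of $u^{*}$ reduces to the cross-term identity
\begin{equation*}
\int_{\O}\<\nabla\div(\bg|\bu_{\wf}),\nabla\div\bphi\>_{\R^{d}}d\bx=0.
\end{equation*}
The single-valuedness of $\partial\s(\nabla\wf)$ on $\O\setminus\overline{\O_{0}}$ forces $\bh=\bu_{\wf}$ there, so $\bphi=\b0$ a.e.\ on that set; consequently $\div\bphi\in H^{1}(\O)$ vanishes there and so does $\nabla\div\bphi$.

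The integral therefore collapses to $\int_{\O_{0}}\<\nabla\div\bg,\nabla\div\bphi\>_{\R^{d}}d\bx$. Using the smooth extension $\bpsi\in C_{0}^{\infty}(\O;\R^{d})$ of $\bg$ supplied by (i) and recalling that $\div\bphi\in H_{0}^{1}(\O)$, I would integrate by parts over $\O$ and use once more the vanishing of $\div\bphi$ outside $\overline{\O_{0}}$ to get $-\int_{\O_{0}}\Delta\div\bg\cdot\div\bphi\,d\bx$. By (ii), $\Delta\div\bg$ is locally constant on $\O_{0}$, taking a value $c_{j}$ on each connected component $\O_{0}^{(j)}$, so the remaining task is to show $\int_{\O_{0}^{(j)}}\div\bphi\,d\bx=0$ for each $j$. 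For this I would choose $\chi_{j}\in C_{0}^{\infty}(\O)$ equal to $1$ on $\overline{\O_{0}^{(j)}}$ with support disjoint from the other components of $\O_{0}$ and from $\partial\O$, forcing $\supp\nabla\chi_{j}\subset\O\setminus\overline{\O_{0}}$; then
\begin{equation*}
\int_{\O_{0}^{(j)}}\div\bphi\,d\bx=\int_{\O}\chi_{j}\div\bphi\,d\bx=-\int_{\O}\<\nabla\chi_{j},\bphi\>_{\R^{d}}d\bx=0,
\end{equation*}
using $\div\bphi=0$ outside $\overline{\O_{0}}$ for the first equality and $\bphi=\b0$ on $\supp\nabla\chi_{j}$ for the last. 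This cutoff step is the main technical point: condition (ii) only produces a separate constant per component rather than a single global one, so the components must be isolated one at a time, and it is precisely the vanishing of $\bphi$ outside $\overline{\O_{0}}$ that kills the remaining boundary contribution.
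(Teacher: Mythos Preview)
Your proof is correct and shares the paper's core mechanism---membership via Corollary~\ref{cor_subdifferential_D_restricted}, then an orthogonality relation obtained from conditions (i) and (ii) through integration by parts---but the two are organised differently. The paper first names the minimiser $\bG$, uses the variational inequality $\int_{\O_0}\langle\nabla\div\bg-\nabla\div\bG,\nabla\div\bG\rangle\,d\bx\ge 0$, and separately proves $\int_{\O_0}\langle\nabla\div\bg-\nabla\div\bG,\nabla\div\bg\rangle\,d\bx=0$; subtraction gives $\nabla\div\bg=\nabla\div\bG$ on $\O_0$. You instead show $\int_{\O}\langle\nabla\div(\bg|\bu_{\wf}),\nabla\div\bphi\rangle\,d\bx=0$ for every admissible perturbation $\bphi$, which yields minimality by a Pythagorean expansion without first invoking the minimiser. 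A second, more technical difference: after one integration by parts you land on $-\int_{\O_0}\Delta\div\bg\cdot\div\bphi\,d\bx$ and dispose of it by a per-component cutoff argument, whereas the paper effectively integrates by parts once more (taking the test field $\nabla\div\bpsi$ with $\bpsi$ the smooth extension of $\bg$) and lands directly on $\int_{\O_0}\langle\bg-\bG,\nabla\Delta\div\bg\rangle\,d\bx$, which vanishes by (ii) with no reference to components. Your route is entirely adequate in the intended application ($\O_0$ a ball), but as written it tacitly assumes the connected components of $\O_0$ have pairwise separated closures so that the cutoffs $\chi_j$ exist; one further integration by parts would remove that assumption and match the paper's generality.
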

\begin{proof}
By the conditions \eqref{item_bdd}, \eqref{item_conforming} and
 Corollary \ref{cor_subdifferential_D_restricted}, $-(-\Dd)\div
 (\bg|\bu_{\wf})\in\partial F_{\mathrm{D}}(f)$. Since $\partial
 F_{\mathrm{D}}(f)$ is a non-empty, closed convex set in $H^{-1}(\O)$,
 the canonical restriction $\partial F_{\mathrm{D}}^c(f)$ uniquely
 exists. By Corollary \ref{cor_subdifferential_D_restricted} we may
 write $\partial F_{\mathrm{D}}^c(f)=-(-\Dd)\div \bG$ with some $\bG\in
 L^{p/(p-1)}(\O;\R^d)$ satisfying
 $\bG|_{\O\backslash\overline{\O_0}}=\bu_{\wf}$. By convexity of
 $\partial F_{\mathrm{D}}(f)$ and minimality of $\|-\Dd \div
 \bG\|_{H^{-1}(\O)}$ we have that
\begin{equation}\label{eq_equation_minimizer}
\begin{split}
\lim_{\eps\searrow 0}&\frac{d}{d\eps}\|(1-\eps)(-\Dd)\div \bG+\eps
 (-\Dd)\div (\bg|\bu_{\wf})\|^2_{H^{-1}(\O)}\\
&=2\int_{\O_0}\<\nabla \div \bg(\bx)-\nabla \div \bG(\bx),\nabla \div
 \bG(\bx)\>_{\R^d}d\bx\ge 0.
\end{split}
\end{equation}

On the other hand, we can derive the equality that for any $\bpsi\in
 C_0^{\infty}(\O;\R^d)$
\begin{equation*}
\int_{\O_0}\<\nabla \div \bg(\bx)-\nabla \div \bG(\bx), \bpsi(\bx)\>_{\R^d}d\bx=\int_{\O_0}\<\bg(\bx)-\bG(\bx),\nabla \div \bpsi(\bx)\>_{\R^d}d\bx.
\end{equation*}
Then by the assumptions \eqref{item_smooth}, \eqref{item_E_L}
\begin{equation}\label{eq_equation_by_assumption}
\int_{\O_0}\<\nabla \div \bg(\bx)-\nabla \div\bG(\bx),\nabla \div
 \bg(\bx)\>_{\R^d}d\bx=0.
\end{equation}

Combining \eqref{eq_equation_minimizer} with
 \eqref{eq_equation_by_assumption} gives 
$$
\int_{\O_0}|\nabla \div\bg(\bx)-\nabla \div \bG(\bx)|^2d\bx\le 0,
$$
or $-\Dd \div (\bg|\bu_{\wf})=-\Dd\div \bG$.
\end{proof}

\subsection{A spherically symmetric surface}
Let us apply Lemma \ref{lem_search_canonical_restriction} to find the
canonical restriction $\partial F_{\mathrm{D}}^c(f)$ under assumptions
that both $\O_0$ and $\O$ are spherical domains and $\wf:\O\to \R$ is
spherically symmetric. More precisely we assume that
$$\O_0=\{\bx\in\R^d\ |\ |\bx|<r_0\},\  \O=\{\bx\in\R^d\ |\ |\bx|<r\}$$
with $0<r_0<r$ and $\wf(\bx):=h(|\bx|)$ with $h\in C^1([0,r])$
satisfying 
$$h(r)=0,\ h^{(1)}(s)=0\ (\forall s\in [0,r_0])\text{ and }h^{(1)}(s)<0\
(\forall s\in
(r_0,r)).$$
Here and below let the notation $u\in C^l([a,b])$
$(l\in \N\cup \{0\}, a<b)$ mean that $u\in C^l((a,b))$ and $u^{(k)}\in
C([a,b])$ $(k\in \{0,1,\cdots,l\})$. The corresponding $f$ $(\in
H^{-1}(\O))$ to this $\wf$ $(\in W_0^{1,p}(\O))$ is characterized by
$$
\<f,\phi\>=\int_{\O}\wf(\bx)\phi(\bx)d\bx,\ \forall \phi\in H_0^1(\O).
$$ 
To organize the calculation of the canonical restriction $\partial
F_{\mathrm{D}}^c(f)$ below, we define a function $H:[r_0,r]\to\R$ by
$$H(s):=-1+\mu |h^{(1)}(s)|^{p-2}h^{(1)}(s),\ \forall s\in [r_0,r].$$
\begin{theorem}\label{thm_canonical_restriction}
Assume that $H\in C^3([r_0,r])$,
\begin{align}
&H^{(1)}(r)+\frac{d-1}{r}H(r)=0,\label{eq_assumption_zero_divergence}\\
&H^{(1)}(r_0)\in
 [-9/r_0,0].\label{eq_assumption_interval}
\end{align}
Then for all $\phi\in H_0^1(\O)$
\begin{equation}\label{eq_canonical_restriction}
\begin{split}
\<\partial
F_{\mathrm{D}}^c(f),\phi\>
&=\frac{d(d+2)}{r_0^2}\left(H^{(1)}(r_0)+\frac{1}{r_0}\right)\int_{\O_0}\phi(\bx)d\bx\\
&\quad+\int_{\O\backslash\overline{\O_0}}\Bigg(H^{(3)}(|\bx|)+\frac{2(d-1)}{|\bx|}H^{(2)}(|\bx|)+\frac{(d-1)(d-3)}{|\bx|^2}H^{(1)}(|\bx|)\\
&\qquad\qquad\qquad -\frac{(d-1)(d-3)}{|\bx|^3}H(|\bx|)\Bigg)\phi(\bx)d\bx\\
&\quad+\left(H^{(2)}(r_0)-\frac{3}{r_0}H^{(1)}(r_0)-\frac{3}{r_0^2}\right)\int_{\partial \O_0}\phi(\bx)dS,
\end{split}
\end{equation}
where $dS$ denotes the surface measure.
\end{theorem}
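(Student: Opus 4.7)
My plan is to construct explicitly a vector field $\bg$ on $\O_0$ verifying the four hypotheses of Lemma \ref{lem_search_canonical_restriction} and then to unfold $-(-\Dd)\div(\bg|\bu_{\wf})$ into the right-hand side of \eqref{eq_canonical_restriction}. Because $h^{(1)}<0$ on $(r_0,r)$, one finds that $\bu_{\wf}(\bx)=H(|\bx|)\bx/|\bx|$ is itself radial, so it is natural to look for a radial ansatz $\bg(\bx)=G(|\bx|)\bx/|\bx|$ on $\O_0$. The standard formula $\div\bg=G^{(1)}(r)+(d-1)G(r)/r=:D(r)$ reduces hypothesis (ii), $\nabla\Delta\div\bg=\b0$, to $\Delta\div\bg\equiv c$ for some constant $c$, i.e.\ the radial ODE $D^{(2)}(r)+(d-1)D^{(1)}(r)/r=c$. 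Discarding the $r^{2-d}$ (or $\log r$) mode so that $\bg$ extends smoothly through the origin yields the two-parameter family
\begin{equation*}
D(r)=\frac{c\,r^2}{2d}+A,\qquad G(r)=\frac{c\,r^3}{2d(d+2)}+\frac{A\,r}{d},
\end{equation*}
and the corresponding $\bg(\bx)=[c|\bx|^2/(2d(d+2))+A/d]\bx$ is a polynomial vector field on $\R^d$; multiplying by a radial cutoff equal to $1$ on $\overline{\O_0}$ and vanishing near $\partial\O$ supplies the extension $\bpsi\in C_0^\infty(\O;\R^d)$ required by hypothesis (i).

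Next I would pin down $c$ and $A$ from hypothesis (iv). For $\div(\bg|\bu_{\wf})$ to lie in $H_0^1(\O)$ one needs (a) continuity of the normal component of $(\bg|\bu_{\wf})$ across $\partial\O_0$, i.e.\ $G(r_0)=H(r_0)=-1$; (b) continuity of the divergence itself across $\partial\O_0$, i.e.\ $D(r_0)=H^{(1)}(r_0)-(d-1)/r_0$; and (c) vanishing of the trace on $\partial\O$, which is exactly hypothesis \eqref{eq_assumption_zero_divergence}. Solving the $2\times 2$ linear system formed by (a) and (b) gives
\begin{equation*}
c=\frac{d(d+2)}{r_0^3}\bigl(1+r_0 H^{(1)}(r_0)\bigr),\qquad A=-\frac{d}{2}\left(\frac{3}{r_0}+H^{(1)}(r_0)\right).
\end{equation*}
Substituting back produces $G(sr_0)=\tfrac{1}{2}[s^3(1+\alpha)-s(3+\alpha)]$ with $\alpha:=r_0 H^{(1)}(r_0)$, and an elementary analysis of this cubic on $s\in[0,1]$ shows that $|G|\le 1$ holds precisely when $\alpha\in[-9,0]$, which is hypothesis \eqref{eq_assumption_interval}. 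Thus hypothesis (iii) also holds, Lemma \ref{lem_search_canonical_restriction} applies, and $\partial F_{\mathrm{D}}^c(f)=-(-\Dd)\div(\bg|\bu_{\wf})$.

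It remains to convert this identity into the pairing \eqref{eq_canonical_restriction}. Starting from $\langle (-\Dd)v,\phi\rangle=\int_\O\nabla v\cdot\nabla\phi\,d\bx$ with $v:=\div(\bg|\bu_{\wf})\in H_0^1(\O)$, I would split the integral along $\partial\O_0$ and integrate by parts piecewise. On $\O_0$, $\Delta v=c$, which is exactly the coefficient of the first term in \eqref{eq_canonical_restriction}. On $\O\setminus\overline{\O_0}$, $v(\bx)=K(|\bx|)$ with $K(r):=H^{(1)}(r)+(d-1)H(r)/r$, and a direct computation of the radial Laplacian of $K$ yields the second integrand in \eqref{eq_canonical_restriction}. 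The $\partial\O$ boundary term drops because $\phi\in H_0^1(\O)$, while the two interface contributions on $\partial\O_0$ combine, with oppositely oriented outward normals, into $[K^{(1)}(r_0^+)-D^{(1)}(r_0^-)]\int_{\partial\O_0}\phi\,dS$; substituting the explicit values of $c$ and $A$ and using $H(r_0)=-1$ collapses this coefficient to $H^{(2)}(r_0)-3H^{(1)}(r_0)/r_0-3/r_0^2$, reproducing the last term. The main technical obstacle will be the sharp polynomial estimate fixing the interval $[-9/r_0,0]$ in \eqref{eq_assumption_interval}, together with careful bookkeeping of signs and normal orientations in the interface integration by parts on $\partial\O_0$.
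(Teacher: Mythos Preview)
Your proposal is correct and follows essentially the same route as the paper's proof: a radial ansatz on $\O_0$, elimination of the singular modes for smoothness at the origin, determination of the two free constants from continuity of the normal component and of the divergence across $\partial\O_0$, verification that $|\bg|\le 1$ is equivalent to \eqref{eq_assumption_interval}, and a piecewise integration by parts to obtain \eqref{eq_canonical_restriction}. The only organizational difference is that the paper writes out the fourth-order Euler ODE for $\eta$ and reads off the solution $\eta(s)=C_1 s+C_2 s^3$, whereas you first integrate $\nabla\Delta\div\bg=\b0$ to $\Delta D=c$ and then recover $G$; the resulting two-parameter family and the subsequent computations coincide.
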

\begin{remark}
The surface integral over $\partial \O_0$ in
 \eqref{eq_canonical_restriction} corresponds to the appearance of delta
 functions in one dimensional case \cite[\mbox{Theorem 4.1}]{K}. The
 surface integral disappears and the canonical restriction can be
 identified with a function being constant on the facet $\O_0$
 if $H^{(2)}(r_0)-3/r_0 H^{(1)}(r_0)-3/r_0^2=0$. This remark was missed in the conclusion of \cite{K} and was properly
 taken into account in \cite[\mbox{Chapter 3}]{O} during its derivation
 of the free boundary value problem.
\end{remark}
\begin{proof}
First note that $\bu_{\wf}(\bx)=H(|\bx|)\bx/|\bx|$ $(\forall \bx\in\O\backslash
 \overline{\O_0})$ and by the assumption
 \eqref{eq_assumption_zero_divergence}
 \begin{equation}\label{eq_zero_boundary}
\div \bu_{\wf}(\bx)=0,\ \forall \bx\in \partial \O.
\end{equation}

Next let us find $\bg:\O_0\to \R^d$ satisfying \eqref{item_smooth},
 \eqref{item_E_L}, \eqref{item_conforming} of Lemma
 \ref{lem_search_canonical_restriction}. Postulate that
 $\bg(\bx)=\eta(|\bx|)\bx/|\bx|$ with a function
 $\eta:[0,r_0]\to\R$. Then we have that
\begin{equation*}
\begin{split}
\nabla \Delta \div
 \bg(\bx)=&\Big(|\bx|^4\eta^{(4)}(|\bx|)+2(d-1)|\bx|^3\eta^{(3)}(|\bx|)+(d-1)(d-5)|\bx|^2\eta^{(2)}(|\bx|)\\
&\quad -3(d-1)(d-3)|\bx|\eta^{(1)}(|\bx|)+3(d-1)(d-3)\eta(|\bx|)\Big)\frac{\bx}{|\bx|^5}.
\end{split}
\end{equation*}
The general solution to the ODE
\begin{equation*}
\begin{split}
&s^4\eta^{(4)}(s)+2(d-1)s^3\eta^{(3)}(s)+(d-1)(d-5)s^2\eta^{(2)}(s)\\
&\quad -3(d-1)(d-3)s\eta^{(1)}(s)+3(d-1)(d-3)\eta(s)=0\ (s>0)
\end{split}
\end{equation*}
is given by
$$
\eta(s)=C_1s+C_2s^3+C_3s^{-(d-1)}+C_4\left\{\begin{array}{ll}s \log s
				      &\text{if }d=2,\\
					     s^{-(d-3)}&\text{if }d\neq
					    2,\end{array}\right.\ \forall C_i\in\R\ (i=1,2,3,4).$$
Since we are looking for $\bg\in C^{\infty}(\O_0;\R^d)$, $C_3=C_4=0$. Therefore,$$
\bg(\bx)=(C_1|\bx|+C_2|\bx|^3)\frac{\bx}{|\bx|},\ \forall \bx\in\O_0.
$$
To determine $C_1,C_2$ we use the continuity conditions on
 $\partial\O_0$. Since $\div (\bg|\bu_{\wf})\in L^2(\O)$,
$\<\bg(\bx),\bx/|\bx|\>_{\R^d}=\<\bu_{\wf}(\bx), \bx/|\bx|\>_{\R^d}$
 $(\forall \bx\in \partial \O_0)$, or $\eta(r_0)=H(r_0)$. Coupling this with the fact $h^{(1)}(r_0)=0$ yields 
\begin{equation}\label{eq_div_continuity}
C_1r_0+C_2r_0^3=-1.
\end{equation}
Moreover, since $(\div \bg|\div \bu_{\wf})\in H_0^1(\O)$, $\div
 \bg(\bx)=\div \bu_{\wf}(\bx)$ $(\forall \bx\in \partial \O_0)$,
which implies that
 $\eta^{(1)}(r_0)+(d-1)\eta(r_0)/r_0=H^{(1)}(r_0)+(d-1)H(r_0)/r_0$, or
 by using the equality $\eta(r_0)=H(r_0)$,
\begin{equation}\label{eq_gradient_continuity}
C_1+3C_2r_0^2=H^{(1)}(r_0).
\end{equation}
By solving \eqref{eq_div_continuity}-\eqref{eq_gradient_continuity} we have
$$
\bg(\bx)=\left(\frac{1}{2r_0^2}\left(H^{(1)}(r_0)+\frac{1}{r_0}\right)|\bx|^3-\frac{1}{2}\left(H^{(1)}(r_0)+\frac{3}{r_0}\right)|\bx|\right)\frac{\bx}{|\bx|},
$$
which is seen to satisfy \eqref{item_smooth}, \eqref{item_E_L},
 \eqref{item_conforming} of Lemma \ref{lem_search_canonical_restriction}
 by its construction and \eqref{eq_zero_boundary}.

An elementary argument shows that this $\bg$ obeys \eqref{item_bdd}
 of Lemma \ref{lem_search_canonical_restriction} if and only if
 \eqref{eq_assumption_interval} holds. 

We have checked that all the requirements
 of Lemma \ref{lem_search_canonical_restriction} are fulfilled, and thus
 obtain $\partial F_{\mathrm{D}}^c(f)=-(-\Dd)\div(\bg|\bu_{\wf})$. Then
 by direct calculation we can deduce \eqref{eq_canonical_restriction}.
\end{proof}
\begin{example}
Assume that $r=2r_0$, $p=2$ and $\mu=1$. In this setting let us give a
 surface $\wf$ realizing all the assumptions of Theorem
 \ref{thm_canonical_restriction} plus
\begin{equation}\label{eq_assumption_no_delta}
H^{(2)}(r_0)-\frac{3}{r_0}H^{(1)}(r_0)-\frac{3}{r_0^2}=0,
\end{equation}
so not having the surface integral over $\partial\O_0$ in
 \eqref{eq_canonical_restriction}. 
Note that now the condition \eqref{eq_condition_D} holds and 
$$
H(s)=-1+h^{(1)}(s),\ \forall s\in [r_0,2r_0].
$$
We can summarize the assumptions of Theorem
 \ref{thm_canonical_restriction} and \eqref{eq_assumption_no_delta} in
 terms of $h$ as follows.
\begin{equation*}
\begin{split}
&h\in C^1([0,2r_0])\cap C^4([r_0,2r_0]),\\
&h(2r_0)=0,\\
&h^{(1)}(s)=0,\ \forall s\in [0,r_0],\\
&h^{(1)}(s)<0,\ \forall s\in (r_0,2r_0),\\
&h^{(2)}(2r_0)+\frac{d-1}{2r_0}(-1+h^{(1)}(2r_0))=0,\\
&h^{(2)}(r_0)\in [-9/r_0,0],\\
&h^{(3)}(r_0)-\frac{3}{r_0}h^{(2)}(r_0)-\frac{3}{r_0^2}=0.
\end{split}
\end{equation*}

Define $h:[0,2r_0]\to\R$ by
\begin{equation*}
\begin{split}
&h(s):=\\
&\left\{\begin{array}{ll}\displaystyle
\int_{2r_0}^{r_0}\left(-\frac{3}{5r_0^3}(t-r_0)(t-2r_0)^2+\frac{d-1}{2r_0^4}(t-r_0)^3(t-2r_0)\right)dt&
 \text{if }s\in[0,r_0],\\
\displaystyle\int_{2r_0}^{s}\left(-\frac{3}{5r_0^3}(t-r_0)(t-2r_0)^2+\frac{d-1}{2r_0^4}(t-r_0)^3(t-2r_0)\right)dt&
 \text{if }s\in(r_0,2r_0].
\end{array}
\right.
\end{split}
\end{equation*}
Then, $h$ obeys all the constraints listed above. With this $h$, define
 $\wf(\bx):=h(|\bx|)$ $(\forall \bx\in \O)$. By Theorem
 \ref{thm_canonical_restriction}, $\partial F_{\mathrm{D}}^c(f)\in
 L^{\infty}(\O)$ and 
\begin{equation*}
\begin{split}
\partial F_{\mathrm{D}}^c(f)(\bx)
&=\frac{2d(d+2)}{5r_0^3}1_{\bx\in\O_0}\\
&\quad
 +\Big(h^{(4)}(|\bx|)+\frac{2(d-1)}{|\bx|}h^{(3)}(|\bx|)+\frac{(d-1)(d-3)}{|\bx|^2}h^{(2)}(|\bx|)\\
&\qquad\quad
 -\frac{(d-1)(d-3)}{|\bx|^3}h^{(1)}(|\bx|)+\frac{(d-1)(d-3)}{|\bx|^3}\Big)1_{\bx\in \O\backslash\overline{\O_0}}.
\end{split}
\end{equation*}
\end{example}

\section*{Acknowledgments}
The author wishes to thank Professor Yoshikazu Giga for explaining the
articles \cite{GG}, \cite{GK} as well as recommending writing Section
\ref{sec_canonical_restriction} of this paper.

\end{document}